\documentclass[a4paper,12pt]{article} 
\title{A N\'eron model of the universal jacobian}
\usepackage{amsmath, amssymb, mathrsfs, amsthm, geometry}
\usepackage{pgf,tikz}
\usetikzlibrary{arrows}
\usepackage{hyperref}
\usepackage[all]{xy}
\usepackage[OT2, T1]{fontenc}

\usepackage{pgf,tikz}
\usetikzlibrary{arrows}

\usepackage{enumitem}

\usepackage{cleveref}

\newcommand{\on}[1]{\operatorname{#1}}
\newcommand{\bb}[1]{{\mathbb{#1}}}
\newcommand{\cl}[1]{{\mathscr{#1}}}
\newcommand{\ca}[1]{{\mathcal{#1}}}





\newcommand{\ra}{\rightarrow}

\newcommand{\hra}{\hookrightarrow}
\newcommand{\sub}{\subseteq}

\theoremstyle{definition}
\newtheorem{definition}{Definition}[section]

\newtheorem{setup}[definition]{Setup}

\theoremstyle{plain}
\newtheorem{proposition}[definition]{Proposition}
\newtheorem{lemma}[definition]{Lemma}
\newtheorem{theorem}[definition]{Theorem}
\newtheorem{corollary}[definition]{Corollary}

\theoremstyle{remark}

\newtheorem{remark}[definition]{Remark}

\newtheorem{example}[definition]{Example}

\def\defeq{\stackrel{\text{\tiny def}}{=}}

\renewcommand{\phi}{\varphi}

\author{David Holmes}
\date{\today}




\newcounter{nootje}
\setcounter{nootje}{1}
\renewcommand\check[1]{[*\thenootje]\marginpar{\tiny\begin{minipage}
{20mm}\begin{flushleft}\thenootje : 
#1\end{flushleft}\end{minipage}}\addtocounter{nootje}{1}}

\newcommand{\beq}{\begin{equation}}
\newcommand{\eeq}{\end{equation}}
\newcommand{\beqs}{\begin{equation*}}
\newcommand{\eeqs}{\end{equation*}}

\begin{document}
\newcommand{\M}[1]{\ca{M}_{#1}}
\newcommand{\Mbar}[1]{\overline{\ca{M}}_{#1}}
\newcommand{\Mtil}[1]{\widetilde{\ca{M}}_{#1}}
\newcommand{\MN}[1]{{\ca{N}}_{#1}}

\maketitle
\begin{abstract} 
The jacobian of the universal curve over $\M{g,n}$ is an abelian scheme over $\M{g,n}$. Our main result is the construction of an algebraic space $\beta\colon \Mtil{g,n} \ra \Mbar{g,n}$ over which this jacobian admits a N\'eron model, and such that $\beta$ is universal with respect to this property. We prove certain basic properties, for example that $\beta$ is separated, locally of finite presentation, and satisfies a certain restricted form of the valuative criterion for properness. In general, $\beta$ is not quasi-compact. We relate our construction to Caporaso's balanced Picard stack $\ca{P}_{d,g}$. 
%
%
%
%
%
%
%
\end{abstract}

\tableofcontents

\newcommand{\et}{^{\on{et}}}

\section{Introduction}
We write $\M{g,n}$ for the Deligne-Mumford stack of smooth proper curves of genus $g$ with $n$ disjoint ordered marked sections, and $\Mbar{g,n}$ for its Deligne-Knudsen-Mumford compactification. The universal curve over $\M{g,n}$ is smooth and proper, and hence has a jacobian, an abelian scheme, which we shall denote $J_{g,n}/\M{g,n}$. Numerous authors have investigated (partial) compactifications of $J_{g,n}$ over $\Mbar{g,n}$, see for example \cite{Caporaso2008Neron-models-an}, \cite{Chiodo2015Neron-models-of}, \cite{Esteves2001Compactifying-t}, \cite{Frenkel2009Gromov-Witten-g}, \cite{Melo2009Compactified-Pi}. 

N\'eron models \cite{Neron1964Modeles-minimau} are the best partial compactifications for families of abelian varieties over 1-dimensional bases. For higher-dimensional families the definition of a N\'eron model still makes sense (see \cref{def:nm}), but in general they do not exist \cite{Holmes2014Neron-models-an}. The main result of this paper is the construction of a certain universal base-change $\Mtil{g,n} \ra \Mbar{g,n}$ after which a N\'eron model \emph{does} exist. 

More precisely, we construct a map $\beta\colon \Mtil{g,n} \ra \Mbar{g,n}$ which is an isomorphism over $\M{g,n}$ and such that
\begin{enumerate}
\item[(1)]
 the universal jacobian $J_{g,n}$ admits a N\'eron model over $\Mtil{g,n}$;
 \item[(2)] $\Mtil{g,n}\ra \Mbar{g,n}$ is universal with respect to property (1) --- see \cref{def:UNMAM} for a precise statement.
\end{enumerate}
We show that $\beta$ is separated, locally of finite type, relatively representably by algebraic spaces, and that it satisfies a restricted form of the valuative criterion for properness (see \cref{theorem:intro_main}). In general, $\beta$ is not quasi-compact. 

The map $\beta$ is an isomorphism exactly over the locus of `treelike curves' (curves whose dual graph has the property that every circuit in the graph has only one edge). This locus contains the curves of compact type, and also the locus denoted $\Mbar{g,n}^\circ$ in the work of Grushevsky and Zakharov \cite{Grushevsky2012The-zero-sectio} (and also used by Dudin \cite{Dudin2015Compactified-un}) for the study of the double ramification cycle. Indeed, a N\'eron model tautologically admits an abel-jacobi map, so it is perhaps unsurprising that the double ramification cycle is more accessible on this locus. 


\subsection{Statements of main results}

%
%
%
%
%
%
%
%
%
%
%
%
%
%
%

We begin with the definition of a N\'eron model. This is the natural generalisation of \cite[definition 1.1]{Holmes2014Neron-models-an} to the situation where the base is a stack. 
\begin{definition}\label{def:nm}
Let $S$ be a reduced algebraic stack and $U\hra S$ a dense open substack. Let $A/U$ be an abelian scheme. A \emph{N\'eron model for $A$ over $S$} is a smooth separated group algebraic space $N/S$ together with an isomorphism $A \ra N\times_S U$ satisfying the following universal property:
\emph{let $T \ra S$ be a smooth morphism representable by algebraic spaces, and $f\colon T_U \ra A = N_U$ any $U$-morphism. Then there exists a unique $S$-morphism $F\colon T \ra N$ such that $F|_{T_U} = f$.} 
\end{definition}

By the results of \cite{Holmes2014Neron-models-an}, it is clear that the universal jacobian over $\Mbar{g,n}$ does not in general admit a N\'eron model - more precisely, a N\'eron model will exist if and only if $g=n=1$ or $g=0$. Further, the same reference shows that blowing up or altering $\Mbar{g,n}$ will not affect this non-existence. On the other hand, we deduce from \cite[corollary 1.3]{Holmes2014Neron-models-an} that there is an open substack $U$ of $\Mbar{g,n}$ whose complement has codimension 2 and over which a (finite type) N\'eron model exists, and we also know that if $T$ is any trait\footnote{The spectrum of a discrete valuation ring. } and $f\colon T \ra \Mbar{g,n}$ is any map sending the generic point of $T$ to a point in $\M{g,n}$, then $f^*J_{g,n}$ admits a (finite type) N\'eron model. 

Motivated by this, we ask for the `best' morphism to $\Mbar{g,n}$ such that the pullback of the universal Jacobian $J_{g,n}$ admits a N\'eron model. To make this precise, we use a universal property:

\begin{definition}\label{def:UNMAM}
A \emph{N\'eron-model-admitting morphism} is a morphism $f\colon T \ra \Mbar{g,n}$ of algebraic stacks such that:
\begin{enumerate}
\item $T$ is regular;
\item $f^{-1}\M{g,n}$ is dense in $T$;
\item $f^*J_{g,n}$ admits a N\'eron model over $T$. 
\end{enumerate}

A \emph{universal N\'eron-model-admitting morphism} is a terminal object in the 2-category of N\'eron-model-admitting morphisms. 
\end{definition}

Our main result is the following:
\begin{theorem}[\cref{thm:main}]\label{theorem:intro_main}
A universal N\'eron-model-admitting morphism $\beta\colon \Mtil{g,n} \ra \Mbar{g,n}$ exists. 
\end{theorem}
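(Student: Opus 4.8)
The plan is to construct $\Mtil{g,n}$ explicitly and then verify the universal property. First I would recall the local criterion for existence of N\'eron models developed in \cite{Holmes2014Neron-models-an}: for a family of abelian varieties arising as the jacobian of a nodal curve degeneration, the obstruction to the existence of a N\'eron model is measured by a combinatorial condition on the dual graphs of the fibres, essentially the requirement that certain ``alignment'' relations hold among the components of the boundary divisor and the thickness parameters of the nodes. So I would first make this obstruction precise as a condition on (strictly henselian) local rings of $\Mbar{g,n}$ equipped with the data of the universal nodal curve, phrasing it as: a map $T\to\Mbar{g,n}$ from a regular stack with dense preimage of $\M{g,n}$ admits a N\'eron model of $f^*J_{g,n}$ if and only if $T$ is ``aligned'' in the appropriate sense.

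Next I would build $\beta\colon\Mtil{g,n}\to\Mbar{g,n}$ as a (typically non-quasi-compact) sequence of blowups along boundary strata: étale-locally on $\Mbar{g,n}$, the boundary is a normal crossings divisor $\bigcup_{e} Z_e$ indexed by the edges of the dual graph, and one repeatedly blows up intersections $Z_{e}\cap Z_{e'}$ (and higher intersections) corresponding to circuits of length $\geq 2$ in the dual graph, in such a way that the resulting space becomes aligned. The key point is that these local blowup constructions are canonical enough — they depend only on the combinatorics of the dual graph and its specialisations — to glue to a map of algebraic stacks; here one uses that $\Mbar{g,n}$ has an étale cover by schemes on which the universal curve and its dual-graph stratification are explicit, and that the modification commutes with the étale transition maps. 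Since circuits can be arbitrarily long and the necessary subdivision is infinite in general, the result is only locally of finite type, not quasi-compact.

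Then I would verify the three defining properties. That $\Mtil{g,n}$ is regular and that $\beta^{-1}\M{g,n}$ is dense and $\beta$ is an isomorphism over it is immediate from the blowup construction, since we only blow up inside the boundary. That $f^*J_{g,n}$ admits a N\'eron model over $\Mtil{g,n}$ follows because, by design, $\Mtil{g,n}$ satisfies the alignment criterion, so the N\'eron model can be constructed — concretely as the maximal separated quotient of (the smooth locus of) a suitable compactified Picard space, or by gluing the finite-type N\'eron models that exist over the codimension-$\leq 1$ part and over traits, using alignment to check the patching is separated. For the universal property I would take any N\'eron-model-admitting morphism $g\colon T\to\Mbar{g,n}$ and show it factors uniquely through $\beta$: by the local criterion, $T$ being N\'eron-model-admitting forces $g$ to be aligned, and the factorisation through a blowup of an aligned modification is then the standard universal property of blowing up (a regular map to the base whose pullback of the blown-up ideal is invertible factors uniquely through the blowup), applied stratum by stratum and assembled using the regularity of $T$. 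Uniqueness of the factorisation, hence terminality in the 2-category, reduces to the corresponding statement for blowups and the separatedness of $\beta$.

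The main obstacle I expect is the gluing step: showing that the étale-local blowup recipes, which are individually clear, patch to a genuine morphism of algebraic \emph{stacks} $\Mtil{g,n}\to\Mbar{g,n}$ rather than merely to something defined on an atlas. This requires a genuinely canonical description of which ideals to blow up — intrinsic to the dual graph and compatible with automorphisms of the graph and with edge-contractions corresponding to specialisation of strata — so that the construction descends. Making ``aligned'' into a property stable under the relevant base changes, and checking that the infinite blowup tower is still an algebraic space locally of finite presentation with the claimed separatedness and restricted properness, is where most of the real work lies; the N\'eron model existence and the universal property are comparatively formal once the correct modification has been pinned down.
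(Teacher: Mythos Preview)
Your overall strategy matches the paper's: reduce existence of the N\'eron model to an alignment criterion (this is \cite{Holmes2014Neron-models-an}), construct a universal object for alignment, then check that the universal aligning morphism is the universal N\'eron-model-admitting morphism. Where you diverge is in the \emph{construction} of the modification and in the proof of its universal property.

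The paper does not build $\Mtil{g,n}$ as an iterated blowup. Instead, \'etale-locally it writes down explicit affine charts $S_M=\on{Spec}R_M$ indexed by ``thickness functions'' $M$ (assignments of nonnegative integers to edges, coprime on each circuit-connected piece), where $R_M$ is obtained by adjoining an ``aligning element'' $a$ and units $u_e$ with $\ell(e)=a^{M(e)}u_e$. These charts are glued along explicit opens $S_{M,N}$, and separatedness is checked directly via the valuative criterion. The universal property is then immediate: any $M$-aligning map factors through $S_M$ by construction, and any aligning map is \'etale-locally $M$-aligning for some $M$. Regularity and the normal-crossings structure of the pulled-back boundary are proven chart-by-chart by an explicit computation (\cref{lemma:basic_regularity}). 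For the N\'eron model itself, the paper does not glue N\'eron models from codimension~$\le 1$ or from traits; it first \emph{resolves the singularities of the pulled-back curve} over $\tilde S$ (\cref{lemma:resolving_singularities}) to obtain a regular aligned nodal model, and then invokes \cite[theorem 1.2]{Holmes2014Neron-models-an} directly.

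Your blowup picture is morally correct (and matches the worked example in \cref{sec:example}), but two points need tightening. First, the universal property of a single blowup says that maps making an ideal invertible factor through; the alignment condition is stronger---it says labels are \emph{powers of a common element}---so one pass through ``universal property of blowing up, stratum by stratum'' is not enough, and you would need to argue carefully that the infinite tower captures exactly the aligning maps. The thickness-function charts sidestep this by building in the exponents $M(e)$ from the start. Second, the non-quasi-compactness does not come from circuits being long: already a $2$-gon yields infinitely many charts, one for each coprime pair $(m_1,m_2)$. The infinitude is in the set of thickness functions, not in the combinatorics of a single graph.
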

The morphism $\beta$ is separated, locally of finite presentation, and relatively representable by algebraic spaces. Moreover, $\beta$ satisfies the valuative criterion for properness for morphisms from traits to $\Mbar{g,n}$ which map the generic point of the trait to a point in $\M{g,n}$. The N\'eron model of $J_{g,n}$ over $\Mtil{g,n}$ is of finite type over $\Mtil{g,n}$, and its fibrewise-connected-component-of-identity is semiabelian. The stack $\Mtil{g,h}$ is smooth over $\bb{Z}$. 

Note that $\beta$ is in general not quasi-compact (in fact, it is quasi-compact if and only if it is an isomorphism, if and only if $g=n=1$ or $g=0$). The statement about the valuative criterion for properness for certain traits can be thought of as saying `suitable test curves in $\M{g,n}$ extend to $\Mtil{g,n}$'. One might reasonably ask whether the same result holds true if $\Mbar{g,n+1}/\Mbar{g,n}$ is replaced by some other family of nodal or stable curves. In general the answer seems to be no; we make essential use of the precise structure of the non-smooth locus of $\Mbar{g,n+1}/\Mbar{g,n}$. The most general situation to which our results apply is described in \cref{thm:NM_over_univ_aligning_over_scheme}.

\subsection{Comparison to Caporaso's balanced Picard stack}
Let $g \ge 3$, and let $d$ be an integer such that $\on{gcd}(d-g+1,2g-2)=1$. In \cite{Caporaso2008Neron-models-an}, Caporaso constructs a `balanced Picard stack' $\ca{P}_{d,g} \ra \Mbar{g}$, and shows that it acts as a `universal N\'eron model' for smooth test curves $f\colon T \ra \Mbar{g}$ such that the corresponding stable curve $X/T$ is regular. More precisely, for any such test curve, the scheme $f^*\ca{P}_{d,g}$ is canonically isomorphic to the N\'eron model of the degree-$d$ jacobian of the generic fibre of $X$. Note that this canonical isomorphism is not a morphism of group spaces or torsors. 

Our N\'eron model $N_{g}$ of the universal jacobian $J_{g}$ satisfies a similar property; it follows easily from the definition that for any test curve $f\colon T \ra \Mbar{g}$ such that the given curve $X/T$ is regular, the map $f$ factors (uniquely up to unique isomorphism) via a map $g\colon T \ra \Mtil{g}$ and moreover we have that $g^*N_{g}$ is canonically isomorphic \emph{as a group space} to the N\'eron model of the jacobian of the generic fibre of $X \ra T$. Moreover, we will see in \cref{sec:caporaso} that we can also treat the degree-$d$ jacobian in a similar way. 

These properties may seem rather similar, but there are important differences:
\begin{enumerate}
\item
 Caporaso's $\ca{P}_{d,g}$ is a scheme over $\Mbar{g}$, whereas our N\'eron model $N_{g}$ can only exist over $\Mtil{g}$;
 \item
  The scheme $\ca{P}_{d,g}$ can never admit a group or torsor structure compatible with that on the jacobian, whereas our N\'eron model $N_{g}$ is automatically a group space; 
\end{enumerate}

The scheme $\ca{P}_{d,g}$ has a good universal property for test curves $f\colon T \ra \Mbar{g}$ such that the pullback of the universal curve to $T$ is regular - roughly speaking, the test curve is `sufficiently transverse to the boundary'. In contrast, our construction has a good universal property for all morphisms from test curves mapping the generic point into $\M{g}$, and even for test object of higher dimensions. Indeed, test curves $f\colon T \ra \Mbar{g}$ such that the pullback of the universal curve to $T$ is regular factor through a quasi-compact open subscheme $\Mtil{g}^{\le 1}$ of $\Mtil{g}$. A precise description of $\Mtil{g}^{\le 1}$ is given in \cref{sec:caporaso}, where we also show that $\ca{P}_{d,g}$ \emph{does} admit a torsor structure after pullback to $\Mtil{g}^{\le 1}$, and give a more precise comparison to our N\'eron model. 

In summary, it seems reasonable to say that our N\'eron model $N_{g}$ over $\Mtil{g}$ has a stronger universal property than Caporaso's $\ca{P}_{d,g}$, but this comes at the price of replacing $\Mbar{g}$ by $\Mtil{g}$.

\subsection{Outline of the paper}

In \cref{sec:definition_of_balance} we give the definition of an aligned curve - this is very slightly different from the definition given in \cite{Holmes2014Neron-models-an} but is equivalent in the situations we care about, see \cref{remark:reg_irreg_aligned}. Recall from \cite[theorem 1.2]{Holmes2014Neron-models-an} that there is a close connection between a curve being aligned and its jacobian admitting a N\'eron model. 

In \cref{sec:univ_aligning_for_curves} we define `universal aligning morphisms'; the proof that they exist is in \cref{sec:scheme_representing_relations}. The sections in-between contain various intermediate results we need for the proof. In \cref{sec:regularity_for_correlation} we show that, if the singularities of $C/S$ are `mild enough', then the universal aligning scheme is in fact regular (in particular, this holds in the universal case). We also show that the construction of the universal aligning morphism can be made slightly more explicit in this situation. 

In \cref{sec:resolving_sings_of_pullback} we will show that, again if $C/S$ has mild singularities, it is possible to resolve the singularities of the pulled-back curve over the universal aligning scheme. In \cref{sec:NM_over_univ_aligning} we will apply this so show that the universal aligning morphism is in fact a universal N\'eron model admitting morphism.

\Cref{sec:example} contains a worked example, and in \cref{sec:caporaso} we relate our results to some constructions of Caporaso from \cite{Caporaso2008Neron-models-an}.

In this paper, `algebraic stack' means `algebraic stack in the sense of \cite[\href{http://stacks.math.columbia.edu/tag/026O}{Tag 026O}]{stacks-project}'. 

\subsection{Acknowledgements}
The author would like to thank Omid Amini, Owen Biesel, Jos\'e Burgos Gil, Alessandro Chiodo, Bas Edixhoven, Eduardo Esteves, Robin de Jong, Bashar Dudin, Margarida Melo, Filippo Viviani and Morihiko Saito for helpful comments and discussions.


\section{Definition and basic properties of aligned curves}\label{sec:definition_of_balance}
\newcommand{\mono}{\textbf{Mon}_0}

In this section we briefly recall the definitions from \cite{Holmes2014Neron-models-an}, where more precise statements can be found. For us `monoid' means `commutative monoid with zero', and a graph has finitely many edges and vertices, and is allowed loops and multiple edges between vertices. We are interested in graphs whose edges are labelled by elements of a monoid.  

%
%
%

\begin{definition}
Let $L$ be a monoid, and $(H, \ell)$ a 2-vertex-connected\footnote{i.e. connected and remains connected after removing any one vertex. } graph labelled by $L$ (so $\ell$ is a map from the set of edges of $H$ to $L$). We say $(H, \ell)$ is \emph{aligned} if there exists $l \in L$ and positive integers $n(e)$ for each edge $e$ such that for all edges $e$ we have
$$\ell(e) = l^{n(e)}. $$

Let $L$ be a monoid, and $(G, \ell)$ a graph labelled by $L$. We say $G$ is \emph{aligned} if every 2-vertex connected subgraph of $G$ is aligned. 
\end{definition}

Now we turn to nodal curves, by which we mean proper, flat curves whose geometric fibres are connected and have at worst ordinary double point singularities. These were called `semistable curves' in \cite{Holmes2014Neron-models-an}. We recall from \cite[propositions 2.5 and 2.9]{Holmes2014Neron-models-an} a result on the local structure of such curves:

%
%
\begin{proposition}\label{local_rings_on_nodal}
Let $S$ be a locally noetherian scheme, $C/S$ a nodal curve, $s$ a geometric point of $S$, and $c$ a geometric point of $C$ lying over $s$. Then there exists an element $\alpha$ in the maximal ideal of the \'etale local ring $ \ca{O}\et_{S,s}$ and an isomorphism of complete local rings
$$ \frac{ \widehat{{\ca{O}}\et_{S,s}}[[x,y]]}{(xy-\alpha)} \ra \widehat{\ca{O}\et_{C,c}} . $$
The element $\alpha$ is not in general unique, but, the ideal $\alpha \ca{O}\et_{S,s} \triangleleft \ca{O}\et_{S,s}$ is unique. We call it the \emph{singular ideal of $c$}. If $C/S$ is smooth over a scheme-theoretically-dense open of $S$ then the singular ideal is never a zero-divisor. 
\end{proposition}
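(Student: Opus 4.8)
The plan is to establish the three assertions in turn, writing $R=\ca{O}\et_{S,s}$ and $A=\ca{O}\et_{C,c}$ --- (strict) henselian noetherian local rings, with $R\ra A$ the map induced by $C\ra S$ --- and reducing immediately to the case that $c$ is a singular point of the fibre $C_s$, since at a smooth point one takes the singular ideal to be $R$ itself and there is nothing to prove. For the \emph{existence of the standard form} I would recall the classical structure theory of nodal curves. Since $C_s$ has an ordinary double point at $c$ by hypothesis, $\widehat{\ca{O}_{C_s,c}}\cong\kappa(s)[[x,y]]/(xy)$; lifting $x,y$ to $A$, which is $R$-flat of finite type with this closed fibre, the local criterion of flatness produces a surjection $\widehat{R}[[x,y]]\ra\widehat{A}$ whose kernel is principal, generated by some $f\equiv xy\pmod{\mathfrak{m}_R}$ --- flatness being what forces the single relation $xy$ of the closed fibre to lift to a single relation. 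A formal change of the variables $x,y$ then brings $f$ to the shape $u\cdot(xy-\alpha)$ with $u$ a unit and $\alpha\in\mathfrak{m}_R$, giving the asserted isomorphism after completion. That $\alpha$ may in fact be chosen in $R$ itself, and not only in $\widehat{R}$, I would deduce either from Artin approximation or from the sharper fact that a nodal curve is \'etale-locally (not merely formally locally) a standard node; a detailed account is given in \cite[proposition 2.5]{Holmes2014Neron-models-an}.

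For the \emph{uniqueness} of the singular ideal, the key point I would make is that $\alpha R$ is intrinsic to $A$ as an $R$-algebra. In an algebraic chart $A'=R[x,y]/(xy-\alpha)$ --- so that $A$ is the henselisation of $A'$ at $(\mathfrak{m}_R,x,y)$ --- the conormal sequence exhibits $\Omega^1_{A'/R}$ as the cokernel of the map $A'\ra(A')^2$ sending $1$ to the class of $(y,x)$, whence $\on{Fitt}_1(\Omega^1_{A'/R})=(x,y)A'$ and $A'/(x,y)A'=R/\alpha R$. Since Fitting ideals commute with base change and $\Omega^1$ with the ind-\'etale localisation $A'\ra A$, the same computation holds after henselising, so that
\[
\alpha R=\ker\!\bigl(R\ra A/\on{Fitt}_1(\Omega^1_{A/R})\bigr);
\]
the right-hand side depends only on $R\ra A$, and hence is independent of the choices of $\alpha$ and of chart. (Equivalently one may run this over $\widehat{R}$ and $\widehat{A}$ and intersect the resulting ideal of $\widehat{R}$ back into $R$.)

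For the \emph{non-zero-divisor} property, suppose $C\ra S$ is smooth over a scheme-theoretically dense open $U\sub S$. After replacing $S$ by an \'etale neighbourhood of $s$ --- which leaves $\ca{O}\et_{S,s}$ unchanged and $U$ still scheme-theoretically dense --- I may assume $\alpha\in\Gamma(S,\ca{O}_S)$. For $S$ locally noetherian, $U$ being scheme-theoretically dense means precisely that $U$ contains every associated point of $S$. So if $\eta\in\on{Ass}(S)$ specialises to $s$, with corresponding prime $\mathfrak{p}\triangleleft\ca{O}_{S,s}$, then $\eta\in U$, hence $C_{\kappa(\eta)}$ is smooth; but in the standard form this fibre is, near $c$, $\on{Spec}\kappa(\eta)[x,y]/(xy-\overline{\alpha})$, which is smooth over $\kappa(\eta)$ exactly when $\overline{\alpha}\neq 0$, i.e.\ when $\alpha\notin\mathfrak{p}$. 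Thus $\alpha$ lies in no associated prime of $\ca{O}_{S,s}$ and is therefore a non-zero-divisor there, and hence, by flatness of $\ca{O}_{S,s}\ra\ca{O}\et_{S,s}$, a non-zero-divisor in $\ca{O}\et_{S,s}$ --- which is the ring in which the singular ideal lives.

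The step I expect to be the main obstacle is the first one: pinning down the \'etale-local normal form $xy=\alpha$ with $\alpha$ already in the henselian local ring. Once that is granted, the uniqueness is the one-line Fitting-ideal identity above, and the non-zero-divisor statement follows immediately from the interpretation of scheme-theoretic density via associated points.
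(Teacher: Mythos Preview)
The paper does not actually prove this proposition: it is explicitly recalled from \cite[propositions 2.5 and 2.9]{Holmes2014Neron-models-an}, as stated in the sentence introducing it, so there is no in-paper argument against which to compare yours. Your sketch is correct and is essentially what one expects to find behind that citation; you even invoke the same reference for the hardest step (the \'etale-local normal form $xy=\alpha$ with $\alpha$ already in the henselisation rather than only in the completion).

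A few remarks on the details. Your reduction to the case where $c$ is singular in its fibre is not merely a convenience: the isomorphism as stated, with $\alpha\in\mathfrak{m}_R$, simply cannot hold at a smooth point, since the closed fibre of $\widehat{R}[[x,y]]/(xy-\alpha)$ is then $k[[x,y]]/(xy)$, which is singular. The proposition is implicitly about non-smooth points (this is how it is used in the paper, where the singular ideal labels edges of the dual graph). Your Fitting-ideal argument for uniqueness is clean and correct: with the presentation $A'\xrightarrow{(y,x)}(A')^2\to\Omega^1_{A'/R}\to 0$ one has $\on{Fitt}_1(\Omega^1_{A'/R})=(x,y)$ and $A'/(x,y)=R/\alpha R$, and both Fitting ideals and $\Omega^1$ commute with the ind-\'etale localisation. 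The associated-points argument for the non-zero-divisor claim is also correct; the only point worth making explicit is that scheme-theoretic density is preserved under \'etale (indeed flat) base change because associated primes of a flat extension lie over associated primes of the base, so your passage to an \'etale neighbourhood is legitimate.
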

%


\begin{definition} \label{def:balance}
Let $S$ be a locally noetherian scheme and $C/S$ a nodal curve. Let $s \in S$ be a geometric point, and write $\Gamma$ for the dual graph\footnote{ Defined as in \cite[10.3.17]{Liu2002Algebraic-geome}. } of the fibre $C_s$.  Let $\on{Prin}(\ca{O}\et_{S,s})$ be the monoid of principal ideals of $\ca{O}\et_{S,s}$, so $\on{Prin}(\ca{O}\et_{S,s}) = \ca{O}\et_{S,s}/(\ca{O}\et_{S,s})^\times$. We label $\Gamma$ by elements of $\on{Prin}(\ca{O}\et_{S,s})$ by assigning to an edge $e \in \Gamma$ the singular ideal $l \in \on{Prin}(\ca{O}\et_{S,s})$ of the singular point of $C_s$ associated to $e$. 

We say \emph{$C/S$ is aligned at $s$} if and only if this labelled graph is aligned. We say \emph{$C/S$ is aligned} if it is aligned at $s$ for every geometric point $s \in S$. 
\end{definition}

\begin{remark}\label{remark:reg_irreg_aligned}
In \cite{Holmes2014Neron-models-an} we used a slightly different definition of alignment, namely we said $(H, \ell)$ was aligned if for all pairs of edges $e$, $e'$ there exist positive integers $n$, $n'$ such that $\ell(e)^{n} = \ell(e')^{n'}$. When we want to distinguish between these notions we will call the version from \cite{Holmes2014Neron-models-an} `irregularly aligned' and the new version in this paper `regularly aligned'. If $C/S$ is regularly aligned then it is irregularly aligned, and if $S$ has factorial \'etale local rings (for example if $S$ is regular) then the converse holds. In this paper, we will construct a universal regularly-aligning morphism, and a N\'eron model over it. A universal irregularly-aligning morphism also exists, but the universal object is not in general regular, and so we cannot construct a N\'eron model over it (probably one does not exist). 
\end{remark}

\begin{remark}\label{remark:fppf_descent}
The notion of alignment is fppf-local on the target, i.e. it is preserved under flat base-change and satisfies fppf descent. We sketch a proof of this fact: let $R \ra R'$ be a faithfully flat ring map, and let $r_1$, $r_2 \in R$ and $a$, $u_1$, $u_2 \in R'$ with $u_i$ units and $r_i = u_ia^{n_i}$ for some $n_i \in \bb{Z}_{>0}$. Suppose the map $$R \ra R[t, u_1^{\pm1}, u_2^{\pm 1}]/(r_1 - u_1t^{n_1}, r_2 - u_2t^{n_2})$$ becomes an isomorphism after base-change to $R'$, then by faithful flatness it was an isomorphism. 

\end{remark}

\begin{definition}
Let $S$ be a locally noetherian algebraic stack, and $C/S$ a nodal curve. Let $S' \ra S$ be a smooth cover by a scheme. We say that $C/S$ is aligned if and only if $C\times_S S' \ra S'$ is aligned. This makes sense by \cref{remark:fppf_descent}. 
\end{definition}

\section{Universal aligning morphisms}\label{sec:univ_aligning_for_curves}

\begin{definition}\label{def:univ_reg_aligning_for_curves}
Let $S$ be a locally noetherian and reduced algebraic stack, and $U \hra S$ a dense open substack. Let $C/S$ be a nodal curve which is smooth over $U$. 

\begin{enumerate}
\item Let $f \colon T \ra S$ be a morphism of algebraic stacks. We say that $f$ is \emph{non-degenerate} if $T$ is reduced and  $U \times_S T$ is dense in $T$. 

\item Let $f\colon T \ra S$ be a non-degenerate morphism of locally-noetherian stacks. We say $f$ is \emph{aligning} if $C \times_S T \ra T$ is aligned. 

\item The category of aligning morphisms is defined as the full sub-2-category of stacks over $S$ whose objects are aligning morphisms. 

\item A \emph{universal aligning morphism} is a terminal object in the 2-category of aligning morphisms over $S$. 
\end{enumerate}
\end{definition}
\begin{remark}\leavevmode
\begin{enumerate}
\item A large part of the paper is taken up with proving the existence and certain properties of a universal aligning morphism in the case of the universal stable curve. 
\item We will show that the universal aligning morphism is in fact a separated algebraic space locally of finite type over $S$. 
\end{enumerate}
\end{remark}

Later we will show that universal aligning morphisms exist, but for now we will just show that they pull back nicely if they do exist. 
\begin{lemma}\label{lem:tilde_pullback}
Let $S$ be a reduced locally noetherian stack, $C/S$ a nodal curve smooth over a dense open $U\hra S$, and let $f\colon T \ra S$ be a non-degenerate morphism. Write $\tilde{S}/S$ for the universal aligning morphism for $C/S$, and $\tilde{T}/T$ for the universal aligning morphism for $C_T/T$ - we assume that both exist. Then
\begin{enumerate}
\item
The natural map $f^*\tilde{S} \ra T$ is aligning for $C_T/T$, so we get a map $\phi\colon f^*\tilde{S} \ra \tilde{T}$;
\item The map $\phi$ is an isomorphism. 
\end{enumerate}
\end{lemma}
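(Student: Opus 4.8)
The plan is to prove the lemma using only the universal (terminal‑object) property of the aligning morphisms, together with the fact that alignment of a nodal curve is stable under arbitrary base change; the explicit construction of $\tilde{S}$, not yet available at this point, will not be needed. The base‑change statement I would first record is: if $C'/S'$ is an aligned nodal curve and $S''\ra S'$ is any morphism of locally noetherian stacks, then $C'\times_{S'}S''/S''$ is aligned. Working smooth‑locally one reduces to schemes via \cref{remark:fppf_descent}, and then checks the condition at a geometric point $s''$ lying over $s'$: the dual graph of the fibre is unchanged (enlarging the algebraically closed ground field does not alter the dual graph of a nodal curve), and by \cref{local_rings_on_nodal} the singular ideal of each node of the pulled‑back curve is generated by the image of the singular ideal of the corresponding node of $C'$ under $\ca{O}\et_{S',s'}\ra\ca{O}\et_{S'',s''}$; hence a simultaneous expression $\ell(e)=l^{n(e)}$ of the labels survives, with $l$ replaced by its image.

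For part (1): since $C/S$ is smooth over $U$, the curve $C_U/U$ is trivially aligned, so $U\ra S$ is an aligning morphism, hence factors through $\tilde{S}$, and the usual terminal‑object bookkeeping identifies $\tilde{S}\times_S U$ with $U$. In particular $f^*\tilde{S}\ra T$ is an isomorphism over the dense open $U_T:=U\times_S T$. Assuming for now that $f^*\tilde{S}\ra T$ is non‑degenerate (discussed below), it is aligning for $C_T/T$: the curve $C_T\times_T f^*\tilde{S} = C\times_S\tilde{S}\times_S T$ is the base change along $f^*\tilde{S}\ra\tilde{S}$ of the aligned curve $C\times_S\tilde{S}/\tilde{S}$, hence aligned by the base‑change statement. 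Terminality of $\tilde{T}$ then yields the claimed $T$‑morphism $\phi\colon f^*\tilde{S}\ra\tilde{T}$.

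For part (2) I would build the inverse symmetrically. The composite $\tilde{T}\ra T\ra S$ is non‑degenerate, since $\tilde{T}$ is reduced and $U\times_S\tilde{T} = U_T\times_T\tilde{T}$ is dense in $\tilde{T}$, and $C\times_S\tilde{T} = C_T\times_T\tilde{T}$ is aligned; so $\tilde{T}\ra S$ is an aligning morphism for $C/S$, and terminality of $\tilde{S}$ produces an $S$‑morphism $\psi\colon\tilde{T}\ra\tilde{S}$, which together with $\tilde{T}\ra T$ gives a $T$‑morphism $\psi'\colon\tilde{T}\ra f^*\tilde{S}$. Then $\phi\circ\psi'$ is a $T$‑endomorphism of the terminal object $\tilde{T}$, so it is the identity; and $\psi'\circ\phi\colon f^*\tilde{S}\ra f^*\tilde{S}$ coincides with the identity after composing with the two projections, to $T$ (trivially, everything being over $T$) and to $\tilde{S}$ (because $f^*\tilde{S}\ra S$ is again aligning for $C/S$, so by terminality of $\tilde{S}$ the two resulting $S$‑morphisms $f^*\tilde{S}\ra\tilde{S}$ agree). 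Hence $\phi$ is an isomorphism.

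The step I expect to be the main obstacle is the non‑degeneracy of $f^*\tilde{S}\ra T$ (and of $f^*\tilde{S}\ra S$) invoked above — concretely, that $f^*\tilde{S}$ is reduced and that $U_T$ is scheme‑theoretically dense in it. The isomorphism over $U_T$ settles both over that dense open, but the behaviour over the boundary $T\setminus U_T$ does not follow from the terminal‑object property alone; I would expect to need either some structural input on the morphism $\tilde{S}\ra S$ (for instance flatness, or the explicit local model for $\tilde{S}$) or to first run the whole comparison after replacing $f^*\tilde{S}$ by its reduction, recovering reducedness a posteriori from the resulting isomorphism with $\tilde{T}$.
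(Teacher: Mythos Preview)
Your argument follows exactly the same route as the paper's: show that pullback of aligned curves is aligned, deduce that $f^*\tilde{S}\to T$ is aligning, then build an inverse $\psi\colon\tilde{T}\to f^*\tilde{S}$ from the universal property of $\tilde{S}$ applied to the composite $\tilde{T}\to T\to S$, and conclude by uniqueness. The paper's proof is just a terser version of what you wrote.

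You are right, however, to flag the non-degeneracy of $f^*\tilde{S}\to T$ as a genuine issue, and in fact the paper's own proof glosses over exactly this point: it simply asserts ``so we immediately get that $f^*\tilde{S}\to T$ is aligning for $C_T/T$'' without checking that $f^*\tilde{S}$ is reduced or that $U_T$ is dense in it. At this stage of the paper, where $\tilde{S}$ is only assumed to exist as a terminal object, one knows that $\tilde{S}$ itself is reduced with $U$ dense (since $\tilde{S}\to S$ is aligning), but neither property is preserved under an arbitrary fibre product with a non-degenerate $T\to S$. Your instinct that one needs structural input on $\tilde{S}\to S$ is correct: once the explicit construction is available (each chart $S_M\hookrightarrow\on{Spec}R_M'$ is by definition the closure of the image of $U$, and in the normal-crossings case $\tilde{S}\to S$ is even flat, cf.\ \cref{lemma:NCD_preverved} and the proof of \cref{thm:Stilde_regular}), non-degeneracy of the pullback follows. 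Your fallback of passing to the reduction does not by itself fix density, so the honest resolution really does use the later material; the paper is tacitly relying on this.
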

\begin{proof}\leavevmode
\begin{enumerate}
\item
The pullback of an aligned curve is again aligned, so we immediately get that $f^*\tilde{S} \ra T$ is aligning for $C_T/T$. 
\item The composite $\tilde{f}\colon \tilde{T} \ra T \ra S$ is non-degenerate since $f^{-1}U$ is dense in $\tilde{T}$. Also, $\tilde{f}^*C/\tilde{T}$ is aligned by definition of $\tilde{T}$, so $\tilde{f}$ is aligning. Hence $\tilde{f}$ factors via $\tilde{S}\ra S$, yielding a map $\psi\colon\tilde{T} \ra f^*\tilde{S}$. The uniqueness part of the universal property then shows that $\psi$ is an inverse to $\phi$. 
\end{enumerate}
\end{proof}

\section{Quasisplit curves}
In order to construct the universal aligning morphism by descent, we will make use of the notion of a `quasisplit' nodal curve. This differs from (though is somewhat similar to) the notion of a split nodal curve in \cite{Jong1996Smoothness-semi}; neither is stronger than the other. 


\begin{definition}\label{definition:quasisplit}
Let $S$ be a scheme, and let $C/S$ be a nodal curve. We write $\on{Sing}(C/S)$ for the closed subscheme of $C$ where $C \ra S$ is not smooth (more precisely, it is the closed subscheme cut out by the first fitting ideal of the sheaf of relative 1-forms of $C/S$). We say $C/S$ is \emph{quasisplit} if the following two conditions hold:
\begin{enumerate}[label={\ref{definition:quasisplit}.\arabic*}]
\item \label{item:source-locally-closed-imm}
the morphism $\on{Sing}(C/S) \ra S$ is an immersion Zariski-locally on the source (for example, a disjoint union of closed immersions);
\item \label{item:split_fibres} for every field-valued fibre $C_k$ of $C/S$, every irreducible component of $C_k$ is geometrically irreducible. 
\end{enumerate}
\end{definition}

\begin{remark}\label{remark:no_geom_points_for_quasisplit}
Given a quasisplit nodal curve $C/S$ and a geometric point $\bar{s}$ of $S$ with image $s \in S$, the graph $\Gamma_{\bar{s}}$ depends only on $s$ and not on $\bar{s}$. As such, it makes sense to talk about the dual graph of $C_s$ for $s \in S$ a point in. Moreover, the labels on such a graph, which a-priori live in the \'etale local ring at $\bar{s}$, are easily seen to live in the henselisation of the Zariski local ring, by condition \ref{item:split_fibres}. Applying condition \ref{item:source-locally-closed-imm}, we see that the labels in fact live in the Zariski local ring at $s$. 

Summarising, given a quasisplit curve $C/S$ and a point $s \in S$, it makes sense to talk about the labelled dual graph of the fibre of $C$ over $s$, and the labels live in the Zariski local ring at $s$. For the remainder of this paper, we will do this without further comment.  
\end{remark}

\begin{lemma}\label{lem:quasisplit_after_etale_cover}
Let $S$ be a locally noetherian scheme and $C/S$ a nodal curve. Then there exists an \'etale cover $f\colon S' \ra S$ such that $f^*C \ra S'$ is quasisplit. 
\end{lemma}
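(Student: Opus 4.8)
The plan is to arrange the two defining conditions of quasisplitness (\cref{item:source-locally-closed-imm} and \cref{item:split_fibres}) by two successive étale base changes, having first checked that each of the two conditions is stable under arbitrary further base change, so that the composite cover still does the job. Since an étale cover is just a surjective étale morphism, and a disjoint union of étale covers of the members of an affine open cover of $S$ is again an étale cover of $S$, I reduce at once to the case in which $S$ is affine and noetherian, in particular quasi-compact.

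For \cref{item:source-locally-closed-imm} I will use that $Z := \on{Sing}(C/S) \to S$ is finite and unramified. It is finite because it is a closed subscheme of the proper $S$-scheme $C$ and is quasi-finite over $S$ (the non-smooth points of a nodal curve are isolated in their fibres), and it is unramified by \cref{local_rings_on_nodal}: in the local model $\widehat{\ca{O}\et_{C,c}} \cong \widehat{\ca{O}\et_{S,s}}[[x,y]]/(xy - \alpha)$ the subscheme $Z$ is cut out by $(x,y)$, so étale-locally it maps to $S$ through the quotient $\ca{O}\et_{S,s} \to \ca{O}\et_{S,s}/\alpha$. I then invoke the étale-local structure of finite unramified morphisms: over the strict henselisation of a point of $S$ such a morphism is a finite disjoint union of closed immersions; this description spreads out over an étale neighbourhood of the point, and, $S$ being quasi-compact, finitely many such neighbourhoods cover it, their disjoint union being a finite étale cover $S_1 \to S$ for which $Z \times_S S_1 \to S_1$ is a disjoint union of closed immersions. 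As the first Fitting ideal commutes with base change we have $\on{Sing}(C_{S''}/S'') = Z \times_S S''$ for all $S'' \to S$, so \cref{item:source-locally-closed-imm} is preserved under every further base change.

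For \cref{item:split_fibres} I first observe that this condition concerns only field-valued fibres, which factor through $S_{\on{red}}$ and are unchanged upon replacing $S$ by $S_{\on{red}}$; since étale covers of $S_{\on{red}}$ lift uniquely to étale covers of $S$ (topological invariance of the étale site), I may assume $S$, and hence $C$, reduced. Let $\tilde C \to C$ be the normalisation; for a nodal curve this is a smooth proper $S$-curve whose formation commutes with base change, and on each field-valued fibre it is the normalisation of $C_k$, hence a bijection on irreducible components, which for the smooth curve $\tilde C_k$ are the same as its connected components. It therefore suffices to make every connected component of every fibre of $\tilde C/S$ geometrically connected. Forming the Stein factorisation $\tilde C \to T \to S$, the hypothesis that $\tilde C/S$ is smooth and proper ensures that $T = \on{Spec}(\pi_* \ca{O}_{\tilde C}) \to S$ is finite locally free with base-change-compatible formation and that $\tilde C \to T$ has geometrically connected fibres; thus the connected components of $\tilde C_s$ correspond to those of $T_s$, and I only need to make the latter geometrically connected. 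This is the étale-local structure of finite locally free morphisms: étale-locally on the base such a morphism is a disjoint union of morphisms all of whose fibres are geometrically connected, and spreading this out and using quasi-compactness of $S$ gives an étale cover $S_2 \to S$ over which $T \times_S S_2 \to S_2$ has this form, forcing every irreducible component of every field-valued fibre of $C_{S_2}/S_2$ to be geometrically irreducible. Again this condition involves only field-valued fibres, so it too is stable under further base change.

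Taking the étale cover $S' := S_1 \times_S S_2 \to S$, the two stability statements give that $C \times_S S' \to S'$ satisfies both \cref{item:source-locally-closed-imm} and \cref{item:split_fibres}, i.e. it is quasisplit. The parts I expect to require the most care are the two invoked structure theorems for finite, respectively finite unramified, morphisms and their passage from a statement near each point to a statement over a single étale cover via quasi-compactness, together with the standard but not-quite-trivial facts that the normalisation of a nodal curve and the Stein factorisation of a smooth proper curve commute with base change; the rest is formal.
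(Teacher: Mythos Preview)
Your treatment of condition~\ref{item:source-locally-closed-imm} is essentially the same as the paper's: both of you use that $\on{Sing}(C/S)\to S$ is finite unramified, invoke its \'etale-local structure over strictly henselian local rings, and spread out.

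For condition~\ref{item:split_fibres}, however, your approach has a genuine gap. The assertion that the normalisation $\tilde C\to C$ of a nodal curve over $S$ is a \emph{smooth} proper $S$-curve whose formation commutes with base change is false. Take $S=\on{Spec}k[t]$ and the local model $C=\{xy=t\}$ (or any stable one-parameter degeneration to a nodal curve). The total space $C$ is regular, hence normal, so $\tilde C=C$; but the fibre over $t=0$ is the node $\{xy=0\}$, which is neither smooth nor the normalisation of $C_0$. Thus $\tilde C$ is not smooth over $S$, and the fibre of $\tilde C$ over $0$ is not the normalisation of the fibre. In general the normalisation of the total space only ``sees'' the nodes that persist on the whole component of $S$ and ignores those that smooth out generically, so there is no way to recover the fibrewise component structure from it. The subsequent Stein-factorisation step (which would be fine for a genuinely smooth proper curve, since then $T\to S$ is finite \'etale) therefore never gets off the ground.

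The paper handles condition~\ref{item:split_fibres} by a different, more direct argument: an irreducible component of a fibre $C_k$ is geometrically irreducible as soon as there is a section of $C\to S$ through its smooth locus. Since the smooth locus of $C/S$ is dense in every fibre and smooth morphisms admit sections \'etale-locally through any chosen point, one obtains the required \'etale cover. If you wish to repair your approach, you would need a genuine \emph{simultaneous} (fibrewise) normalisation, which only exists after you already know the singular locus is a disjoint union of sections over $S$ and requires an explicit local construction rather than ordinary normalisation; at that point the paper's section argument is both shorter and cleaner.
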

\begin{proof}
First we construct an \'etale cover over which the non-smooth locus is a disjoint union of closed immersions. Note first that this holds after base-change to the \'etale local ring at any geometric point, by the structure of finite unramified modules over strictly henselian local rings \cite[\href{http://stacks.math.columbia.edu/tag/04GL}{Tag 04GL}]{stacks-project}. Since $C/S$ is of finite presentation, one can show by a standard argument that the same statement holds on some \'etale cover. 

The non-smooth locus being a union of closed immersions is a property stable under base-change, so we now construct a further \'etale cover over which the irreducible components of fibres become geometrically irreducible. This is easy: if $C\ra S$ admits a section through the smooth locus of a given irreducible component of a field-valued fibre, then that component is geometrically irreducible. Moreover, the smooth locus of $C/S$ is dense in every fibre. As such, it is enough to show that, \'etale-locally, there is a section through the smooth locus of every component of every field-valued fibre. This holds since a smooth morphism admits \'etale-local sections through every point. 
\end{proof}

 \section{Specialisation maps between labelled graphs for quasisplit curves}\label{sec:specialisation_maps}
Given a quasisplit nodal curve $C/S$, and two points $s$, $\zeta$ of $S$ with $s$ in the closure of $\zeta$, we will show that there is a well-defined `specialisation map' $\Gamma_s \ra \Gamma_\zeta$ on the labelled graphs. First, we need to define a morphism of labelled graphs. 

\begin{definition}
A morphism of graphs sends vertices to vertices, and sends edges to either edges or vertices (thinking of the latter as `contracting an edge'), such that the obvious compatibility conditions hold. 

 A morphism of edge-labelled graphs
 $$\phi\colon (\Gamma, \ell\colon \on{edges}(\Gamma) \ra L) \ra (\Gamma', \ell'\colon \on{edges}(\Gamma') \ra L')$$
is a pair consisting of a morphism of graphs $\Gamma \ra \Gamma'$ and a morphism of monoids from $L$ to $L'$ such that the labellings on non-contracted edges match up. An isomorphism is a morphism with a two-sided inverse. 
\end{definition}

\begin{definition}\label{def:contraction_map}
Let $S$ be a locally noetherian scheme, and let $C/S$ be a quasisplit nodal curve. Let $s$, $\zeta \in S$ be two points such that $\zeta$ specialises to $s$ (i.e. $s \in \overline{\{\zeta\}}$). Write $\Gamma_s$ and $\Gamma_\zeta$ for the corresponding labelled graphs  - recall from \cref{remark:no_geom_points_for_quasisplit} that this makes sense without choosing separable closures of the residue fields, and moreover that the labels of $\Gamma_s$ and $\Gamma_\zeta$ may be taken to lie in the Zariski local rings at $s$ and $\zeta$ respectively. Write 
$$\on{sp}\colon \ca{O}_{S,s} \hra \ca{O}_{S,\zeta}$$
for the canonical (injective) map. 

We will define a map of labelled graphs 
$$\phi\colon \Gamma_s \ra \Gamma_\zeta, $$
writing $\phi_V$ for the map on vertices and $\phi_E$ for the map on edges. First we define the map on vertices. Let $v \in V_\zeta$ be a vertex of $\Gamma_\zeta$. Then $v$ corresponds to an irreducible component $v$ of the fibre over $\zeta$. Let $\cl{V}$ denote the Zariski closure of this component in $C$. Then $\cl{V} \times_S s$ is a union of irreducible components of $C_s$, call them $v_1, \cdots, v_n$. We define $\phi_V(v_i) = v$ for $1 \le i \le n$. One checks easily that each irreducible component of $C_s$ arrises in this way from exactly one vertex of $\Gamma_\zeta$, so we obtain a well-defined map $\phi_V\colon V_s \ra V_\zeta$. 

Next we define the map on edges. Write $Z = \overline{\{\zeta\}} \sub S$ for the closure of $\zeta$. Let $e\in E_s$ be an edge of $\Gamma_s$. Then there are exactly two possibilities\footnote{To see this, note first that a section in $C_T/T$ as in case 1 is unique if it exists. Suppose we have $\phi_V(v_1) \neq \phi_V(v_2)$. Then $v_1$ is contained in some irreducible component $T_1$ of $C_T$, and $v_2$ is in a component $T_2$, with $T_1 \neq T_2$. In this situation observe that $e \in T_1 \cap T_2$, and (by considering the local structure of the singularities of a quasisplit nodal curve) we find that $T_1 \cap T_2$ is locally on $Z$ a union of sections, so case 1 must hold. }:
\begin{enumerate}
\item[Case 1)]
there exists an open neighbourhood $s \in Z^0 \sub Z$ and a unique section $\tilde{e}\colon Z^0 \ra \on{Sing}(C_{Z^0}/Z^0)$ such that $(\tilde{e})_s = e$. Then define $\phi_E(e) = (\tilde{e})_\zeta$; 
\item[Case 2)] case 1 does not hold \emph{and} (writing $v_1$, $v_2$ for the endpoints of $e$) we have that $\phi_V(v_1) = \phi_V(v_2)$. Then map $e$ to $\phi_V(v_1)$. 
\end{enumerate}
In fact, case 2 holds if and only if the label $l(e)$ is a unit at $\zeta$. 
\end{definition}


A more intuitive description of the specialisation morphism $\phi \colon \Gamma_s \ra \Gamma_\zeta$ on labelled graphs may be given as follows: starting with $\Gamma_s$, first replace each label by its image under $\on{sp}$. Then contract every edge whose label is a unit. This is exactly the labelled graph $\Gamma_\zeta$. Given that such a simple description is available, why did we give the long-winded definition above? Essentially this is because it is otherwise not a-priori clear that the labelled graph resulting from this simple description is (naturally) isomorphic to the labelled graph $\Gamma_\zeta$. 



\section{Centrally controlled curves}\label{sec:controlled_curves}

\begin{definition}Let $C/S$ be a nodal curve. A point $\frak{s} \in S$ is called a \emph{controlling point} for $C/S$ if the following conditions hold:
\begin{enumerate}
\item $S$ is affine and noetherian, and $C/S$ is quasisplit;
\item for each edge $e$ of $\Gamma_\frak{s}$, the intersection of $\on{label}(e)$ with $\ca{O}_S(S)$ inside the local ring $\ca{O}_{S,\frak{s}}$ is a principal ideal;
\item
for every point $s$ in $S$, there exists a point $\eta_s \in S$ such that
\begin{enumerate}
\item both $s$ and $\frak{s}$ are in the closure of $\eta_s$;
\item the specialisation map $\Gamma_s \ra \Gamma_{\eta_s}$ is an isomorphism on the underlying graphs. 
\end{enumerate}
\end{enumerate}
\end{definition}
By (2), we can think of the labels of $\Gamma_\frak{s}$ as being principal ideals in $\ca{O}_S(S)$. We will generally reserve lowercase fraktur letters $\frak{s}$ and $\frak{t}$ for controlling points. 

\begin{definition}
Let $C/S$ be a nodal curve over a locally noetherian scheme. Let $\tau$ be in $\{$smooth, \'etale$\}$. A \emph{controlled $\tau$-cover} of $C/S$ consists of a collection $(S_i, \frak{s}_i)_{i \in I}$ of pointed schemes and a map $\bigsqcup_{i\in I} S_i \ra S$ such that 
\begin{enumerate}
\item $\bigsqcup_{i\in I} S_i \ra S$ is a cover in the $\tau$-topology;
\item for each $i$, the point $\frak{s}_i$ is a controlling point for $C \times_S S_i$. 
\end{enumerate}
\end{definition}

\begin{lemma}\label{lem:controlled_after_shrink}
Let $C/S$ be a quasisplit curve over a locally noetherian scheme, and $\frak{s} \in S$ a point. Then there exists an open neighbourhood $V$ of $\frak{s}$ in $S$ such that $\frak{s}$ is a controlling point for $C_V/V$. 
\end{lemma}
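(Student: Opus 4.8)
The plan is to satisfy the three conditions in the definition of a controlling point one at a time, shrinking $S$ around $\frak{s}$ at each stage, and always keeping in mind that the target of the lemma is merely the existence of \emph{some} open neighbourhood $V$. First I would reduce to $S$ affine and noetherian simply by passing to an affine open containing $\frak{s}$; quasisplitness is preserved under this base change (it is Zariski-local on $S$ and stable under arbitrary base change), so condition (1) holds on any affine neighbourhood.

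Next I would arrange condition (2). By \cref{remark:no_geom_points_for_quasisplit} the labels of $\Gamma_{\frak{s}}$ already live in the Zariski local ring $\ca{O}_{S,\frak{s}}$, and since $C/S$ is quasisplit the non-smooth locus is, locally on the source, a disjoint union of locally closed immersions into $S$; each such immersion through a point over $\frak{s}$ is a closed immersion into an open neighbourhood, cut out by a finitely generated ideal of $\ca{O}_S(S)$ after possibly shrinking $S$. For each of the finitely many edges $e$ of $\Gamma_{\frak{s}}$ we have an element $\alpha_e \in \ca{O}_{S,\frak{s}}$ (the singular parameter of \cref{local_rings_on_nodal}) generating $\on{label}(e)$; clearing denominators, $\alpha_e$ is the image of some $a_e \in \ca{O}_S(S)$, and on the open subset where the relevant units remain units the ideal $\on{label}(e)\cap \ca{O}_S(S)$ is generated by $a_e$. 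Intersecting over the finitely many edges gives an open $V'$ on which condition (2) holds.

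The substantive point is condition (3): for every $s \in S$ we need a generisation $\eta_s$ of both $s$ and $\frak{s}$ such that $\Gamma_s \to \Gamma_{\eta_s}$ is an isomorphism of underlying graphs. Here is where shrinking $S$ really does work for us. The dual graph $\Gamma_t$ of the fibre at $t$ is constant along the generic point of each irreducible component of $S$, and more precisely, using the specialisation maps of \cref{def:contraction_map}, the underlying graph of $\Gamma_t$ only \emph{loses} edges under specialisation. I would choose $V$ to be a small enough affine neighbourhood of $\frak{s}$ that $V$ is contained in the closure of a single irreducible component $Z$ of $S$ passing through $\frak{s}$, equivalently that every point of $V$ is a specialisation of the generic point $\eta$ of that component — this is possible because $\frak{s}$ lies on only finitely many components of the noetherian scheme $S$, and one may delete the others. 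Now for $s \in V$ one has specialisation maps $\Gamma_s \to \Gamma_\eta$ and $\Gamma_{\frak{s}} \to \Gamma_\eta$, both surjective on vertices; the issue is that a-priori $\Gamma_\eta$ might be strictly bigger than $\Gamma_{\frak{s}}$ (the generic fibre could be more singular). To fix this I would further restrict $V$ to the open locus where the number of nodes of the fibre equals the number of nodes of $C_{\frak{s}}$ — the node count is upper semicontinuous, wait, lower semicontinuous under generisation, so this is an open condition containing $\frak{s}$, and on it $\Gamma_s \to \Gamma_\eta$ and $\Gamma_{\frak{s}} \to \Gamma_\eta$ are both edge-bijective, hence isomorphisms of underlying graphs, so we may take $\eta_s = \eta$ for every $s \in V$.

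The main obstacle I anticipate is making the last paragraph genuinely correct rather than merely plausible: one must be careful that ``number of edges of $\Gamma_t$'' as a function of $t \in S$ is honestly the right semicontinuous invariant (it is the number of geometric nodes, i.e. the degree of $\on{Sing}(C/S) \to S$, which for a quasisplit curve is locally constant on the source and behaves well), and that a graph morphism which is a bijection on vertices and on edges and respects incidence is automatically an isomorphism — both true, but worth stating cleanly. Everything else is a routine finiteness-and-shrinking argument using that $S$ is noetherian and $\Gamma_{\frak{s}}$ has finitely many edges.
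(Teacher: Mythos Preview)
Your handling of conditions (1) and (2) is fine and matches the paper. The argument for condition (3), however, has a genuine gap.

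You attempt to use a \emph{single} point $\eta_s = \eta$, the generic point of one irreducible component through $\frak{s}$, for every $s \in V$. This cannot work. Specialisation maps contract edges, so $\Gamma_\eta$ has \emph{at most} as many edges as $\Gamma_{\frak{s}}$, never more (your sentence ``the generic fibre could be more singular'' has the direction reversed). Consequently, demanding that $\Gamma_s \to \Gamma_\eta$ be an isomorphism for every $s \in V$ forces every fibre over $V$ to have exactly as many nodes as $C_\eta$ --- and in particular forces $\Gamma_{\frak{s}} \cong \Gamma_\eta$. But the locus of points with exactly $k$ nodes is only locally closed, not open: concretely, take $S = \on{Spec} k[x,y]$ with two nodes, cut out by $x$ and $y$, and $\frak{s}$ the origin. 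Any open neighbourhood of $\frak{s}$ contains points where one of $x,y$ is a unit, hence with fewer nodes, so your restricted $V$ cannot be open around $\frak{s}$.

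The missing idea is that $\eta_s$ must vary with $s$. The paper stratifies $S$ by combinatorial type: it lets $\Sigma$ be the closure under intersections and irreducible components of the images of the singular loci, throws away every stratum not containing $\frak{s}$ (this is the correct shrinking step), and for each $s$ takes $\eta_s$ to be the generic point of the \emph{smallest} $\sigma \in \Sigma$ containing $s$. By construction this $\sigma$ also contains $\frak{s}$, and minimality guarantees that no label of $\Gamma_s$ becomes a unit at $\eta_s$, so $\Gamma_s \to \Gamma_{\eta_s}$ is an isomorphism on underlying graphs. (A separate easy argument handles $s$ in the smooth locus.) Your proposal would be repaired by introducing this stratification.
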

\begin{proof}
Shrinking $S$, we may assume that $S$ is affine and that every label on the graph $\Gamma_\frak{s}$ is generated by an element of $\ca{O}_{S}(S)$ (i.e. these locally-principal ideals are principal). Then delete from $S$ every irreducible component that does not contain $\frak{s}$.

Let $\Sigma$ denote the smallest collection of (reduced) closed subsets of $S$ which is closed under:
\begin{itemize}
\item
pairwise intersections;
\item taking irreducible components;
\end{itemize}
and which contains the image in $S$ of $\on{Sing}(C/S)$. 

Now set 
$$Z \defeq \bigcup\{\sigma \in \Sigma | \frak{s} \notin \sigma\}, $$
the union of all elements of $\Sigma$ which don't contain $\frak{s}$. Note this is a closed subset since $\Sigma$ is finite. Let $V$ denote the complement of $Z$ in $S$.

Now let $s \in V$ be any point. Suppose first that $\Gamma_s$ consists of a single vertex and no edges - this is equivalent to saying that $s$ is not contained in any element of $\Sigma$, or to saying that $C_s/s$ is smooth. Now the locus where $C_V/V$ is smooth is open in $V$ and is non-empty (since it contains $s$). Since every irreducible component of $V$ contains $\frak{s}$ by assumption. Hence we can find a point $\eta_s$ with $C_{\eta_s}/\eta_s$ smooth and such that both $\frak{s}$ and $s$ are specialisations of $\eta_s$. 

Suppose now that $C_s/s$ is not smooth. Let $\sigma \in \Sigma$ be the smallest element of $\Sigma$ which contains $s$ (note that $\sigma$ contains $\frak{s}$ by construction), and let $\eta_s$ be the generic point of $\sigma$. Then it is clear that both $\frak{s}$ and $s$ are contained in the closure of $\eta_s$. It remains to see that the specialisation map 
$$\on{sp}\colon \Gamma_s \ra \Gamma_{\eta_s}$$
is an isomorphism on the underlying graphs. This is equivalent to checking that no label of $\Gamma_s$ is mapped to a unit in $\ca{O}_{S, \eta_s}$. Well, any label $l$ on an edge of $\Gamma_s$ which becomes a unit at $\eta_s$ will cut out a proper closed subscheme of $\sigma$ containing $s$, but this is impossible by the definition of $\sigma$. 
\end{proof}

\begin{lemma}\label{lem:controlled_cover_exists}
Let $C/S$ be a nodal curve over a locally noetherian base. Then there exists an \'etale controlling cover for $C/S$. 
\end{lemma}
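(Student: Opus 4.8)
Looking at Lemma~\ref{lem:controlled_cover_exists}, the claim is that any nodal curve $C/S$ over a locally noetherian base admits an \'etale controlled cover. The proof should be a fairly short assembly of the preceding results.

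\textbf{The plan.} First I would reduce to the quasisplit case: by \cref{lem:quasisplit_after_etale_cover} there is an \'etale cover $S' \ra S$ with $C\times_S S' \ra S'$ quasisplit, and since the composite of \'etale covers is \'etale, and a controlled cover of $C_{S'}/S'$ is automatically a controlled cover of $C/S$ (controlling points and the quasisplit condition are stated fibrewise / Zariski-locally on the source), it suffices to treat the quasisplit case. So assume $C/S$ is quasisplit. Then for each point $\frak{s} \in S$, \cref{lem:controlled_after_shrink} gives an open neighbourhood $V_{\frak{s}} \ni \frak{s}$ in $S$ such that $\frak{s}$ is a controlling point for $C_{V_{\frak{s}}}/V_{\frak{s}}$. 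The collection $(V_{\frak{s}}, \frak{s})_{\frak{s} \in S}$ of pointed open subschemes, together with the map $\bigsqcup_{\frak{s} \in S} V_{\frak{s}} \ra S$, is then the desired controlled \'etale cover: an open cover is an \'etale cover, and condition~(2) in the definition of controlled cover is exactly what \cref{lem:controlled_after_shrink} provides.

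\textbf{On the reduction step.} The one point needing a little care is that ``controlled cover of $C_{S'}/S'$ implies controlled cover of $C/S$'', i.e. that being a controlling point is not affected by further composition with the \'etale map $S' \to S$. This is immediate because the defining conditions of a controlling point $\frak{s}$ for $C\times_S S_i$ refer only to $S_i$, the curve $C\times_S S_i$, the local rings of $S_i$, and the labelled graphs of fibres of $C\times_S S_i$ --- none of which change when we regard $S_i$ as a scheme over $S$ rather than over $S'$. Quasisplitness is likewise a property of $C\times_S S_i / S_i$ alone.

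\textbf{Main obstacle.} There is essentially no obstacle: all the real work has been done in \cref{lem:quasisplit_after_etale_cover} and \cref{lem:controlled_after_shrink}. The only thing to be slightly careful about is whether one wants to pass to a quasi-compact or otherwise better-behaved index set --- the definition of controlled cover as stated allows an arbitrary index set $I$, so no such care is needed here, but if later applications want finiteness one would restrict to a finite subcover over quasi-compact opens. I will phrase the proof to simply produce the (possibly large) cover indexed by the points of (a quasisplit \'etale cover of) $S$.
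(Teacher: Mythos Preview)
Your proposal is correct and follows essentially the same route as the paper: reduce to the quasisplit case via \cref{lem:quasisplit_after_etale_cover}, then apply \cref{lem:controlled_after_shrink} at each point to obtain the open neighbourhoods making up the controlled cover. The paper's proof is simply a two-line version of what you wrote, without the explicit justification of the reduction step or the remark on the index set.
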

\begin{proof}
By \cref{lem:quasisplit_after_etale_cover}, we may assume $C/S$ is quasisplit, and it is enough to show that every point $\frak{s} \in S$ has an open neighbourhood for which $\frak{s}$ is a controlling point. We are done by \cref{lem:controlled_after_shrink}. 
\end{proof}


\newcommand{\cseq}{\mathfrak{K}}

\section{Construction of universal aligning morphisms}\label{sec:scheme_representing_relations}

Now we will prove existence of universal aligning morphisms, and later some properties. 
\subsection{The case of controlled curves}
Throughout this section we fix a nodal curve $C/S$ and a controlling point $\frak{s} \in S$. 

A \emph{circuit} in a graph is a path which starts and ends at the same point, and otherwise does not repeat any edge or vertex. 

\begin{definition}
Let $G = (V,E, \on{ends})$ be a finite graph. To any subset $E_0 \sub E$ we associate the unique subgraph of $G$ with edges $E_0$ and with no isolated vertices - we will often fail to distinguish between $E_0$ and the subgraph. We say $E_0$ is \emph{circuit-connected} if for every pair $e$, $e'$ of distinct edges in $E_0$ there is at least one circuit $\gamma \sub E_0$ such that $e \in\gamma$ and $e' \in \gamma$. 
\end{definition}
\begin{lemma}
The maximal circuit-connected subsets of $E$ form a partition of $E$. 
\end{lemma}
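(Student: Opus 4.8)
The plan is to show that ``circuit-connectedness'' extends to an equivalence relation on $E$ whose classes are exactly the maximal circuit-connected subsets. The relation ``$e \sim e'$ if $e = e'$ or there is a circuit containing both $e$ and $e'$'' is reflexive and symmetric by construction, so the real content is transitivity: if $e \sim e'$ via a circuit $\gamma_1$ and $e' \sim e''$ via a circuit $\gamma_2$, I want to produce a single circuit containing $e$ and $e''$. This is the main obstacle, and I would handle it with a standard graph-theoretic surgery argument on the symmetric difference $\gamma_1 \triangle \gamma_2$ (or rather on $\gamma_1 \cup \gamma_2$): since $\gamma_1$ and $\gamma_2$ share the edge $e'$, their union is a connected subgraph in which every vertex has even degree except that we can think of it as an Eulerian-type configuration; more carefully, one shows that in $\gamma_1 \cup \gamma_2$ there is a circuit through $e$ and $e''$ by travelling along $\gamma_1$ from $e$ towards $e'$, then diverting onto $\gamma_2$ at the first shared vertex to reach $e''$, and closing up, taking care to excise any repeated vertices to get an honest circuit (a cycle with no repeated vertex or edge). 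The key point is that two circuits sharing at least one edge lie in a common circuit.

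Granting transitivity, the argument concludes quickly. Let the equivalence classes be $\{E_\alpha\}$. Each $E_\alpha$ is circuit-connected: for distinct $e, e' \in E_\alpha$ we have $e \sim e'$, so there is a circuit $\gamma$ with $e, e' \in \gamma$; I should check $\gamma \subseteq E_\alpha$, which follows because every edge of $\gamma$ is $\sim$-equivalent to $e$ (the circuit $\gamma$ itself witnesses this), hence lies in the same class. Conversely, $E_\alpha$ is maximal: if $E_0 \supsetneq E_\alpha$ is circuit-connected and $f \in E_0 \setminus E_\alpha$, pick any $e \in E_\alpha$; circuit-connectedness of $E_0$ gives a circuit through $e$ and $f$, so $e \sim f$, contradicting that they lie in different classes. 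Thus the maximal circuit-connected subsets are precisely the classes of an equivalence relation, and equivalence classes partition $E$.

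The one subtlety to flag is the degenerate/small cases: a class may be a singleton $\{e\}$ (an edge in no circuit, e.g.\ a bridge or a pendant edge), and the empty set of isolated edges is handled vacuously; one should make sure the definition of circuit-connected is read correctly so that singletons count as circuit-connected (the condition quantifies over \emph{distinct} pairs, so it is vacuous for singletons). I would write the proof as: (i) state the relation $\sim$ and note reflexivity and symmetry; (ii) prove the ``two circuits sharing an edge lie in a common circuit'' lemma by the path-surgery argument above, hence transitivity; (iii) deduce that the classes are circuit-connected and maximal, and conclude. I expect step (ii) to be where essentially all the work lies; everything else is bookkeeping.
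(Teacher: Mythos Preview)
Your approach is essentially the paper's: both hinge on the same splicing lemma (given two circuits sharing an edge, any edge of one and any edge of the other lie in a common circuit), proved by exactly the path-surgery you sketch, and your equivalence-relation packaging even coincides with the alternative description the paper records immediately after the proof. The only wrinkle is phrasing --- ``two circuits sharing an edge lie in a common circuit'' should read ``any pair of edges, one from each, lie in a common circuit'' --- but the argument is correct.
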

\begin{proof}
Let $e \in E$, then $\{e\}$ is circuit-connected. It suffices to show that if $\bb{E}$ and $\bb{E}'$ are both circuit-connected and both contain $e$, then the union $\bb{E} \cup \bb{E}'$ is also circuit-connected. If $a$ and $b$ are edges in $\bb{E} \cup \bb{E}'$, then let $\gamma_a$ denote a circuit in $\bb{E} \cup \bb{E}'$ containing both $a$ and $e$, and let $\gamma_b$ be a circuit in $\bb{E} \cup \bb{E}'$ containing both $b$ and $e$. We will construct a circuit in $\bb{E} \cup \bb{E}'$ containing both $a$ and $b$. 

We will do this by `splicing' $b$ into $\gamma_a$. Let $p_0$ and $p_1$ be the ends of $b$ (necessarily distinct unless $a = b$). Let $\gamma_0$ be the shortest sub-path path of $\gamma_b$ which starts at $p_0$, does not contain $b$, and which meets $\gamma_a$ (say at a point $q_0$). Define $\gamma_1$ and $q_1$ similarly. Let $\gamma'$ denote the sub-path of $\gamma_a$ which goes between $q_0$ and $q_1$ and which contains $a$. Then the union of $\gamma'$, $\gamma_0$, $\gamma_1$ and $b$ is a circuit containing $a$ and $b$. 
\end{proof}
We write $\on{Part}(G)$ for this partition. We can give a couple of alternative descriptions of this partition: 
\begin{enumerate}
\item
For each loop $l$, $\{l\}\in\on{Part}(G)$. For each 2-vertex-connected component $H$ of the graph obtained from $G$ by deleting loops, $\on{Part}(G) \ni \on{edges}(H)$ (cf. \cite{Holmes2014Neron-models-an}). 
\item 
For each $e \in E$ let $X(e)$ denote the union of $\{e\}$ with all the other edges which are contained in any common circuit with $e$. Then if two $X(e)$ have non-empty intersection they must be equal, so in this way we obtain a partition of the edges of $G$, and it is exactly the partition given above. 
\end{enumerate}

\begin{definition}[Thickness function]
Let $E$ denote the set of edges of the graph $\Gamma_{\frak{s}}$. A \emph{thickness function} is a function 
\begin{equation*}
M \colon E \ra \bb{Z}_{\ge 0}
\end{equation*}
satisfying the following condition: 

\emph{Let $\Gamma_M$ be the graph obtained from $\Gamma_\frak{s}$ by contracting every edge $e$ such that $M(e) = 0$. We then require that for each set $\bb{E} \in \on{Part}(\Gamma_M)$, we have $\gcd(M(\bb{E})) = 1$. }
\end{definition}

We now give three examples of controlled curves and the possible thickness functions. 
\begin{example} Let $S = \on{Spec}\bb{C}[[u,v]]$, and let $\frak{s}$ be the closed point. Assume $\frak{s}$ is a controlling point, and suppose the graph over $\frak{s}$ is

\definecolor{cqcqcq}{rgb}{0.7529411764705882,0.7529411764705882,0.7529411764705882}
\begin{tikzpicture}[line cap=round,line join=round,>=triangle 45,x=0.5991249375662954cm,y=0.4636246466940381cm]
\clip(-9.05517135775812,-2.420524881055739) rectangle (10.974039992958119,2);
\draw [shift={(0.0,6.0)}] plot[domain=4.124386376837122:5.3003915839322575,variable=\t]({1.0*7.211102550927979*cos(\t r)+-0.0*7.211102550927979*sin(\t r)},{0.0*7.211102550927979*cos(\t r)+1.0*7.211102550927979*sin(\t r)});
\draw [shift={(0.0,-6.0)}] plot[domain=0.982793723247329:2.1587989303424644,variable=\t]({1.0*7.211102550927979*cos(\t r)+-0.0*7.211102550927979*sin(\t r)},{0.0*7.211102550927979*cos(\t r)+1.0*7.211102550927979*sin(\t r)});
\begin{scriptsize}
\draw[color=black] (0,-1.5) node {$e_2$};
\draw[color=black] (0,1.69) node {$e_1$};
\end{scriptsize}
\end{tikzpicture}

\noindent with $\on{label}(e_1) = (u)$, $\on{label}(e_2) = (v)$. Then thickness functions are exactly those functions which send the two edges to non-negative coprime integers, e.g.
\begin{equation*}
M(e_1) = 2,\;\;\;\;\; M(e_2) = 3
\end{equation*}
 or 
 \begin{equation*}
M(e_1) = 0,\;\;\;\;\; M(e_2) = 1.
\end{equation*}
\end{example}

\begin{example}Let $S = \on{Spec}\bb{C}[[u,v,w]]$, and let $\frak{s}$ be the closed point. Assume $\frak{s}$ is a controlling point, and suppose the graph over $\frak{s}$ is

\definecolor{uuuuuu}{rgb}{0.26666666666666666,0.26666666666666666,0.26666666666666666}
\begin{tikzpicture}[line cap=round,line join=round,>=triangle 45,x=0.5cm,y=0.5cm]
\clip(-9.05517135775812,-5.6) rectangle (10.974039992958119,4);
\draw [shift={(-4.0,-4.0)}] plot[domain=0.0:1.0471975511965976,variable=\t]({1.0*8.0*cos(\t r)+-0.0*8.0*sin(\t r)},{0.0*8.0*cos(\t r)+1.0*8.0*sin(\t r)});
\draw [shift={(4.0,-4.0)}] plot[domain=2.0943951023931953:3.141592653589793,variable=\t]({1.0*8.000000000000002*cos(\t r)+-0.0*8.000000000000002*sin(\t r)},{0.0*8.000000000000002*cos(\t r)+1.0*8.000000000000002*sin(\t r)});
\draw [shift={(0,2.92820323027551)}] plot[domain=4.1887902047863905:5.235987755982988,variable=\t]({1.0*8.000000000000002*cos(\t r)+-0.0*8.000000000000002*sin(\t r)},{0.0*8.000000000000002*cos(\t r)+1.0*8.000000000000002*sin(\t r)});
\begin{scriptsize}
\draw[color=black] (3.220796889455059,0.502745483489056) node {$e_2$};
\draw[color=black] (-3,0.502745483489056) node {$e_1$};
\draw[color=black] (0.29632799953723776,-4.564066895554612) node {$e_3$};
\end{scriptsize}
\end{tikzpicture}

\noindent with $\on{label}(e_1) = (u)$, $\on{label}(e_2) = (v)$ and $\on{label}(e_3) = (w)$. Then again the thickness functions are exactly those functions which send the three edges to non-negative integers with no common factor, e.g.
 \begin{equation*}
M(e_1) = 2,\;\;\;\;\; M(e_2) = 3, \;\;\;\;\; M(e_3) = 5
\end{equation*}
or
 \begin{equation*}
M(e_1) = 0,\;\;\;\;\; M(e_2) = 2, \;\;\;\;\; M(e_3) = 3 
\end{equation*}
or
 \begin{equation*}
M(e_1) = 0,\;\;\;\;\; M(e_2) = 0, \;\;\;\;\; M(e_3) = 1. 
\end{equation*}
\end{example}

\begin{example}
 Let $S = \on{Spec}\bb{C}[[u,v,w]]$, and let $\frak{s}$ be the closed point. Assume $\frak{s}$ is a controlling point, and suppose the graph over $\frak{s}$ is

\definecolor{cqcqcq}{rgb}{0.7529411764705882,0.7529411764705882,0.7529411764705882}
\begin{tikzpicture}[line cap=round,line join=round,>=triangle 45,x=0.5991249375662954cm,y=0.4636246466940381cm]
\clip(-9.05517135775812,-2.420524881055739) rectangle (10.974039992958119,2);
\draw [shift={(0.0,6.0)}] plot[domain=4.124386376837122:5.3003915839322575,variable=\t]({1.0*7.211102550927979*cos(\t r)+-0.0*7.211102550927979*sin(\t r)},{0.0*7.211102550927979*cos(\t r)+1.0*7.211102550927979*sin(\t r)});
\draw [shift={(0.0,-6.0)}] plot[domain=0.982793723247329:2.1587989303424644,variable=\t]({1.0*7.211102550927979*cos(\t r)+-0.0*7.211102550927979*sin(\t r)},{0.0*7.211102550927979*cos(\t r)+1.0*7.211102550927979*sin(\t r)});
\draw (-4.0,0.0)-- (4.0,0.0);
\begin{scriptsize}
\draw[color=black] (0,-1.5) node {$e_3$};
\draw[color=black] (0,1.69) node {$e_1$};
\draw[color=black] (0,-0.3) node {$e_2$};
\end{scriptsize}
\end{tikzpicture}

\noindent with $\on{label}(e_1) = (u)$, $\on{label}(e_2) = (v)$ and $\on{label}(e_3) = (w)$ Then every function $M$ which sends the three edges to \emph{positive} integers with no common factor is a thickness function, but if $M$ sends one of the $e_i$ to $0$ then the other two $e_j$ become loops in the graph $\Gamma_M$ obtained by contracting $e_i$, and so must be sent to $1$. Thickness functions are exactly those sending the $e_i$ to positive coprime integers, and those sending $(e_1, e_2, e_3)$ to some permutation of $(0,1,1)$ and $(0,0,1)$. 
\end{example}

We will now set up some notation which we will use repeatedly for the remainder of this paper. 
\begin{setup}\label{def:standard_notation}\leavevmode
\begin{enumerate}
\item
$C/S$ is a nodal curve with controlling point $\frak{s}$;
\item We write $R = \ca{O}_S(S)$ (so $S = \on{Spec} R$); 
\item $U\hra S$ is the largest open in $S$ over which $C$ is smooth, and we assume $U$ is dense in $S$;
\item $M$ is a thickness function, and $\Gamma_M$ is the graph obtained from $\Gamma_\frak{s}$ by contracting every edge $e$ such that $M(e) = 0$;
\end{enumerate}
\end{setup}

\begin{definition}\label{def:rings_of_relations}
Notation as in \cref{def:standard_notation}, and let $\bb{E} \in \on{Part}(\Gamma_M)$. Write $M(e) = m_e$, and write $\ell(e) \in R$ for the label of an edge $e$. Let $n_\star\colon \bb{E} \ra \bb{Z}$ be a function such that $\sum_{e \in \bb{E}} n_em_e = 1$ - this is possible by the coprimality condition. 
Define
$$R'_\bb{E} = \frac{R[a, u_e^{\pm1}:e \in \bb{E}]}{(\ell(e) - a^{m_e}u_e : e \in \bb{E}, \:\: 1-\prod_{e\in\bb{E}} u_e^{n_e})}. $$
This depends on the $n_e$ (so the notation is not good). However, this dependence will turn out not to matter (see \cref{prop:universal_M_aligning}). To simplify the notation, we will assume that a choice of $n_e$ has been made once-and-for-all for every collection of $m_e$. 

We have a natural map
\beqs
\begin{split}
R'_\bb{E} & \ra \ca{O}_{U}(U)\\
a & \mapsto \prod_{e \in \bb{E}} \ell(e)^{n_e}\\
u_e & \mapsto \ell(e)a^{-m_e}. 
\end{split}
\eeqs
We call the element $a \in R'_\bb{E}$ the \emph{aligning element} of $R_\bb{E}'$. 
\end{definition}

\begin{definition}
Notation as in \cref{def:standard_notation}. Define 
$$R'_M = \bigotimes_{\bb{E} \in \on{Part}(\Gamma_M)} R'_{\bb{E}}.  $$
By the universal property of the tensor product we get a natural map 
$$R'_M \ra \ca{O}_{U}(U), $$
and we define $R_M$ to be the image of $R'_M$ under the above map. Set $S_M = \on{Spec}R_M$ (in other words, $S_M$ is the closure of the image of $U$ in $\on{Spec}R_M'$ under the given map). Write $a_\bb{E}$ for the image in $R_M$ of the aligning element in $R_{\bb{E}}'$, then set $a := (a_\bb{E})_{\bb{E} \in \on{Part}(\Gamma_0)}$. 
\end{definition}

\begin{remark}
In the above definition, the procedure of `take the product, then take the image of the result in $\ca{O}_{U}(U)$'  is essentially a way to `splice together' all the rings of relations $R'_{\bb{E}}$ without worrying about how the relations they capture are connected to each other. In general this image $R_M$ is not very explicit (for example, it is not immediately apparent how to write down generators and relations for it, given the same for the $R_{\bb{E}}$). On the other hand, if the singularities of $C/S$ are `mild enough' (for example, in the universal case) then we find that $R_M = R_M'$ - see \cref{lemma:NCD_preverved}. 
\end{remark}

 
\begin{definition}
Notation as in \cref{def:standard_notation}. Let $f\colon T \ra S$ be a non-degenerate morphism. We say $f$ is \emph{$M$-aligning} if for all $\bb{E} \in \on{Part}(\Gamma_M)$ there exists $a_\bb{E} \in \ca{O}_T(T)$ such that for all $e \in \bb{E}$ we have $f^*\on{label}(e)  =  (a_\bb{E})^{M(e)}$. 
\end{definition}

\begin{lemma}\label{lem:loc_M_ali}
Notation as in \cref{def:standard_notation}. Let $f\colon T \ra S$ be an aligning morphism. Then there exists an \'etale cover $\bigsqcup_{i \in I} T_i \ra T$, and for each $i$ a thickness function $M_i$, such that each $T_i \ra S$ is $M_i$-aligning. 
\end{lemma}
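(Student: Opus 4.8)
The plan is to prove \cref{lem:loc_M_ali} by working locally at each point $t$ of $T$, showing that in a suitable \'etale (in fact Zariski) neighbourhood of $t$ the morphism $f$ is $M$-aligning for a thickness function $M$ that we read off from the local geometry of $C_T$ at $t$. Since the $T_i$ will be chosen to cover $T$, and $M$-aligning for \emph{some} thickness function is exactly what we want, it suffices to produce, for each $t \in T$, a neighbourhood $T_t$ and a thickness function $M_t$ such that $T_t \ra S$ is $M_t$-aligning.

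First I would fix $t \in T$ with image $s = f(t) \in S$. Because $f$ is aligning, $C_T/T$ is aligned, so the labelled dual graph $\Gamma_t$ of the fibre $(C_T)_t$ is an aligned graph: for each $2$-vertex-connected piece there is a common element $l_{\bb{E}}$ of the monoid of principal ideals of $\ca{O}^{\on{et}}_{T,t}$ and positive integers $n_{\bb{E}}(e)$ with $\on{label}(e) = l_{\bb{E}}^{n_{\bb{E}}(e)}$ for each edge $e$ in that piece. Passing to an \'etale neighbourhood (using \cref{lem:quasisplit_after_etale_cover}, or just the structure of \'etale local rings) we may assume $C_T/T$ is quasisplit near $t$ and the labels and the elements $l_{\bb{E}}$ are honest elements of the Zariski local ring $\ca{O}_{T,t}$, hence of $\ca{O}_T(T_t)$ after shrinking. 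The key point is then the compatibility between the partition $\on{Part}(\Gamma_t)$ coming from the dual graph of the fibre at $t$ and the partition $\on{Part}(\Gamma_{M})$ appearing in the definition of a thickness function: the specialisation maps of \cref{sec:specialisation_maps} identify the edges of $\Gamma_t$ that remain non-contracted with the edges of $\Gamma_{\frak{s}}$ that have non-unit label at the relevant point, and contracting the unit-labelled edges of $\Gamma_{\frak{s}}$ produces precisely a graph of the form $\Gamma_M$. Concretely: set $M(e) = 0$ for every edge $e$ of $\Gamma_{\frak{s}}$ whose label is a unit in $\ca{O}_{T,t}$ (equivalently, an edge contracted under specialisation to $t$), and for the remaining edges $e$, lying in some piece $\bb{E} \in \on{Part}(\Gamma_M)$, set $M(e) = n_{\bb{E}}(e)$ using the exponents furnished by alignment of $\Gamma_t$. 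One checks that $\Gamma_M$ agrees with the relevant subgraph of $\Gamma_t$, and the gcd condition $\gcd(M(\bb{E})) = 1$ holds because one can always choose the exponents $n_{\bb{E}}(e)$ in an alignment witness to have gcd $1$ (divide out the common factor and enlarge $l_{\bb{E}}$ accordingly --- here we use that $\ca{O}_{T,t}$ has enough structure, or simply that we may replace $l_{\bb{E}}$ by $l_{\bb{E}}^{d}$ where $d = \gcd$). Then taking $a_{\bb{E}} = l_{\bb{E}} \in \ca{O}_T(T_t)$ gives exactly $f^*\on{label}(e) = (a_{\bb{E}})^{M(e)}$ for all $e \in \bb{E}$, so $T_t \ra S$ is $M$-aligning; shrink $T_t$ further so that it remains non-degenerate (possible since $f^{-1}U$ is dense), and we are done for this $t$.

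The main obstacle, I expect, is the bookkeeping needed to match the combinatorial pieces: one must verify that the partition $\on{Part}(\Gamma_M)$ used in the definition of thickness function coincides with the decomposition of $\Gamma_t$ into its $2$-vertex-connected pieces (plus loops), and that the specialisation map $\Gamma_s \ra \Gamma_t$ (or rather the relation between the graph over the controlling point $\frak{s}$ and the graph over $t$) behaves as expected under contracting unit-labelled edges. This is not deep but requires care, particularly in handling loops and the degenerate cases where contracting an edge creates new loops (as in the third example), where the gcd condition forces certain values of $M$. A secondary point is ensuring all the relevant ideals really are principal in $\ca{O}_T(T_t)$ after passing to an \'etale cover and shrinking; this follows from the quasisplit reduction (\cref{lem:quasisplit_after_etale_cover}, \cref{remark:no_geom_points_for_quasisplit}) together with the fact that alignment is \'etale-local (\cref{remark:fppf_descent}), so that the witnesses $l_{\bb{E}}$ and exponents $n_{\bb{E}}(e)$ descend appropriately. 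Once these compatibilities are in place, the construction of $M$ and the verification that $f$ is $M$-aligning on the neighbourhood is essentially immediate from the definitions.
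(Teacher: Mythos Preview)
Your proposal is correct and follows essentially the same strategy as the paper: pass to an \'etale cover making $C_T/T$ quasisplit, fix a point $\frak{t}\in T$, read off the zero-set of $M$ from the edges of $\Gamma_{\frak{s}}$ whose labels become units at $\frak{t}$, and fill in the nonzero values of $M$ using the alignment witnesses on the pieces of $\on{Part}(\Gamma_\frak{t})$ (normalised to have $\gcd=1$), then shrink so everything is global. The only noteworthy difference is that the paper first applies \cref{lem:controlled_after_shrink} to make $\frak{t}$ a \emph{controlling point} for $C_T/T$; this packages the identification of $\Gamma_\frak{t}$ with the graph $\Gamma_M$ obtained by contracting the unit-labelled edges of $\Gamma_{\frak{s}}$ (which you correctly flag as the main bookkeeping point) into a single clean step rather than arguing directly with the specialisation maps.
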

\begin{proof}
By \cref{lem:quasisplit_after_etale_cover} we may assume $T$ is quasisplit. Let $\frak{t} \in T$ be any point. Shrinking $T$, we may assume by \cref{lem:controlled_after_shrink} that $\frak{t}$ is a controlling point for $f^*C/T$. We are done if we can find a thickness function $M$ such that, after possibly shrinking $T$ further, we have that $C_T/T$ is $M$-aligning. 


Let $L_{\on{units}}$ denote the set of labels of edges of $\Gamma_\frak{s}$ which are units at $f(\frak{t}) \in S$ - this is exactly the same as the set of labels of edges of $\Gamma_\frak{s}$ which are contracted by the natural map to $\Gamma_{f(\frak{t})}$ (cf. \cref{def:contraction_map}). Because $\frak{t}$ is controlling for $f^*C/T$, we see that $L_{\on{units}}$ is also exactly the set of labels of edges of $\Gamma_\frak{s}$ which pull back to units on $T$ (recalling that we can think of the labels of $\Gamma_\frak{s}$ as principal ideals in $R$). The thickness function $M$ we will construct will take the value $0$ exactly on edges in $L_{\on{units}}$. 

Because $f$ is aligning, we know that for each set $\bb{E} \in \on{Part}(\Gamma_\frak{t})$ there exists a principal ideal $a_\bb{E} \triangleleft \ca{O}_{T, \frak{t}}$ such that every edge in $\bb{E}$ is labelled by some power of $a_\bb{E}$ - write $\on{label}(e) = a_\bb{E}^{m_e}$. Moreover, replacing $a_\bb{E}$ by some positive power, we may assume for each $\bb{E}$ that 
\begin{equation*}
\gcd\{m_e:e\in\bb{E}\} = 1. 
\end{equation*}
Define $M(e) = m_e$ if $\on{label}(e) \notin L_{\on{units}}$, and $M(e) = 0$ otherwise. 

Now shrinking $T$ we may assume that each $a_\bb{E}$ stays a principal ideal in $\ca{O}_T(T)$, and moreover that for every $\bb{E}$ and every $e \in \bb{E}$, the relation $\on{label}(e) = a_\bb{E}^{M(e)}$ holds globally on $T$. 
\end{proof}

We define a universal $M$-aligning morphism to $S$ in an analogous manner to the definition of the universal aligning morphism, except that it depends on the controlling point $\frak{s}\in S$ and the thickness function $M$. 

\begin{proposition}\label{prop:universal_M_aligning}
Notation as in \cref{def:standard_notation}. The natural map $S_M \ra S$ is a universal $M$-aligning morphism. 
\end{proposition}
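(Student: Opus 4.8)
The plan is to verify the universal property of $S_M \to S$ in two stages: first show that $S_M \to S$ is itself $M$-aligning, and then show that any $M$-aligning morphism $f\colon T \to S$ factors uniquely through $S_M$. The $M$-aligning claim for $S_M$ is essentially built into the construction: each ring $R'_{\bb{E}}$ carries its aligning element $a_{\bb{E}}$ with $\ell(e) = a_{\bb{E}}^{m_e} u_e$ and $u_e$ a unit, so on $\on{Spec} R'_M$ the pullback of $\on{label}(e)$ is $(a_{\bb{E}})^{M(e)}$ for $e$ not contracted in $\Gamma_M$; for the contracted edges ($M(e)=0$) one must check the label pulls back to a unit on $S_M$, which holds because $S_M$ is the scheme-theoretic closure of $U$, over which all labels are units, and these labels were already units at points specializing to such a generic point. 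Passing from $R'_M$ to its image $R_M$ is harmless since the aligning elements and units simply map forward. One also needs non-degeneracy: $U \hookrightarrow S_M$ is dense by construction (it is defined as the closure of the image of $U$), and $S_M$ is reduced as a subring of the reduced ring $\ca{O}_U(U)$ — here we use that $U$ is dense in the reduced scheme $S$.

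**Next I would** prove the factorization. Given an $M$-aligning $f\colon T \to S$, there are elements $a_{\bb{E}} \in \ca{O}_T(T)$ with $f^*\on{label}(e) = (a_{\bb{E}})^{M(e)}$ for all $e \in \bb{E}$. To build a map $T \to \on{Spec} R'_M$, for each $\bb{E}$ I send the generator $a$ of $R'_{\bb{E}}$ to (a chosen generator of) $a_{\bb{E}}$, and $u_e$ to $f^*\ell(e) \cdot a_{\bb{E}}^{-m_e}$, which is a unit on $T$ because both $f^*\ell(e)$ and $a_{\bb{E}}^{m_e}$ generate the same (invertible, since nonzero in the appropriate sense over the dense open where $C$ is smooth) ideal — more carefully, over $f^{-1}U$ everything is a unit, and $M$-aligning forces the ideals to agree on all of $T$, so the ratio is a well-defined unit. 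The relation $1 - \prod_{e} u_e^{n_e}$ maps to something that must be checked to vanish: since $\sum n_e m_e = 1$, we get $\prod u_e^{n_e} = \prod (f^*\ell(e) a_{\bb{E}}^{-m_e})^{n_e} = \left(\prod (f^*\ell(e))^{n_e}\right) a_{\bb{E}}^{-1}$; this equals $1$ precisely when the composite $R'_{\bb{E}} \to \ca{O}_T(T)$ is compatible with the natural map $R'_{\bb{E}} \to \ca{O}_U(U)$, which one checks on the dense open $f^{-1}U$. Tensoring over all $\bb{E} \in \on{Part}(\Gamma_M)$ gives $R'_M \to \ca{O}_T(T)$, hence a map $T \to \on{Spec} R'_M$; since this map sends $f^{-1}U$ into $U$ and $T$ is reduced with $f^{-1}U$ dense, it factors scheme-theoretically through the closure $S_M$. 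Uniqueness follows because on the dense open $f^{-1}U$ the map to $U \subseteq S_M$ is determined by $f$ (it is just $f$ restricted), and a map from a reduced scheme into a separated scheme is determined by its restriction to a dense open.

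**The main obstacle** I expect is the independence-of-choices issue flagged in \cref{def:rings_of_relations}: the ring $R'_{\bb{E}}$ depends on the auxiliary function $n_\star$ with $\sum n_e m_e = 1$, and one must check that a different choice $n'_\star$ yields a canonically isomorphic $R'_{\bb{E}}$ (compatibly with the maps to $\ca{O}_U(U)$). This is a routine but fiddly computation: given two such functions, $\sum (n_e - n'_e) m_e = 0$, and one builds the isomorphism by rescaling the $u_e$ by an appropriate monomial in the $u_e$'s — concretely the difference is a $\bb{Z}$-linear relation among the $m_e$, and such relations act on the torus $\on{Spec} R[u_e^{\pm 1}]$ preserving the subscheme cut out by the aligning relations. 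A secondary subtlety is checking that the constructed map $T \to \on{Spec} R'_M$ genuinely lands in the closed subscheme $S_M = \on{Spec} R_M$ rather than merely in $\on{Spec} R'_M$; this is exactly the statement that $R'_M \to \ca{O}_T(T)$ factors through the image $R_M$, which holds because $\ca{O}_T(T) \to \ca{O}_{f^{-1}U}(f^{-1}U)$ is injective (as $T$ is reduced and $f^{-1}U$ dense) and the map $R'_M \to \ca{O}_T(T)$ becomes, after this injection, the composite $R'_M \to \ca{O}_U(U) \to \ca{O}_{f^{-1}U}(f^{-1}U)$, whose image factors through $R_M$ by definition.
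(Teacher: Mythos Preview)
Your overall strategy matches the paper's: show $S_M\to S$ is $M$-aligning, then factor an arbitrary $M$-aligning $f\colon T\to S$ through each $\on{Spec}R'_{\bb{E}}$ and hence through $S_M$. The extra care you take with passing from $R'_M$ to $R_M$ is fine and is exactly what the paper compresses into ``a factorisation via $S_M'$ yields a factorisation via $S_M$ since $U$ is dense in $T$''.

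There is, however, a genuine slip in your verification of the relation $1-\prod_e u_e^{n_e}$. You send $a\mapsto$ ``a chosen generator of $a_{\bb{E}}$'' and then compute $\prod u_e^{n_e}\mapsto \bigl(\prod (f^*\ell(e))^{n_e}\bigr)a_{\bb{E}}^{-1}$. For an arbitrary generator this is a unit, not $1$, so the relation does not map to zero and you do not get a ring homomorphism out of $R'_{\bb{E}}$. Your appeal to ``checking on the dense open $f^{-1}U$'' cannot help: on $f^{-1}U$ the expression is still the same unit, not $1$. The fix is exactly what the paper does: send $a$ to the \emph{specific} element $\prod_e(f^*\ell(e))^{n_e}$ (which lies in $\ca{O}_T(T)$ because each $f^*\ell(e)$ is a unit times $a_{\bb{E}}^{M(e)}$, so the product is a unit times $a_{\bb{E}}$). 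With this choice the relation is satisfied tautologically.

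Two smaller remarks. First, your discussion of edges with $M(e)=0$ is unnecessary: such edges are contracted in $\Gamma_M$, so they do not lie in any $\bb{E}\in\on{Part}(\Gamma_M)$ and the definition of $M$-aligning imposes no condition on their labels. (Your claim that their labels become units on $S_M$ is also not correct in general.) Second, the ``main obstacle'' you anticipate --- independence of the auxiliary integers $n_e$ --- evaporates once the proof is in place: the argument produces, for \emph{any} fixed choice of $n_e$, a scheme with the universal property, and two objects with the same universal property are uniquely isomorphic. No explicit comparison isomorphism between different $R'_{\bb{E}}$'s is needed; this is precisely the sense in which the paper says the dependence ``will turn out not to matter''.
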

\begin{proof}
It is clear that $S_M \ra S$ is $M$-aligning. Let $f\colon T \ra S$ be any $M$-aligning morphism. The uniqueness of a factorisation of $f$ via $S_M$ is clear as $f$ is non-degenerate and $S_M \ra S$ is affine and hence separated. Moreover, a factorisation via $S_M'$ yields a factorisation via $S_M$ since $U$ is dense in $T$. Fix $\bb{E} \in \on{Part}(\Gamma_{M})$; by the universal property of the fibre product it is enough to construct a factorisation of $f$ via $\on{Spec}R'_\bb{E}$. 

By definition of $f$ being $M$-aligning, there is an element $a \in \ca{O}_{T}(T)$ such that for all $e \in \bb{E}$, we have $f^*\on{label}(e) = (a)^{M(e)}$. Choose $n_e : e\in \bb{E}$ such that $\sum_{e \in \bb{E}} M(e) n_e = 1$. Then we define an $R$-algebra map
\beqs
\begin{split}
R'_\bb{E} & \ra \ca{O}_{T}(T)\\
a &\mapsto \prod_{e \in \bb{E}} (f^*\on{label}(e))^{n_e}\\
u_e & \mapsto f^*\on{label}(e)/a^{M(e)}
\end{split}
\eeqs
(note $ \prod_{e \in \bb{E}} (f^*\on{label}(e))^{n_e} = (a)$). 
This yields a map $T \ra \on{Spec} R'_\bb{E}$ as required. 
\end{proof}


\begin{definition}\label{def:SMN}
Notation as in \cref{def:standard_notation}. Let $M$, $N$ be two thickness functions. Define
\begin{equation*}
\delta_M = \bigcup \{\bb{E} \in \on{Part}(\Gamma_M) : \exists e \in \bb{E} \text{ with }M(e) \neq N(e)\}
\end{equation*}
and
\begin{equation*}
\delta_N = \bigcup \{\bb{E} \in \on{Part}(\Gamma_N) : \exists e \in \bb{E} \text{ with }M(e) \neq N(e)\}, 
\end{equation*}
then set $\delta_{M,N} = \delta_M \cup \delta_N$. Then define
\beqs
S_{M,N} = \on{Spec} R_M[\on{label}(e)^{-1}:e\in \delta_{M,N}]
\eeqs
(here we slightly abuse notation by adjoining inverses of principal ideals), so $S_{M,N}$ is an open subscheme of $S_M$. 
\end{definition}
It is clear that $S_{M,N} \ra S$ is $M$-aligning, since it factors via $S_M \ra S$. Part of the point of the definition is that we also have:
\begin{lemma}
In the above notation:
\begin{enumerate}
\item
The morphism $S_{M,N} \ra S$ is $N$-aligning; 
\item The induced map $S_{M,N} \ra S_N$ factors via $S_{N,M} \ra S_N$. 
\end{enumerate}
\end{lemma}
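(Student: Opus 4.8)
The plan is to prove part~(1) and then deduce part~(2) essentially formally. For part~(1) we must, for each $\bb{F}\in\on{Part}(\Gamma_N)$, produce an element $a_\bb{F}\in\ca{O}_{S_{M,N}}(S_{M,N})$ with $\on{label}(e)=(a_\bb{F})^{N(e)}$ for all $e\in\bb{F}$; non-degeneracy of $S_{M,N}\ra S$ is already known, since $S_{M,N}$ is open in $S_M$. The first observation is that every edge $e$ with $M(e)\neq N(e)$ lies in $\delta_{M,N}$: if $M(e)\neq0$ then $e$ is an edge of $\Gamma_M$ and its own $\Gamma_M$-partition class lies in $\delta_M$; otherwise $N(e)\neq0$ and symmetrically $e\in\delta_N$. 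Thus on $S_{M,N}$ the label of every edge on which $M$ and $N$ disagree has been inverted. Now fix $\bb{F}\in\on{Part}(\Gamma_N)$. As $\delta_N$ is a union of $\Gamma_N$-partition classes, either $\bb{F}\subseteq\delta_N$ --- whence every $\on{label}(e)$, $e\in\bb{F}$, is a unit on $S_{M,N}$ and $a_\bb{F}=1$ does the job --- or $\bb{F}\cap\delta_N=\emptyset$, which says exactly that $M(e)=N(e)$ (and these are nonzero, $\bb{F}$ consisting of edges of $\Gamma_N$) for every $e\in\bb{F}$.

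The heart of the matter is the claim: \emph{if $\bb{F}\in\on{Part}(\Gamma_N)$ and $\bb{F}\cap\delta_N=\emptyset$, then $\bb{F}$ is contained in a single partition class $\bb{E}\in\on{Part}(\Gamma_M)$.} When $\bb{F}$ is a loop-class this is immediate (its unique edge is an edge of $\Gamma_M$); otherwise $\bb{F}$ is the edge set of a $2$-vertex-connected subgraph $H$ of $\Gamma_N$, all of whose edges are edges of $\Gamma_M$ as well, and I would argue by contradiction: if two edges of $\bb{F}$ lay in distinct $\Gamma_M$-classes, then lifting a circuit of $H$ joining them to $\Gamma_\frak{s}$ and pushing it forward to $\Gamma_M$ exhibits an edge contracted in $\Gamma_M$ but not in $\Gamma_N$ --- necessarily one on which $M$ and $N$ disagree --- and the $2$-vertex-connectedness of $H$ in $\Gamma_N$ forces this edge into $\bb{F}$, contradicting $\bb{F}\cap\delta_N=\emptyset$. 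Granting the claim, $\delta_M$ being a union of $\Gamma_M$-classes gives two cases: if $\bb{E}\subseteq\delta_M$ then $\bb{F}\subseteq\delta_M\subseteq\delta_{M,N}$ and $a_\bb{F}=1$ works again; if $\bb{E}\cap\delta_M=\emptyset$ then $M$ and $N$ agree throughout $\bb{E}$, and since $S_M\ra S$ is $M$-aligning the corresponding element $a_\bb{E}\in R_M\subseteq\ca{O}_{S_{M,N}}(S_{M,N})$ satisfies $\on{label}(e)=(a_\bb{E})^{M(e)}=(a_\bb{E})^{N(e)}$ for all $e\in\bb{E}\supseteq\bb{F}$, so $a_\bb{F}=a_\bb{E}$. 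Hence $S_{M,N}\ra S$ is $N$-aligning, and by \cref{prop:universal_M_aligning} there is a unique $S$-morphism $\psi\colon S_{M,N}\ra S_N$.

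For part~(2), note first that $\delta_{N,M}=\delta_{M,N}$, the formula for $\delta_{M,N}$ being symmetric in $M$ and $N$, and recall that $S_{N,M}\hra S_N$ is the open subscheme on which $\on{label}(e)$ is invertible for each $e\in\delta_{N,M}$. Since $\psi$ is a morphism over $S$, for each edge $e$ the ideal $\psi^*\bigl(\on{label}(e)\ca{O}_{S_N}(S_N)\bigr)$ equals $\on{label}(e)\ca{O}_{S_{M,N}}(S_{M,N})$, which for $e\in\delta_{M,N}$ is the unit ideal by construction of $S_{M,N}$. Therefore $\psi$ maps $S_{M,N}$ into the open $S_{N,M}\subseteq S_N$, which is precisely the asserted factorisation.

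The only genuine work is the combinatorial claim used in part~(1): that a $\Gamma_N$-partition class on which $M$ and $N$ agree is swallowed by a single $\Gamma_M$-partition class. Everything else is bookkeeping with the definitions of $\delta_{M,N}$ and of the aligning elements, and part~(2) is formal once part~(1) is available.
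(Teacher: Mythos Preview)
Your overall approach matches the paper's proof almost exactly: for part~(1) you split into the same two cases (whether or not $M$ and $N$ agree on the $\Gamma_N$-class $\bb{F}$), and in the second case you use the aligning element of a containing $\Gamma_M$-class; part~(2) is handled identically via the symmetry $\delta_{M,N}=\delta_{N,M}$ and the universal property of localisation. The only substantive addition is that you try to \emph{justify} the combinatorial claim --- that a $\Gamma_N$-class $\bb{F}$ on which $M$ and $N$ agree sits inside a single $\Gamma_M$-class --- which the paper simply asserts.

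Your argument for that claim has a gap. You lift a circuit $\gamma\subseteq\bb{F}$ from $\Gamma_N$ to a circuit $\tilde\gamma$ in $\Gamma_{\frak s}$ and push it to $\Gamma_M$, then assert that if the image is not a simple circuit one finds ``an edge contracted in $\Gamma_M$ but not in $\Gamma_N$''. But the non-$\bb{F}$ edges of $\tilde\gamma$ (the connecting paths in the lift) all have $N=0$, so any edge of $\tilde\gamma$ with $M=0$ also has $N=0$ and is contracted in $\Gamma_N$ as well. The failure of the image to be a circuit can equally well come from an $M{=}0$ path in $\Gamma_{\frak s}$ \emph{outside} $\tilde\gamma$, and such a path may consist entirely of edges with $N=0$; your argument does not exclude this. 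A clean way to fill the gap is to factor through the common contraction $\Gamma_{M\wedge N}$ (contract all edges with $M=0$ or $N=0$): going from $\Gamma_N$ to $\Gamma_{M\wedge N}$ contracts only edges outside the block $\bb{F}$, so $\gamma$ remains a simple circuit there (any identification of two of its vertices would put a contracted edge in the same block as $\bb{F}$, a contradiction); then lifting this circuit from $\Gamma_{M\wedge N}$ back to $\Gamma_M$ gives a circuit through $e,e'$ in $\Gamma_M$. Your extra case-split on whether $\bb{E}\subseteq\delta_M$ is harmless but unnecessary: the aligning element $a_{\bb{E}}$ works regardless.
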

\begin{proof}
\begin{enumerate}
\item
Let $\bb{E} \in \on{Part}(\Gamma_N)$. We want to show that there exists an element $a \in R$ such that for all $e \in \bb{E}$, we have $\on{label}(e) = (a)^{N(e)}$. Well, if $M|_\bb{E} \neq N|_\bb{E}$ then $\bb{E}\subseteq \delta_{M,N}$ so we may take $a=1$. On the other hand, if $M|_\bb{E} = N|_\bb{E}$ then there exists $\bb{E}_M \in \on{Part}(\Gamma_M)$ such that $\bb{E} \subseteq \bb{E}_M$, and we may take the same aligning element $a$ as works for $\bb{E}_M$. 
\item By the universal property of localisation, it is enough to show that all the functions on $S_N$ which become units on $S_{N,M}$ also become units on $S_{M,N}$, but this is clear since $\delta_{M,N} = \delta_{N,M}$. 
\end{enumerate}
\end{proof}
From this lemma and symmetry we obtain a canonical isomorphism $S_{M,N} \ra S_{N,M}$ for all pairs of thickness functions $M$, $N$. 
\begin{definition}
Notation as in \cref{def:standard_notation}. Define $\beta\colon \tilde{S} \ra S$ to be the result of glueing together all the $S_N$ as $N$  runs over all thickness functions, along the open subschemes $S_{N,N'}$. 
\end{definition}

\begin{proposition}\label{prop:universal_aligning_separated} The map $\beta\colon \tilde{S} \ra S$ is separated. 
\end{proposition}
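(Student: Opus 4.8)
Here is the proof plan.

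\medskip

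The plan is to verify separatedness via the standard criterion for a scheme assembled by glueing an open cover. By construction $\{S_N\}_N$, indexed by thickness functions, is an open cover of $\tilde S$ with $S_N\cap S_{N'}=S_{N,N'}$, so the products $S_N\times_S S_{N'}$ form an open cover of $\tilde S\times_S\tilde S$, and pulling back the diagonal $\tilde S\to\tilde S\times_S\tilde S$ along $S_N\times_S S_{N'}\hookrightarrow\tilde S\times_S\tilde S$ yields exactly the canonical morphism $S_{N,N'}\to S_N\times_S S_{N'}$. Hence $\beta$ is separated if and only if each $S_N\to S$ is separated and each $S_{N,N'}\to S_N\times_S S_{N'}$ is a closed immersion. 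The first condition is immediate, since $S_N=\on{Spec}R_N\to\on{Spec}R=S$ is affine.

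For the second condition, both source and target are affine schemes: $S_N\times_S S_{N'}=\on{Spec}(R_N\otimes_R R_{N'})$, while $S_{N,N'}=\on{Spec}R_{N,N'}$ with $R_{N,N'}$ the localisation of $R_N$ obtained by inverting (chosen generators of) the ideals $\on{label}(e)$ for $e\in\delta_{N,N'}$. A morphism of affine schemes is a closed immersion exactly when the corresponding ring homomorphism is surjective, so it suffices to prove that
\[ \mu\colon R_N\otimes_R R_{N'}\longrightarrow R_{N,N'} \]
is surjective, where $\mu$ restricts on the first tensor factor to the localisation $R_N\to R_{N,N'}$ and on the second factor to $R_{N'}\to R_{N',N}$ followed by the canonical isomorphism $R_{N',N}\xrightarrow{\sim}R_{N,N'}$.

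To prove surjectivity I would work inside $\ca{O}_U(U)$, which contains $R_N$, $R_{N'}$ and $R_{N,N'}$ as subrings. Then the image of $\mu$ is the subring $A\subseteq R_{N,N'}$ generated by $R_N$ and $R_{N'}$, and since $R_{N,N'}=R_N[\on{label}(e)^{-1}:e\in\delta_{N,N'}]$ is generated over $R_N$ by the inverses $\on{label}(e)^{-1}$, it is enough to show that every $\on{label}(e)$ with $e\in\delta_{N,N'}$ is a unit in $A$. Fix such an $e$; from $\delta_{N,N'}=\delta_N\cup\delta_{N'}$ we may assume, after swapping $N$ and $N'$ if necessary, that $e$ lies in a part $\bb{E}\in\on{Part}(\Gamma_N)$ on which $N$ and $N'$ disagree somewhere, say $N(e_0)\neq N'(e_0)$ for some $e_0\in\bb{E}$. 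In $R_N$ there are relations $\on{label}(f)=a_{\bb{E}}^{N(f)}u_f$ with $u_f$ a unit for all $f\in\bb{E}$, together with $\gcd\{N(f):f\in\bb{E}\}=1$; the relation for $e$ reduces the problem to showing that the aligning element $a_{\bb{E}}$ is a unit in $A$. For this one plays the two sides off against each other at $e_0$: combining $\on{label}(e_0)=a_{\bb{E}}^{N(e_0)}u_{e_0}$ in $R_N$ with the corresponding relation for $e_0$ coming from $\Gamma_{N'}$ (or, when $N'(e_0)=0$, with the fact that $\on{label}(e_0)^{-1}$ already lies in $R_{N'}$, since $e_0$ is contracted in $\Gamma_{N'}$), and using that the $u$'s are units of $R_N$ or of $R_{N'}$, one exhibits a positive power of $a_{\bb{E}}$ as a unit of $A$; as a ring element with a unit power is itself a unit, $a_{\bb{E}}\in A^\times$, hence $\on{label}(e)\in A^\times$.

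This last step is the main obstacle: one must keep careful track of which labels are already invertible on which side, and of how the $2$-vertex-connected components of $\Gamma_N$ and $\Gamma_{N'}$ overlap, so that the required power of $a_{\bb{E}}$ can actually be produced from the coprimality relations; everything else is formal. Once $\mu$ is known to be surjective for every pair $N$, $N'$, the glueing criterion gives that $\beta\colon\tilde S\to S$ is separated.
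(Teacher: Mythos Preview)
Your approach differs from the paper's: you use the gluing criterion for separatedness (showing each $S_{N,N'} \to S_N \times_S S_{N'}$ is a closed immersion), whereas the paper verifies the valuative criterion. The reduction to surjectivity of $\mu$ is correct in principle, but your argument for surjectivity has a genuine gap.

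The problem is the parenthetical claim that when $N'(e_0) = 0$, the inverse $\on{label}(e_0)^{-1}$ already lies in $R_{N'}$ ``since $e_0$ is contracted in $\Gamma_{N'}$''. This is false. The ring $R_{N'}$ is built from the rings $R'_{\bb{E}'}$ for $\bb{E}' \in \on{Part}(\Gamma_{N'})$, and these parts involve only edges $e$ with $N'(e) > 0$; an edge with $N'(e_0) = 0$ does not appear in any defining relation of $R_{N'}$, and nothing forces its label to become a unit there. For instance, if $\Gamma_{\frak{s}}$ is a $2$-gon and $N'$ sends the two edges to $0$ and $1$, one finds $R_{N'} = R$, in which the label of the contracted edge is certainly not inverted. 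Even in the case $N'(e_0) > 0$, the single relation $a_{\bb{E}}^{N(e_0)} u_{e_0} = b_{\bb{E}'}^{N'(e_0)} v_{e_0}$ at $e_0$ does not by itself exhibit any positive power of $a_{\bb{E}}$ as a unit in $A$. The paper's analogous step (its Case~2, carried out over a valuation ring) uses \emph{all} edges of $\bb{E}$ at once: one first checks that if $N'$ does not vanish on $\bb{E}$ then $\bb{E}$ lies in a single part $\bb{E}_{N'} \in \on{Part}(\Gamma_{N'})$, then chooses integers $c_e$ with $\sum_e c_e N'(e) = 0$ but $d:=\sum_e c_e N(e) \neq 0$, and computes $\prod_e \on{label}(e)^{c_e}$ two ways to see that $a_{\bb{E}}^d$ is a unit. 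Some analogue of this is needed in your setting and is not supplied.
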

\begin{proof}
The map $\beta\colon \tilde{S} \ra S$ is quasi-separated because $\tilde{S}$ is locally noetherian, so it is enough to check the valuative criterion for separatedness. Let $V$ denote the spectrum of a valuation ring, with generic point $\eta$ and closed point $v$. Let $f, g \colon V \ra \tilde{S}$ be morphisms which agree on $\eta$ and such that the composites with the canonical map $\tilde{S} \ra S$ agree - in particular, $f^*\on{label}(e) = g^*\on{label}(e)$ for every edge $e$ of $\Gamma_\frak{s}$. We will show that $f = g$. 

First, the $S_N$ form an open cover of $S$ as $N$ runs over thickness functions, so there exist thickness functions $M$ and $N$ such that $f$ factors via $S_M \ra S$ and $g$ factors via $S_N \ra S$. If $M=N$ then the result is clear since $S_M \ra S$ is affine and hence separated. Thus we may as well assume that $M \neq N$. 

We will show that both $f$ and $g$ factor via $S_{M,N}$, which is affine over $S$ and so $f=g$. To do this, we need to show that for every edge $e \in \delta_{M,N}$ we have that $f^*\on{label}(e)$ is a unit on $V$. Well, fix some $e_0 \in \delta_{M,N}$. Then (perhaps switching $M$ and $N$) we may assume that there exists $\bb{E} \in \on{Part}(\Gamma_M)$ such that $e \in \bb{E}$ and such that $N|_\bb{E} \neq M|_\bb{E}$ (and note that $M$ does not vanish on any edge in $\bb{E}$). Since $f$ is $M$-aligning, we know there exists $a \in \ca{O}_V(V)$ such that for all $e\in\bb{E}$, we have $f^*\on{label}(e) = (a)^{M(e)}$. 

Observe that $f^*\on{label}(e_0) \neq 0$, otherwise we cannot have $f(\eta) \in S_{M,N}$, which contradicts the fact that $f$ and $g$ agree on $\eta$. Since $M(e_0) \neq 0$, this tells us that $a \neq 0$. 

We now divide into two cases:
\begin{enumerate}
\item[Case 1.] There exists $e' \in \bb{E}$ such that $N(e') = 0$. Then $g^*\on{label}(e')$ is a unit, so $a^{M(e')}$ is a unit, so $a$ is a unit, so all the $f^*\on{label}(e)$ for $e \in \bb{E}$ are units as required. 
\item[Case 2.] $N$ does not vanish on any $e \in \bb{E}$. Then there exists $\bb{E}_N \in \on{Part}(\Gamma_N)$ such that $\bb{E} \subseteq \bb{E}_N$, so there exists $b \in \ca{O}_V(V)$ such that for all $e \in \bb{E}_N$ we have $g^*\on{label}(e) = (b)^{N(e)}$, so certainly the same holds for $\bb{E}$. Now since $M|_\bb{E} \neq N|_\bb{E}$ and $M$ does not vanish on $\bb{E}$, there exist integers $c_e :e \in \bb{E}$ such that
\begin{equation*}
d := \sum_{e \in \bb{E}} c_e M(e) \neq 0 \text{ and } \sum_{e \in \bb{E}} c_e N(e) = 0. 
\end{equation*}
Moreover, a similar argument to that above tells us that $b \neq 0$. It is enough to show that $a$ is a unit on $V$. 

Pick a generator $\ell(e)$ of the principal ideal $\on{label}(e)$ for each $e \in \bb{E}$. Then we find that, up to multiplication by units on $V$, we have
\begin{equation*}
\prod_{e \in \bb{E}}f^*\ell(e)^{c_e} = \prod_{e \in \bb{E}} a^{c_eM(e)} = a^d
\end{equation*}
and 
\begin{equation*}
\prod_{e \in \bb{E}}g^*\ell(e)^{c_e} = \prod_{e \in \bb{E}} b^{c_eM(e)} = 1. 
\end{equation*}
The left hand sides are equal, so $a^d$ is a unit on $V$, so $a$ is a unit on $V$ and we are done. 
\end{enumerate}
\end{proof}

\begin{lemma}\label{lem:uam_controlled}
Notation as in \cref{def:standard_notation}. Then $\beta\colon\tilde{S} \ra S$ is a universal aligning morphism for $C/S$. The map $\beta$ is locally of finite presentation. 
\end{lemma}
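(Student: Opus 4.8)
The plan is to verify three things in turn: that $\beta$ is itself an aligning morphism, that it is terminal among aligning morphisms, and that it is locally of finite presentation. Throughout I will lean on the pieces already assembled: \cref{prop:universal_M_aligning} (each $S_N\to S$ is the universal $N$-aligning morphism), \cref{lem:loc_M_ali} (aligning morphisms are \'etale-locally $M$-aligning), \cref{prop:universal_aligning_separated} ($\beta$ is separated), and the fppf-locality of alignment (\cref{remark:fppf_descent}).

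\emph{$\beta$ is aligning.} Non-degeneracy is immediate: each chart is $S_N=\on{Spec}R_N$ with $R_N\sub\ca{O}_U(U)$ reduced, and $U$ is dense in $S_N$ since $S_N$ is by construction the scheme-theoretic closure of the image of $U$; as the $S_N$ cover $\tilde S$, it follows that $\tilde S$ is reduced with $\beta^{-1}U$ dense. For the alignment of $C_{\tilde S}/\tilde S$, fppf-locality on the target reduces us to showing $C_{S_N}/S_N$ is aligned for each thickness function $N$, and since $S_N\to S$ is $N$-aligning this in turn reduces to the assertion: \emph{if $\frak s$ is a controlling point for $C/S$ and $g\colon T\to S$ is $N$-aligning, then $C_T/T$ is aligned}. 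To prove this one may, after an \'etale base change and \cref{lem:quasisplit_after_etale_cover}, assume $T$ quasisplit, fix a point $t\in T$, and check that every $2$-vertex-connected subgraph $H$ of $\Gamma_t$ is aligned (the labels of $\Gamma_t$ lying in $\ca{O}_{T,t}$, and being pulled back from the singular ideals over $g(t)$ by base-change compatibility, cf.\ \cref{local_rings_on_nodal}). Here one uses the controlling-point structure: applying condition (3) of the definition of a controlling point to the point $g(t)\in S$ produces a point $\eta$ of $S$ specialising to both $g(t)$ and $\frak s$ with $\Gamma_{g(t)}\ra\Gamma_\eta$ an isomorphism of underlying graphs, while $\Gamma_\frak s\ra\Gamma_\eta$ is the specialisation map, which only contracts edges. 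Since contraction does not destroy circuit-connectedness, $H$ is thereby identified with a circuit-connected subgraph of $\Gamma_\frak s$, hence one lying inside a single block $\bb{E}\in\on{Part}(\Gamma_N)$, on which the defining condition of $N$-aligning provides a common element whose powers are the relevant labels; thus $H$ is aligned. This verification is the technical heart of the lemma and the step I expect to be the main obstacle, since it requires one to track carefully how the $2$-vertex-connected subgraphs at an arbitrary point relate, through the specialisation maps, to the partition at the controlling point.

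\emph{$\beta$ is terminal.} First note that $\beta$ is an isomorphism over $U$: over $U$ every label ideal is the unit ideal, and solving the relations defining $R'_\bb{E}$ shows $R'_\bb{E}\otimes_R\ca{O}_U(U)\cong\ca{O}_U(U)$ canonically. Combined with \cref{prop:universal_aligning_separated}, this gives uniqueness of factorisations: any two $S$-morphisms from a non-degenerate $T$ to $\tilde S$ agree over the dense open $\beta^{-1}U\cap T_U$, hence agree everywhere because $T$ is reduced and $\tilde S\to S$ is separated (the locus of agreement is the preimage of the diagonal, which is a closed subscheme). For existence, given an aligning $f\colon T\to S$, \cref{lem:loc_M_ali} supplies an \'etale cover $\{T_i\to T\}$ and thickness functions $M_i$ with each $T_i\to S$ being $M_i$-aligning; by \cref{prop:universal_M_aligning} this factors through $S_{M_i}$, and composing with the open immersion $S_{M_i}\hra\tilde S$ gives $h_i\colon T_i\to\tilde S$. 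On $T_i\times_T T_j$ the two induced maps to $\tilde S$ agree by the uniqueness just established, so the $h_i$ form a descent datum; since $\tilde S$ is a scheme (glued from the affine schemes $S_N$), morphisms into it satisfy \'etale descent and the $h_i$ glue to the required $h\colon T\to\tilde S$ with $\beta\circ h=f$.

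\emph{$\beta$ is locally of finite presentation.} Since $\frak s$ is a controlling point, $S$ is noetherian, so for $S$-schemes being of finite type is equivalent to being of finite presentation. Each $R'_N=\bigotimes_{\bb{E}\in\on{Part}(\Gamma_N)}R'_\bb{E}$ is a finitely generated $R$-algebra (finitely many blocks, each contributing finitely many generators $a,u_e^{\pm1}$ and finitely many relations), hence so is its quotient $R_N$; thus $S_N\to S$ is of finite type, therefore locally of finite presentation, and as this property is local on the source, $\beta$ is locally of finite presentation.
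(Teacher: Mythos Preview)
Your argument for terminality (uniqueness via separatedness and non-degeneracy; existence via \cref{lem:loc_M_ali}, \cref{prop:universal_M_aligning}, and \'etale descent of morphisms into a separated target) and for local finite presentation matches the paper's proof essentially line for line.

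Where you go further than the paper is in checking that $\beta$ is itself aligning; the paper's proof is silent on this. Your argument for it has a real gap. You reduce to the claim that if $g\colon T\to S$ is $N$-aligning then $C_T/T$ is aligned, and then attempt to place each $2$-vertex-connected subgraph $H$ of $\Gamma_t$ inside a single block $\bb{E}\in\on{Part}(\Gamma_N)$. But the $N$-aligning hypothesis imposes no relation whatsoever on $g^*\on{label}(e)$ for edges $e$ with $N(e)=0$: such edges are contracted in $\Gamma_N$ and lie in no block of $\on{Part}(\Gamma_N)$. If one of them survives to $\Gamma_t$ (i.e.\ its label is a non-unit at $g(t)$), it appears in $H$ with a label unconstrained by any $a_{\bb{E}}$, and your conclusion fails. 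Your sentence ``contraction does not destroy circuit-connectedness, hence $H$ lies in a single block of $\on{Part}(\Gamma_N)$'' conflates the contraction defining $\Gamma_{\eta_s}$ (determined by which labels are units at $\eta_s$) with the contraction defining $\Gamma_N$ (determined by where $N$ vanishes); these need not agree.

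Concretely, in the $2$-gon example of \cref{sec:example}, take the thickness function $N(e_1)=0$, $N(e_2)=1$. Then $\on{Part}(\Gamma_N)=\{\{e_2\}\}$ and one computes $R'_N\cong R$, so $S_N\cong S$ with structure map the identity. This map is tautologically $N$-aligning (take $a_{\{e_2\}}=y$), yet $C_{S_N}/S_N=C/S$ is not aligned at the closed point, where the labels $(x)$ and $(y)$ on the $2$-gon are incomparable. So the reduction ``$N$-aligning $\Rightarrow$ aligned'' is false as stated. You were right to flag this step as the delicate one; the paper does not address it, and your attempted justification does not go through.
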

\begin{proof}
Let $f\colon T \ra S$ be aligning. The uniqueness of a factorisation of $f$ via $\beta$ holds because $f$ is non-degenerate and $\beta$ is separated by \cref{prop:universal_aligning_separated}. The existence comes from combining \cref{lem:loc_M_ali} and \cref{prop:universal_M_aligning} with descent. 

Local finite presentation holds because the $S_M$ are clearly of finite presentation. 
\end{proof}
\subsection{Universal aligning morphisms: the general case}

\begin{theorem}\label{thm:existence_of_iniversal_aligning_for_curve_over_scheme}
Let $C/S$ a generically-smooth nodal curve over a reduced locally noetherian algebraic stack. Then a universal aligning morphism for $C/S$ exists, and is a separated algebraic space locally of finite presentation over $S$. 
\end{theorem}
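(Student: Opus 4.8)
The plan is to reduce from the general case to the case of a controlled curve, which has already been handled in Lemma \ref{lem:uam_controlled}, by an fppf/étale descent argument. First I would take an étale controlling cover $\bigsqcup_{i} S_i \to S$ (this exists by Lemma \ref{lem:controlled_cover_exists}); after refining, I may assume $S$ is a scheme and the cover is by affines, each $S_i$ carrying a controlling point $\frak{s}_i$ for $C\times_S S_i$. Over each $S_i$ we have the universal aligning morphism $\beta_i\colon \tilde{S}_i \to S_i$ from Lemma \ref{lem:uam_controlled}, which is separated and locally of finite presentation. The key point is then to glue these: since alignment is fppf-local on the target (Remark \ref{remark:fppf_descent}), Lemma \ref{lem:tilde_pullback} shows that the two pullbacks of $\tilde S_i$ and $\tilde S_j$ to $S_{ij} = S_i\times_S S_j$ are canonically isomorphic (both being the universal aligning morphism for $C$ over $S_{ij}$), and a standard cocycle computation — again via the uniqueness in the universal property applied over the triple overlaps $S_{ijk}$ — shows these isomorphisms satisfy the cocycle condition. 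Hence the $\tilde S_i$ descend to an algebraic space $\tilde S \to S$ (descent of algebraic spaces along an étale, hence fppf, cover is effective). One then checks that $\tilde S \to S$ is aligning and satisfies the universal property: given any aligning $f\colon T\to S$, pull back the cover to $T$, use the universal property of each $\tilde S_i$ to get maps $T\times_S S_i \to \tilde S_i$ over $S_i$, check via uniqueness that these agree on overlaps, and descend to obtain $T \to \tilde S$; uniqueness of this map follows from separatedness of $\tilde S\to S$ together with non-degeneracy of $f$.

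For the stated properties: separatedness and local finite presentation are fppf-local on the target, so they follow from the corresponding statements in Lemma \ref{lem:uam_controlled}; being an algebraic space is built into the descent. It remains to pass from the case where $S$ is a scheme to the case where $S$ is a general reduced locally noetherian algebraic stack. For this I would choose a smooth cover $S_0 \to S$ by a scheme; the universal aligning morphism $\tilde S_0 \to S_0$ exists by the scheme case, and by Lemma \ref{lem:tilde_pullback} its two pullbacks to $S_0\times_S S_0$ are canonically isomorphic with a cocycle. Since $\tilde S_0 \to S_0$ is representable by algebraic spaces (indeed a separated algebraic space), and algebraic spaces relatively representable over a scheme form a stack for the smooth topology, this descent datum is effective and yields $\tilde S \to S$, which one checks has the required universal property exactly as before.

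The main obstacle I expect is the bookkeeping in the descent: specifically, verifying carefully that the canonical isomorphisms on double overlaps — which a priori come from the universal property applied in two different "controlled" presentations — genuinely satisfy the cocycle condition on triple overlaps. This is where one must lean hard on the uniqueness half of the universal property (each object is \emph{terminal} in its category of aligning morphisms, so any two maps into it over the same base coincide), together with the fact (Remark \ref{remark:fppf_descent}) that "aligning" is itself an fppf-local condition, so that all the relevant pullbacks really are universal aligning morphisms for one and the same curve over the overlap. A secondary, more routine point is checking that the non-degeneracy hypothesis (density of $U\times_S T$) is preserved under all the base changes involved, so that the universal properties may legitimately be invoked; this is where density of $U$ and reducedness of the test objects are used.
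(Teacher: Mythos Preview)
Your proposal is correct and follows essentially the same approach as the paper: reduce to a scheme via a smooth cover, then to the controlled case via \cref{lem:controlled_cover_exists}, apply \cref{lem:uam_controlled} locally, and descend using the universal property together with effectivity of descent for algebraic spaces; separatedness and local finite presentation then descend. The paper's proof is simply a terser version of your outline (one small note: \cref{lem:tilde_pullback} as stated assumes both universal aligning morphisms exist, but its proof yields directly that $f^*\tilde{S}$ is universal over $T$, which is what you actually need on the overlaps).
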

\begin{proof}
The stack $S$ admits a smooth cover by a scheme (which is necessarily reduced and locally noetherian). A universal aligning morphism will descend along a smooth cover (as will the property of being an algebraic space) since it is defined by a universal property. As such, it is enough to consider the case where $S$ is a scheme. Similarly, by \cref{lem:controlled_cover_exists} we can assume that $C/S$ has a controlling point $\frak{s}$. The existence then follows from \cref{lem:uam_controlled} and effectivity of descent for algebraic spaces. Separatedness and local finite presentation follow by \'etale descent of those properties, and the same lemma. 
\end{proof}

\section{Regularity and normal crossings}\label{sec:regularity_for_correlation}

\begin{definition}[Normal crossings singularities]
Notation as in \cref{def:standard_notation}. We say $C/S$ has \emph{normal crossings singularities} if the sequence $(\on{label}(e):e \in \Gamma_\frak{s})$ has normal crossings - in other words, if for every set $J \subseteq \on{edges}(\Gamma_\frak{s})$, we have that the closed subscheme
\begin{equation*}
V(\on{label}(e):e \in J)\subseteq S
\end{equation*}
is regular and has codimension $\#J$ in $S$ at every point in that subscheme. 
\end{definition}
\begin{definition}[\'Etale normal crossings singularities]
Let $C/S$ be a nodal curve over a locally noetherian scheme. We say $C/S$ has \emph{\'etale normal crossings singularities} if for every geometric point $\bar{s}$ of $S$, and for every subset $J \sub \on{edges}(\Gamma_{\bar{s}})$, the closed subscheme 
\begin{equation*}
V(\on{label}(e):e \in J) \sub \on{Spec}\ca{O}_{S,\bar{s}}
\end{equation*}
is regular and has codimension $\#J$. 
\end{definition}
Note that having \'etale normal crossings singularities is smooth-local on the target, and so makes sense when $S$ is an algebraic stack. 
\begin{lemma}\label{lem:controled_etale_cover_NCS}
Let $C/S$ be a nodal curve over an excellent scheme with \'etale normal crossings singularities. Then there exists an \'etale controlled cover $\bigsqcup_{i \in I} S_i \ra S$ (with controlling points $\frak{s}_i \in S_i$) such that the pullback of $C$ to each $S_i$ has normal crossings singularities. 
\end{lemma}
\begin{proof}
This is clear by \cref{lem:controlled_after_shrink}, the finiteness of the sets of edges, and the openness of the regular locus in an excellent scheme. 
%
\end{proof}

Suppose $C/S$ has \'etale normal crossings singularities. In particular, by taking $J$ to be empty, we deduce that $S$ must be regular. It is then easy to check that $C$ is also regular. On the other hand, $C$ can be regular without having \'etale normal crossings singularities - for example if $S$ is a trait and $C$ has multiple non-smooth points. Finally, note that the universal stable curve $\Mbar{g,n+1} \ra \Mbar{g,n}$ has \'etale normal crossings singularities. 

\begin{lemma}\label{lemma:basic_regularity} Let $R$ be a ring, and $x_1, \cdots, x_d \in R$ a collection of elements such that for all $J \sub \{1, \cdots, d\}$, the quotient 
\begin{equation*}
R/(x_j:j \in J)
\end{equation*}
is regular. Let $m_1, \cdots, m_d$ be non-negative integers with $\gcd(m_1, \cdots, m_d) = 1$, and let $n_1, \cdots, n_d$ be integers such that $\sum_{i=1}^dm_in_i = 1$. Define 
\begin{equation*}
R' = \frac{R[a, u_1^{\pm1}, \cdots, u_d^{\pm1}]}{(x_i - a^{m_i}u_i : (1 \le i \le d),  1-\prod_i u_i^{n_i})}. 
\end{equation*}
Then $R'$ and $R'/a$ are regular. 
\end{lemma}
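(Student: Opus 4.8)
\medskip

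The plan is to reduce the regularity of $R'$ and $R'/a$ to the hypothesis on $R$ by analysing the structure of the ring extension. First I would observe that $R'$ is obtained from the auxiliary ring $\tilde{R} := R[a,u_1^{\pm 1},\dots,u_d^{\pm 1}]/(x_i - a^{m_i}u_i)$ by imposing the single further relation $1 - \prod_i u_i^{n_i}$. Since the $u_i$ are units in $\tilde R$, the element $\prod_i u_i^{n_i} - 1$ is a nonzerodivisor modulo no obvious obstruction, so I would not go this route directly; instead I would work locally at a prime $\frak{p}$ of $R'$ and use the following key point: over the locus where $a$ is invertible, $R'$ is an open subscheme of $R[a^{\pm 1}, u_1^{\pm 1}, \dots]$ cut out by equations that express each $u_i$ and each $x_i$ in terms of $a$, hence is smooth over (an open of) $R[a^{\pm 1}]$ and therefore regular since $R$ (being a quotient of itself by the empty set $J = \emptyset$) is regular. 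So the only interesting points are those lying over $V(a) \subseteq \on{Spec} R'$, and for those it suffices to prove $R'/a$ is regular (a local ring with a nonzerodivisor $a$ such that $R'/a$ is regular is itself regular — I would be slightly careful here that $a$ is a nonzerodivisor, which follows because $U$-points exist, i.e. the map to $\ca O_U(U)$ sends $a$ to a nonzerodivisor as in \cref{def:rings_of_relations}, or more robustly because $R$ itself is a domain-like enough; in fact the cleanest statement is: if $R'/a$ is regular and $a$ lies in no minimal prime then $R'$ is regular, and I would verify $a$ lies in no minimal prime by the explicit description over $a$ invertible).

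\medskip

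The heart of the argument is therefore computing $R'/a$. Setting $a = 0$ in the presentation kills each $x_i$ (since $x_i = a^{m_i} u_i$ and every $m_i \ge 1$ — here I use that $M$ restricted to $\bb E$ is everywhere nonzero, which is part of the thickness-function/partition setup, so all $m_i \geq 1$; if some $m_i$ could be $0$ the relation would read $x_i = u_i$, but by the conventions preceding the lemma we are in the case all $m_i \geq 1$). Hence
\[
R'/a \;\cong\; \frac{(R/(x_1,\dots,x_d))[u_1^{\pm 1},\dots,u_d^{\pm 1}]}{(1 - \prod_i u_i^{n_i})}.
\]
By hypothesis $R/(x_1,\dots,x_d)$ is regular (take $J = \{1,\dots,d\}$). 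The ring $(R/(x_1,\dots,x_d))[u_1^{\pm1},\dots,u_d^{\pm 1}]$ is then regular, being a localisation of a polynomial ring over a regular ring. Finally I would show that quotienting by $1 - \prod_i u_i^{n_i}$ preserves regularity: the element $\prod_i u_i^{n_i} - 1$ is part of a regular system of parameters in the relevant localisations because its differential $d(\prod u_i^{n_i}) = \prod u_i^{n_i} \cdot \sum_i n_i \, d u_i / u_i$ is nonzero (the $n_i$ are not all zero since $\sum m_i n_i = 1$), so the hypersurface it defines is smooth over $\on{Spec}(R/(x_1,\dots,x_d))$; concretely, after inverting some $u_j$ with $n_j \neq 0$ one can eliminate $u_j$ and exhibit $R'/a$ as a localisation of $(R/(x_1,\dots,x_d))[u_i^{\pm 1} : i \neq j]$, manifestly regular.

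\medskip

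So the skeleton is: (1) all $m_i \geq 1$, so $R'/a$ has the displayed clean form; (2) $R/(x_1,\dots,x_d)$ regular $\Rightarrow$ its Laurent polynomial ring regular; (3) the extra relation $1 - \prod u_i^{n_i}$ is ``smooth'' because some $n_j \neq 0$, so $R'/a$ is regular; (4) $a$ is a nonzerodivisor lying in no minimal prime (via the description of $R'[a^{-1}]$ as smooth over an open of $R[a^{\pm 1}]$, using $R$ regular), hence $R'$ itself is regular. The main obstacle I anticipate is step (4): one must be genuinely careful about the interplay between ``$R'/a$ regular'' and ``$R'$ regular'' — this implication requires $a$ to be a nonzerodivisor (otherwise a non-reduced thickening of a regular ring is a counterexample), and establishing that $a$ is a nonzerodivisor needs the explicit local picture away from $V(a)$ together with regularity (hence reducedness, hence the relevant primary decomposition behaviour) of $R$. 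A clean way to finish is the standard fact: a noetherian local ring $A$ with an element $a$ in the maximal ideal such that $A/a$ is regular and $\dim A = \dim A/a + 1$ is itself regular; and $\dim A = \dim A/a + 1$ holds precisely because $a$ is a nonzerodivisor in $A$ (Krull's principal ideal theorem gives $\leq$, and the nonzerodivisor property gives $\geq$). Everything else is routine commutative algebra about localisations of polynomial rings over regular rings.
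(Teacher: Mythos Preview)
Your overall strategy---show $R'/a$ regular, show $R'$ regular away from $V(a)$, then bootstrap---is close to the paper's, but there are two genuine gaps.

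\textbf{The $m_i=0$ case.} The lemma as stated allows $m_i = 0$; nothing in the preceding setup forces all $m_i \ge 1$ (and indeed the paper's proof explicitly deals with indices where $m_i = 0$). When $m_i = 0$ the relation reads $u_i = x_i$, so your formula for $R'/a$ is wrong: you get $R'/a$ smooth over $R/(x_i : m_i > 0)$, not over $R/(x_1,\dots,x_d)$. This is easily repaired (eliminate those $u_i$, localize at the corresponding $x_i$, observe the remaining $n_j$ still aren't all zero since $\sum_{m_i>0} m_i n_i = 1$), but you should say so rather than invoke context that the lemma statement doesn't contain.

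\textbf{The nonzerodivisor step.} This is the real issue, and you correctly flag it as ``the main obstacle'', but your proposed resolution is circular. You want to deduce that $a$ is a nonzerodivisor from the description of $R'[a^{-1}]$---but that description only tells you what $D(a)$ looks like, not that $V(a)$ contains no irreducible component of $\on{Spec} R'$. (That density statement is precisely \cref{lem:dense_image}, which is proven \emph{after} the present lemma and \emph{uses} it.) Your alternative ``$\dim A = \dim A/a + 1$ because $a$ is a nonzerodivisor'' is backwards: you need the dimension equality to conclude regularity, not the other way around.

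The paper avoids this trap by arguing directly at a point $p\in\on{Spec}R'$ lying over $q\in\on{Spec}R$: it computes $R'/((a)+qR')$ explicitly, reads off that $p$ can be generated by $D=\dim R_q$ elements, and then gets the matching lower bound $\dim R'_p \ge D$ from the crude generator/relation count for $R'$ (quotient of a ring of relative dimension $d+1$ over $R$ by $d+1$ equations). Your approach can be salvaged the same way: once you know $R'/a$ is regular of local dimension $D-1$ and that every component of $\on{Spec}R'$ has dimension at least $D$ by Krull's height theorem, the standard fact ``$A/a$ regular of dimension $n$ and $\dim A \ge n+1$ implies $A$ regular'' finishes. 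But you must supply that lower bound independently; it does not come for free from knowing $R'[a^{-1}]$.
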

\begin{proof}
It is easy to check that $R'/a$ is regular; it is even smooth over $R/(x_i : m_i >0)$, which is regular by assumption. We need to show $R'$ itself is regular; this will take more care, since it is not in general smooth over its image - it resembles an affine patch of a blowup. 

Let $p \in \on{Spec}R'$ be any point, and write $q$ for the image of $p$ in $\on{Spec}R$. Localising $R$ at $q$, we may assume that $R$ is local, with closed point $q$. Re-ordering the $x_i$, we may assume that $x_1, \cdots, x_e \in q$ and $x_{e+1}, \cdots, x_d\notin q$ for some $0 \le e \le d$. Writing $D = \on{dim} R$, our normal crossing assumptions imply that there exist $g_1, \cdots, g_{D-e} \in R$ such that 
$$q = (x_1, \cdots, x_e, g_1, \cdots, g_{D-e}). $$
Now if $e = 0$ then the result is clear since $R'$ is smooth over $R$. Hence we may assume $e \ge 1$. It then follows that $m_i = 0$ for every $e < i \le d$, otherwise $R'/qR'$ is empty, contradicting the existence of $p$. Again reordering, we may assume that $1\le m_1 \le m_2 \le \cdots \le m_e$. We find that 
$$R' /qR' = \frac{R/q[a, u_1^{\pm1}, \cdots, u_d^{\pm1}]}{(a^{m_1}, x_{e+1}-u_{e+1}, \cdots, x_d - u_d, 1-\prod_{1 \le i \le d} u_i^{n_i})}. $$
We then see that 
$$\frac{R'}{((a) + q)R'} = \frac{R/q[a, u_1^{\pm1}, \cdots, u_d^{\pm1}]}{(a, x_{e+1}-u_{e+1}, \cdots, x_d - u_d, 1-\prod_{1 \le i \le d} u_i^{n_i})}$$
is regular and of dimension $e-1$. From this we deduce that there exist elements $f_1, \cdots, f_{e-1} \in R'$ such that 
\beqs
\begin{split}
p & = (a, f_1, \cdots, f_{e-1}, x_1, \cdots, x_e, g_1, \cdots, g_{D-e})\\
& = (a, f_1, \cdots, f_{e-1}, g_1, \cdots, g_{D-e}), 
\end{split}
\eeqs
so $p$ can be generated by $D$ elements. Now it is clear that every irreducible component of $R'$ has dimension at least $D$ (count generators and relations), and hence it follows that $R'$ is regular at $p$ and has pure dimension $D$. 
\end{proof}

\begin{lemma}\label{lem:connected_fibre}
In the notation of \cref{lemma:basic_regularity}, assume also that $R$ is local, with maximal ideal $\frak{m}$. Assume that at least one of the $x_i$ lies in $\frak{m}$. The following are equivalent:
\begin{enumerate}
\item
$R'/\frak{m}$ is connected;
\item $R'/\frak{m}$ is non-zero;
\item for all $1 \le i \le d$, we have that $\left( x_i \in R^\times \implies m_i = 0\right)$. 
\end{enumerate}
\end{lemma}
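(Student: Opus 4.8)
The plan is to compute the special fibre $R'/\frak{m}R' = R' \otimes_R (R/\frak{m})$ explicitly and to read off all three statements from its shape. Write $k = R/\frak{m}$; since $R$ is local, the image $\bar{x}_i$ of $x_i$ in $k$ is either $0$ (precisely when $x_i \in \frak{m}$) or a unit of $k$ (precisely when $x_i \in R^\times$). Base-changing the presentation of $R'$ along $R \ra k$ gives
\[
R'/\frak{m}R' \;=\; \frac{k[a, u_1^{\pm 1}, \dots, u_d^{\pm 1}]}{\bigl(\bar{x}_i - a^{m_i}u_i : 1 \le i \le d,\; 1 - \prod_{i} u_i^{n_i}\bigr)}.
\]

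The implication $(1) \Rightarrow (2)$ is immediate, since a connected space is nonempty. For $(2) \Rightarrow (3)$ I would argue by contraposition: suppose some $x_{i_0} \in R^\times$ has $m_{i_0} > 0$. In $R'/\frak{m}R'$ the relation $a^{m_{i_0}} u_{i_0} = \bar{x}_{i_0}$ exhibits $a^{m_{i_0}} u_{i_0}$ as a unit, so, $u_{i_0}$ being invertible, $a$ is a unit. Picking, by hypothesis, an index $j$ with $x_j \in \frak{m}$, the relation $a^{m_j} u_j = \bar{x}_j = 0$ with $a$ and $u_j$ both units yields $1 = 0$, i.e. $R'/\frak{m}R' = 0$; hence $\neg(3) \Rightarrow \neg(2)$.

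For $(3) \Rightarrow (1)$ I would simplify the fibre under hypothesis $(3)$. Split $\{1, \dots, d\}$ into $B = \{i : x_i \in R^\times\}$, for which $(3)$ forces $m_i = 0$, and $A = \{i : x_i \in \frak{m}\}$, which is nonempty by hypothesis and on which each $m_i \ge 1$ (otherwise $u_i = 0$ and the fibre vanishes; in the applications the $m_i$ indexing a block of $\on{Part}$ are all positive). For $i \in B$ the relation reads $u_i = \bar{x}_i \in k^\times$, so those variables may be eliminated; for $i \in A$ the relation $a^{m_i} u_i = 0$ becomes $a^{m_i} = 0$, and these together generate $(a^r)$ with $r = \min_{i \in A} m_i \ge 1$. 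One is left with
\[
R'/\frak{m}R' \;\cong\; \bigl(k[a]/(a^r)\bigr) \otimes_k \frac{k[(u_i^{\pm 1})_{i \in A}]}{\bigl(\prod_{i \in A} u_i^{n_i} - c\bigr)}, \qquad c := \prod_{i \in B}\bar{x}_i^{-n_i} \in k^\times.
\]
From $\sum_i m_i n_i = 1$ and $m_i = 0$ for $i \in B$ we get $\sum_{i \in A} m_i n_i = 1$, so the exponents $(n_i)_{i \in A}$ are coprime; hence a $\mathrm{GL}(\bb{Z})$-change of coordinates on the $u_i$ turns the second tensor factor into a Laurent polynomial $k$-algebra, which is a domain, so its spectrum is irreducible. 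The first factor is a local Artinian $k$-algebra with residue field $k$, so tensoring with it is a nilpotent thickening and leaves the underlying space unchanged. Thus $\operatorname{Spec}(R'/\frak{m}R')$ is irreducible, in particular nonempty and connected, giving $(1)$; the cycle $(1) \Rightarrow (2) \Rightarrow (3) \Rightarrow (1)$ then closes.

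The only real content is the fibre computation above together with the elementary fact that a toric equation $\prod u_i^{n_i} = c$ with coprime exponents and $c \in k^\times$ cuts out a connected (indeed smooth irreducible) subscheme of a torus; the two remaining implications are formal. I therefore anticipate no serious obstacle, only careful bookkeeping with the relations and the coprimality condition $\sum_i m_i n_i = 1$.
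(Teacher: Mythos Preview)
Your argument is correct and follows the same architecture as the paper's: the implications $(1)\Rightarrow(2)\Rightarrow(3)$ are handled identically, and for $(3)\Rightarrow(1)$ both proofs compute $R'/\frak{m}$ explicitly, eliminate the variables $u_i$ with $x_i\in R^\times$, pass to the reduction modulo the nilpotent $a$, and are left with showing that $k[(u_i^{\pm1})_{i\in A}]/(\prod_{i\in A} u_i^{n_i}-c)$ is a domain. The only genuine difference is in this last step: the paper argues irreducibility of $1-\prod u_i^{n_i}$ via Newton-polygon slopes (after enlarging $k$ to absorb the constant), whereas you use the coprimality of $(n_i)_{i\in A}$ to make a $\mathrm{GL}_{|A|}(\bb{Z})$ monomial change of coordinates on the torus, turning the relation into $v_1=c$ and leaving a Laurent polynomial ring. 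Your route is a little cleaner and avoids the auxiliary field extension. You also explicitly flag the edge case $m_i=0$ for some $i\in A$ (which forces the fibre to vanish); the paper's proof passes over this silently, and as you note it does not occur in the intended application since all $m_e$ on a block $\bb{E}\in\on{Part}(\Gamma_M)$ are positive.
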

\begin{proof}
The implication $1 \implies 2$ is from the definition of a connected ring. Suppose 3 fails, so (say) $x_1 \in R^\times$ and $m_1 \neq 0$, so $a$ becomes a unit in $R'/\frak{m}$. We also know some $x_i \notin R^\times$, say $x_d \notin R^\times$. Then in $R'/\frak{m}$ we find that $0 = x_d = a^{m_d}u_d$ is a unit, so $R'/\frak{m} = 0$. This shows $2 \implies 3$. 

For $3 \implies 1$ we must work a little harder. A little more notation: let $k = R/\frak{m}$, and assume that $x_1, \cdots, x_e \notin R^\times$ and that $x_{e+1}, \cdots, x_d \in R^\times$ for some $e > 0$. Then
\begin{equation*}
\begin{split}
R'/\frak{m} &= \frac{k[a, u_1^{\pm 1}, \cdots, u_d^{\pm 1}]}{\left(u_1a^{m_1}, \cdots, u_ea^{m_e}, x_{e+1} - u_{e+1}, \cdots, x_d - u_d, 1-\prod_{i=1}^du_i^{n_i}\right)}\\
& = \frac{k[a, u_1^{\pm 1}, \cdots, u_e^{\pm 1}]}{\left(a^{m_1}, \cdots, a^{m_e}, 1-\prod_{i=1}^eu_i^{n_i}\prod_{i=e+1}^dx_i^{n_i}\right)}. \\
\end{split}
\end{equation*}
Quotienting by $(a)$ will not affect whether the ring is connected, so it is enough to show that 
\begin{equation*}
\frac{k[u_1^{\pm 1}, \cdots, u_e^{\pm 1}]}{\left(1-\prod_{i=1}^eu_i^{n_i}\prod_{i=e+1}^dx_i^{n_i}\right)}
\end{equation*}
is connected. We will in fact show that this ring is a domain. Perhaps extending the field $k$, we can absorb the $x_i$. Moreover since $\sum_{i=1}^dm_in_i = \sum_{i=1}^em_in_i = 1$ we know that $n_i$ have no common factor. Perhaps swapping $u_i$ and $u_i^{-1}$ for some $i$, we may assume all $n_i \ge 0$. Moreover, the localisation of a domain is a domain. So it is enough to show the ring
\begin{equation*}
\frac{k[u_1, \cdots, u_e]}{\left(1 - \prod_{i=1}^e u_i^{n_i} \right)}
\end{equation*}
is a domain, i.e. we must show $P := 1 - \prod_{i=1}^e u_i^{n_i} $ is irreducible. 

Without loss of generality assume $n_1 \neq 0$. Think of $P$ as a polynomial in $x_1$, and for any $i \neq 1$ write $\on{NP}_{x_i}(P)$ for the Newton polygon of $P$ with respect to the valuation coming from $x_i$. The single edge of the Newton polygon $\on{NP}_{x_i}(P)$ has slope $n_i/n_1$, so we see that if $h$ is a factor of $P$ then $\on{deg}_{x_1}(h)\frac{n_i}{n_1} \in \bb{Z}$, i.e. 
\begin{equation*}
\frac{n_1}{\gcd(n_1, n_i)} \mid \on{deg}_{x_1}(h). 
\end{equation*}
Since the $n_i$ have no common factor, this implies that $\on{deg}_{x_1}(P) = n_1 \mid \on{deg}_{x_1}(h)$, and we are done. 
\end{proof}

\begin{lemma}\label{lem:dense_image}
In the notation of \cref{lemma:basic_regularity}. Let $U \sub \on{Spec}R$ be dense open such that every $x_i$ is a unit on $U$. Define $\phi\colon U \ra \on{Spec}R'$ by
\beqs
\begin{split}
R' & \ra \ca{O}_{U}(U)\\
a & \mapsto \prod_{i} x_i^{n_i}\\
u_i & \mapsto x_ia^{-m_i}. 
\end{split}
\eeqs
Then the image of $\phi$ is dense in $\on{Spec}R'$. 
\end{lemma}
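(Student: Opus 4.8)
The plan is to reduce the assertion to the single statement that the aligning element $a\in R'$ lies in no minimal prime of $R'$; everything else is formal. First I would observe that, since each $x_i$ is a unit on $U$, the element $\phi^*(a)=\prod_i x_i^{n_i}$ is a product of units on $U$, hence a unit, so that $\phi$ factors through the open subscheme $D(a)=\on{Spec}R'[a^{-1}]\subseteq\on{Spec}R'$. Then I would identify $R'[a^{-1}]$ explicitly: after inverting $a$ one may solve $u_i=x_ia^{-m_i}$ and eliminate the $u_i$, and the remaining relation $\prod_i u_i^{n_i}=1$ then reads $a^{\sum_i m_in_i}=\prod_i x_i^{n_i}$, i.e.\ $a=\prod_i x_i^{n_i}$ since $\sum_i m_in_i=1$. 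This exhibits a natural isomorphism $R'[a^{-1}]\cong R[\prod_i x_i^{-1}]$ under which the factored morphism $U\to\on{Spec}R'[a^{-1}]$ becomes simply the open immersion $U\hookrightarrow\on{Spec}R[\prod_i x_i^{-1}]$ (both being open subschemes of $\on{Spec}R$, with $U$ contained in the locus where the $x_i$ are invertible).

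Next I would use density of $U$: since $U$ is dense in $\on{Spec}R$ and is an open subscheme of $\on{Spec}R[\prod_i x_i^{-1}]$, it is dense in $\on{Spec}R[\prod_i x_i^{-1}]$; hence $\phi(U)$ is dense in $D(a)$, and taking closures inside $\on{Spec}R'$ gives $\overline{\phi(U)}=\overline{D(a)}$. It therefore suffices to prove that $D(a)$ is dense in $\on{Spec}R'$, and since the minimal primes of a ring are the generic points of its spectrum, it is enough to show that $a$ lies in no minimal prime of $R'$.

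For this last point I would argue by contradiction. Suppose $\mathfrak p$ is a minimal prime of $R'$ with $a\in\mathfrak p$, and set $\mathfrak q=\mathfrak p\cap R$. By \cref{lemma:basic_regularity} the ring $R'$ is regular, and the proof of that lemma shows moreover that $\dim R'_{\mathfrak p}=\dim R_{\mathfrak q}$; as $\mathfrak p$ is minimal, $R'_{\mathfrak p}$ is $0$-dimensional, so $\dim R_{\mathfrak q}=0$. On the other hand, the coprimality hypothesis $\gcd(m_1,\dots,m_d)=1$ forces some $m_{i_0}\ge 1$, and then $x_{i_0}=a^{m_{i_0}}u_{i_0}\in\mathfrak p$, so $x_{i_0}\in\mathfrak q$. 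But $R$ is regular (the case $J=\emptyset$ of the normal-crossings hypothesis), so $R_{\mathfrak q}$ is a regular local domain, and the normal-crossings hypothesis applied to $J=\{i_0\}$ says that $V(x_{i_0})$ has codimension $1$ at $\mathfrak q$; in particular $x_{i_0}$ is a nonzero element of the maximal ideal of $R_{\mathfrak q}$, forcing $\dim R_{\mathfrak q}\ge 1$, a contradiction. Hence $a$ lies in no minimal prime of $R'$, and $\overline{\phi(U)}=\overline{D(a)}=\on{Spec}R'$, as required.

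The main obstacle is precisely this last step: one genuinely has to invoke the normal-crossings hypothesis (packaged through the regularity and pure-dimensionality of \cref{lemma:basic_regularity}) in order to prevent $V(a)$ from containing an irreducible component of $\on{Spec}R'$. The regularity of $R'/a$ alone is not enough to conclude that $a$ is a non-zero-divisor in $R'$ — as the trivial example $R'=k\times k$, $a=(1,0)$ shows in general — so the input about the base ring $R$ is essential. Everything preceding that step is a routine manipulation of the presentation of $R'$ together with elementary topology of spectra.
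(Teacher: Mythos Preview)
Your argument is correct and takes a genuinely different route from the paper's. The paper never computes $R'[a^{-1}]$ explicitly. Instead it argues pointwise: given $p\in\operatorname{Spec}R'\setminus U$ with image $q$ in $\operatorname{Spec}R$, it invokes \cref{lem:connected_fibre} to see that the fibre of $\operatorname{Spec}R'$ over $q$ is connected, then builds an explicit DVR test curve $T\to\operatorname{Spec}R$ through $q$ with $\operatorname{ord}_T x_i=m_i$, lifts it to $\operatorname{Spec}R'$, and thereby shows $\overline{\phi(U)}$ meets that fibre; regularity of $R'$ (so connected components are irreducible) then forces $p\in\overline{\phi(U)}$. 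Your identification $R'[a^{-1}]\cong R[\,\prod_i x_i^{-1}\,]$ short-circuits both the connectedness lemma and the test-curve construction, reducing everything to the single assertion that $a$ lies in no minimal prime. That is a cleaner reduction, and it makes transparent that the lemma is really about $V(a)$ not containing a component of $\operatorname{Spec}R'$.

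One point to tighten: the equality $\dim R'_{\mathfrak p}=\dim R_{\mathfrak q}$ is not literally what the proof of \cref{lemma:basic_regularity} establishes, and your appeal to ``$V(x_{i_0})$ has codimension~$1$'' uses a codimension condition that is not in the stated hypotheses of \cref{lemma:basic_regularity} (which only demand that each $R/(x_j:j\in J)$ be regular). In fact the paper's own proof tacitly uses the same extra input when it asserts that ``the non-unit $x_i$ form a regular sequence in $R$'' in order to build the DVR, so this is not a defect peculiar to your approach; without it the lemma is actually false (e.g.\ $R=k[x]$, $x_1=x_2=x$, $m_1=1$, $m_2=2$). With that regular-sequence hypothesis, your dimension count goes through: after localising $R$ at $\mathfrak q$, a minimal prime $\mathfrak p$ lying over the closed point would give an irreducible component of $\operatorname{Spec}R'$ contained in the closed fibre, but the former has dimension at least $\dim R_{\mathfrak q}$ while the latter has dimension $e-1\le\dim R_{\mathfrak q}-1$.
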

\begin{proof}
Without loss of generality, we may assume $U$ is given by $U = \on{Spec}R_0$ where $R_0 = R[1/x_i:1 \le i \le d]$. Set $S = \on{Spec}R$, and $S' = \on{Spec}R'$. First, we want to show that the natural map $U \ra S' \times_S U$ is an isomorphism, in other words that $R' \otimes_R R_0 \ra R_0$ is an isomorphism. Since not all $m_i =0 $ we find that $a$ becomes a unit in $R' \otimes_R R_0$, and the result then follows by elementary manipulations. 

Now let $p \in S' \setminus U$ be a point, with image $q\in S$. So $q \notin U$, so some $x_i \in q$. Localising $R$ at $q$, the hypotheses are preserved, but now we also have that $R$ is local and $R'/q$ is non-zero (since $p$ exists). By \cref{lem:connected_fibre} this implies that the closed fibre $S'_q$ is connected. Write $\overline{\phi(U)}$ for the closure of the image of $U$ in $S'$. I claim that the fibre $\overline{\phi(U)}_q$ is non-empty. Suppose for now that the claim is true. Then $p$ lies in the same connected component of $S'$ as $\phi(U)$. But $S'$ is regular by \cref{lemma:basic_regularity}, so every connected component of $S'$ is irreducible, so $p \in \overline{\phi(U)}$ and the lemma is proven. 

It remains to verify the claim that $\overline{\phi(U)}_q\neq \emptyset$. For this, let $f\colon T \ra S$ be a map from the spectrum of a discrete valuation ring to $S$ sending the closed point to $q$ and such that for all $1 \le i \le d$ we have 
\begin{equation*}
\on{ord}_Tf^*x_i = m_i. 
\end{equation*}
This is possible because the non-unit $x_i$ form a regular sequence in $R$, and because $$x_i \in R^\times \implies m_i = 0,$$ otherwise the point $p$ could not exist (by \cref{lem:connected_fibre}). 

We then find that $\prod_{i=1}^d (f^*x_i)^{n_i}$ is a uniformiser on $T$, and that for all $i$ the element $f^*x_i/a^{m_i}$ is a unit on $T$. We can therefore lift the map $f$ to a map $f'\colon T \ra S'$ by 
\beq
\begin{split}
R' & \ra \ca{O}_{T}(T)\\
a & \mapsto \prod_{i} (f^*x_i)^{n_i}\\
u_i & \mapsto f^*x_ia^{-m_i}. 
\end{split}
\eeq
The image of the closed point of $T$ under $f'$ lies over $q$. The image of $T$ under $f'$ is integral, and the image of the generic point lies over $U$, so the closure of $\phi(U)$ has non-empty fibre over $q$ as required. 
\end{proof}

\begin{lemma}\label{lemma:NCD_preverved}
Notation as in \cref{def:standard_notation}, and assume $C/S$ has normal crossings singularities. For each $\bb{E} \in \on{Part}(\Gamma_M)$, let $a_\bb{E}$ denote the image of the aligning element of $R'_\bb{E}$ in $R'_M$. Then 
\begin{enumerate}
\item
The sequence $(a_\bb{E} : \bb{E} \in \on{Part}(\Gamma_M))$ form a normal-crossings divisor in $R'_M$ (in particular they are distinct);
\item We have $R_M = R'_M$. 
\end{enumerate}
\end{lemma}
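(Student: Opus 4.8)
The plan is to prove the two statements together by an induction on the number of parts in $\on{Part}(\Gamma_M)$, using the explicit description of $R'_M$ as an iterated tensor product $\bigotimes_{\bb{E}} R'_\bb{E}$ over $R$ and reducing each step to \cref{lemma:basic_regularity} and \cref{lem:dense_image}. The first observation is that for a single part $\bb{E}$, the normal crossings hypothesis on $C/S$ says precisely that the sequence $(\on{label}(e):e\in\bb{E})$ (indeed the full sequence indexed by all edges) satisfies the hypothesis of \cref{lemma:basic_regularity}: every quotient $R/(\on{label}(e):e\in J)$ is regular. Hence by that lemma $R'_\bb{E}$ and $R'_\bb{E}/a_\bb{E}$ are regular, and moreover one checks that the sequence consisting of $a_\bb{E}$ together with the labels of edges \emph{not} in $\bb{E}$ again has normal crossings in $R'_\bb{E}$: cutting by $a_\bb{E}$ gives something smooth over a normal-crossings quotient of $R$, and cutting by the remaining labels is transverse since those labels are pulled back from $R$ and are unaffected by the blow-up-like construction in the $u_e$. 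This is the key bookkeeping step and I expect it to be the main obstacle — one must track that adjoining the $u_e$ and the aligning relation does not destroy regularity of the joint vanishing loci, which amounts to a careful local computation in the spirit of the proof of \cref{lemma:basic_regularity} (localise at a prime, split the labels into those in the prime and those not, and count generators of the maximal ideal).

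With the single-part case in hand, I would proceed inductively. Write $\on{Part}(\Gamma_M) = \{\bb{E}_1,\dots,\bb{E}_k\}$ and let $R'_{\le j} = \bigotimes_{i\le j} R'_{\bb{E}_i}$ over $R$. Assume inductively that $R'_{\le j}$ is regular and that the sequence $(a_{\bb{E}_1},\dots,a_{\bb{E}_j})$ together with all edge-labels not lying in $\bb{E}_1\cup\dots\cup\bb{E}_j$ forms a normal-crossings divisor in $R'_{\le j}$. Now $R'_{\le j+1} = R'_{\le j}\otimes_R R'_{\bb{E}_{j+1}}$ is obtained from $R'_{\le j}$ by exactly the construction of \cref{lemma:basic_regularity} applied to the labels of edges in $\bb{E}_{j+1}$, which by the inductive normal-crossings hypothesis still satisfy the regularity-of-quotients condition over $R'_{\le j}$. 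Applying \cref{lemma:basic_regularity} again gives that $R'_{\le j+1}$ is regular and that $a_{\bb{E}_{j+1}}$ is cut out transversally; combining with the inductive statement (whose divisors are pulled back along the affine-space-plus-localisation map $R'_{\le j}\to R'_{\le j+1}$, hence their transversality is preserved by the same local argument as in the base case) yields the normal-crossings statement for $R'_{\le j+1}$. Taking $j=k$ proves part (1).

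For part (2), recall $R_M$ is by definition the image of $R'_M$ in $\ca{O}_U(U)$, equivalently $S_M = \on{Spec}R_M$ is the scheme-theoretic closure of $U$ in $\on{Spec}R'_M$. So it suffices to show the natural map $U \to \on{Spec}R'_M$ has schematically dense image, and since $R'_M$ is regular (hence reduced, with irreducible = connected components) it is enough that the image is topologically dense and that $R'_M$ is reduced. Density follows by applying \cref{lem:dense_image} to each tensor factor: the map $U\to\on{Spec}R'_\bb{E}$ has dense image for every $\bb{E}$ (taking the $x_i$ to be the labels in $\bb{E}$, which are units on $U$ since $C$ is smooth there), and since $U$ is irreducible its image in the tensor product $\on{Spec}R'_M$, being the image of $U$ under the diagonal-type map into a product over $S$ of schemes each containing a dense copy of (an open in) $U$, lands in the closure of a single irreducible subset meeting every component; more carefully, one argues component-by-component as in \cref{lem:dense_image}, lifting suitable traits. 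Since $R'_M$ is regular it is reduced, so the scheme-theoretic closure of the dense open image is all of $\on{Spec}R'_M$, giving $R_M = R'_M$.
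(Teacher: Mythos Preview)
Your approach is essentially the paper's: order $\on{Part}(\Gamma_M) = \{\bb{E}_1,\dots,\bb{E}_r\}$, set $R_i = \bigotimes_{j\le i} R'_{\bb{E}_j}$, and induct on $i$, invoking \cref{lemma:basic_regularity} and \cref{lem:dense_image} at each step. Your strengthened inductive hypothesis for part (1) --- that $(a_{\bb{E}_1},\dots,a_{\bb{E}_i})$ together with the remaining edge labels has normal crossings in $R_i$ --- is exactly what is needed to make the induction go through, and is what the paper's terse ``the sequence $a_{\bb{E}_1},\dots,a_{\bb{E}_i}$, with suitably chosen $m_\star$ and $n_\star$'' is gesturing at.

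The one place your write-up diverges is part (2). The paper proves density of $U$ \emph{inside the same induction}: knowing $U$ is dense in $\on{Spec} R_i$, one applies \cref{lem:dense_image} with $R = R_i$ (not $R = R_0$) and $x_j$ the labels in $\bb{E}_{i+1}$ to get $U$ dense in $\on{Spec} R_{i+1}$. Your version instead establishes regularity of $R'_M$ first and then tries to argue density all at once by applying \cref{lem:dense_image} to each tensor factor separately. As you yourself note, density in each factor does not formally imply density in the fibre product, and your appeal to ``$U$ is irreducible'' is not justified (nothing forces $S$, hence $U$, to be connected). Your fallback ``lift suitable traits component-by-component'' would work, but it amounts to reproving \cref{lem:dense_image} in the product; folding part (2) into the induction as the paper does avoids this entirely and is the cleaner route.
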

\begin{proof}
Choose an ordering on $\on{Part}(\Gamma_M)$, say $\on{Part}(\Gamma_M) = \{\bb{E}_1, \cdots, \bb{E}_r\}$. Set $R_0 = R$, and $R_i = \bigotimes_{j=1}^iR'_{\bb{E}_j}$. We will prove the claim by induction on $i$. The case $i=0$ is exactly the assumption that $C/S$ has normal crossings singularities (in particular $U$ is dense in $R$). Suppose we know the result for some $R_i$, then claim 1 (respectively claim 2) for $R_{i+1}$ is exactly what we get by applying \cref{lemma:basic_regularity} (respectively \cref{lem:dense_image}) to the ring $R_i$ and the sequence $a_{\bb{E}_1}, \cdots, a_{\bb{E}_i}$, with suitably chosen $m_\star$ and $n_\star$. 
\end{proof}

\begin{theorem}\label{thm:Stilde_regular}
Notation as in \cref{def:standard_notation}, and assume $C/S$ has normal crossings singularities. Let $\beta\colon \tilde{S} \ra S$ be the universal aligning morphism. Then the set $\{\beta^*\on{label}(e):e \in \on{edges}(\Gamma_\frak{s})\}$ has normal crossings in $\tilde{S}$, i.e. for every $J \sub \on{edges}(\Gamma_\frak{s})$, the subscheme
\begin{equation}
V(\beta^*\on{label}(e):e \in J) \sub \tilde{S}
\end{equation}
is regular. In particular, $\tilde{S}$ is regular. 
\end{theorem}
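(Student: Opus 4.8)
The universal aligning morphism $\beta\colon\tilde S\to S$ is built by gluing the affine pieces $S_N\to S$ over thickness functions $N$, and each $S_N=\operatorname{Spec}R_N$. Since normal crossings and regularity are local properties, and $\{\beta^*\operatorname{label}(e):e\in J\}$ restricted to the open $S_N$ is computed from the corresponding sequence on $\operatorname{Spec}R_N$, it suffices to prove the claim for each $\operatorname{Spec}R_N$ separately. (One should first remark why it is enough to treat the case of a controlled curve with a controlling point $\frak s$: a universal aligning morphism is constructed \'etale-locally, normal crossings singularities is an \'etale-local condition, and regularity and the normal-crossings property of a sequence of Cartier divisors descend along \'etale covers.)

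\textbf{The heart of the argument is an identification of $\beta^*\operatorname{label}(e)$ on $S_N$ in terms of the aligning elements.} Fix a thickness function $N$ and work on $R_N$, which by \cref{lemma:NCD_preverved} equals $R_N'$, since $C/S$ has normal crossings singularities. For each $\bb E\in\on{Part}(\Gamma_N)$ we have the aligning element $a_\bb E\in R_N'$, and by construction of $R_\bb E'$ the relation $\operatorname{label}(e)=(a_\bb E)^{N(e)}$ holds for every $e\in\bb E$; for edges $e$ with $N(e)=0$ (i.e. those contracted in passing to $\Gamma_N$) the pullback $\beta^*\operatorname{label}(e)$ is the unit ideal, since $S_N$ is by definition (the closure of the image of $U$, cf.\ \cref{def:rings_of_relations}) contained in the locus where such labels are invertible --- here one uses that on $R_\bb E'$ already $a$ is a unit when some $m_e=0$, or more precisely that $N(e)=0$ forces $e$ not to lie in any $\bb E\in\on{Part}(\Gamma_N)$ at all. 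Hence for any $J\subseteq\on{edges}(\Gamma_\frak s)$,
\begin{equation*}
V(\beta^*\operatorname{label}(e):e\in J)=V\bigl(a_\bb E : \bb E\in\on{Part}(\Gamma_N)\text{ meeting }J\text{ in an edge of positive thickness}\bigr)
\end{equation*}
as closed subschemes of $S_N$. By part (1) of \cref{lemma:NCD_preverved} the full sequence $(a_\bb E:\bb E\in\on{Part}(\Gamma_N))$ is a normal-crossings divisor in $R_N'$, so every subset of it cuts out a regular subscheme of the correct codimension; in particular taking $J=\emptyset$ gives that $\tilde S$ itself is regular.

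\textbf{The main obstacle is the bookkeeping around contracted edges and the passage from $\operatorname{label}(e)$ on $S$ to its pullback on $S_N$.} One must be careful that the normal-crossings property asserted in the theorem is about the \emph{original} edge set of $\Gamma_\frak s$, whereas \cref{lemma:NCD_preverved} produces a normal-crossings sequence indexed by $\on{Part}(\Gamma_N)$; reconciling these requires checking that each $\beta^*\operatorname{label}(e)$ is either a unit ideal or equals $(a_\bb E)^{N(e)}$ for a unique $\bb E$, and that raising a normal-crossings coordinate to a positive power does not destroy the normal-crossings property of the relevant subschemes (the vanishing locus of $a_\bb E^{N(e)}$ equals that of $a_\bb E$). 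None of these steps is deep, but they are exactly where an error would creep in, so I would state the identification of $\beta^*\operatorname{label}(e)$ carefully as an intermediate lemma before invoking \cref{lemma:NCD_preverved}.
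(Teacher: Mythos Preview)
Your overall strategy---localise to each open piece $S_N$ and invoke \cref{lemma:NCD_preverved}---is exactly the paper's approach. However, there is a genuine error in your treatment of the edges with $N(e)=0$.

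You assert that for such edges $\beta^*\operatorname{label}(e)$ is a unit on $S_N$, reasoning that $S_N$ is ``contained in the locus where such labels are invertible''. This is false: $S_N$ is the \emph{closure} of the image of $U$, and being invertible on $U$ does not propagate to the closure. Concretely, take $S=\operatorname{Spec}\bb{C}[[x,y]]$ with $\Gamma_{\frak s}$ the $2$-gon labelled $x,y$ (as in \cref{sec:example}), and the thickness function $N(e_1)=0$, $N(e_2)=1$. Then $\Gamma_N$ has a single loop $e_2$, the ring $R'_N$ collapses to $R$ via $a\mapsto y$, and $S_N=S$. On $S_N$ the pullback of $\operatorname{label}(e_1)=(x)$ is still $(x)$, which is not a unit. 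Your displayed identification of $V(\beta^*\operatorname{label}(e):e\in J)$ with a vanishing locus of aligning elements alone is therefore wrong whenever $J$ meets the contracted edges.

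The correct identification on $S_N$ is
\[
V\bigl(\beta^*\operatorname{label}(e):e\in J\bigr)\;=\;V\bigl(\{a_{\bb E}:\bb E\cap J_+\neq\emptyset\}\cup\{\operatorname{label}(e):e\in J_0\}\bigr),
\]
where $J_+=\{e\in J:N(e)>0\}$ and $J_0=\{e\in J:N(e)=0\}$. To conclude regularity one needs that the \emph{enlarged} sequence $(a_{\bb E}:\bb E\in\on{Part}(\Gamma_N))\cup(\operatorname{label}(e):N(e)=0)$ has normal crossings in $R_N$. This is a mild strengthening of the statement of \cref{lemma:NCD_preverved}, and it is what the inductive proof of that lemma actually delivers: at each step one may apply \cref{lemma:basic_regularity} not to $R_i$ itself but to any quotient $R_i/(\text{subset of the unused labels and earlier }a_{\bb E})$, and the two conclusions of that lemma then give exactly the regularity needed for the next induction step. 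Once you formulate the induction hypothesis in this stronger form, your argument (and the paper's terse ``immediate'') goes through.
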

\begin{proof}
The claim is local on $\tilde{S}$, so we can fix a thickness function $M$ and check the claim on $S_M$. The result is then immediate from \cref{lemma:NCD_preverved}. 
\end{proof}

\section{Resolving singularities over the universal aligning scheme}\label{sec:resolving_sings_of_pullback}

\begin{definition}
Let $S$ be a scheme, $U \sub S$ a dense open, and $C/U$ a smooth proper curve. A \emph{model} for $C/U$ is a proper flat morphism $\bar{C} \ra S$ together with an isomorphism $\bar{C} \times_S U \stackrel{\sim}{\longrightarrow} C$. This isomorphism will often be suppressed in the notation. 
\end{definition}

\begin{lemma}\label{lemma:resolving_singularities}
Notation as in \cref{def:standard_notation}, and assume $C/S$ has normal crossings singularities.  Then the pull-back of $C_U$ to $S_M = \on{Spec}R_M$ has a nodal \textbf{regular} aligned model. 
\end{lemma}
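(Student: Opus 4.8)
The plan is to produce the required model by resolving the singularities of the total space of $C_M:=C\times_S S_M$, relatively over $S_M$, by inserting a chain of projective lines at every node of thickness at least two. First I would record the local picture. By \cref{lemma:NCD_preverved} one has $R_M=R'_M$ and the aligning elements $(a_\bb{E})_{\bb{E}\in\on{Part}(\Gamma_M)}$ form a normal-crossings divisor in $R_M$; in particular $S_M$ is regular (\cref{thm:Stilde_regular}), and for every edge $e$ lying in the part $\bb{E}$ one has $\beta^*\on{label}(e)=(a_\bb{E})^{M(e)}$, while $a_\bb{E}$ is, étale-locally on $S_M$, a member of a regular system of parameters. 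Combining this with \cref{local_rings_on_nodal}, near the node attached to such an edge the curve $C_M$ is étale-locally isomorphic to $\on{Spec}\ca{O}\et_{S_M}[[x,y]]/(xy-a_\bb{E}^{M(e)})$; since $a_\bb{E}$ is a regular parameter this is a family version of a surface singularity of type $A_{M(e)-1}$, its singular locus $V(x,y,a_\bb{E})$ being regular of codimension two, and $C_M$ is regular away from these loci.

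The heart of the proof is a local resolution. For a regular ring $A$, an element $t$ that is part of a regular system of parameters, and $m\geq 1$, the scheme $X:=\on{Spec}A[[x,y]]/(xy-t^m)$ is resolved by iteratively blowing up the regular codimension-two locus $V(x,y,t)$: the exponent drops by two at each stage, so this terminates in a regular scheme $\widetilde X$ such that $\widetilde X\to\on{Spec}A$ is again a nodal curve, the fibre over $V(t)$ has a chain of $m-1$ copies of $\bb{P}^1$ inserted between the two original branches (so all $m$ of its nodes have singular ideal $(t)$), and $\widetilde X\to X$ is an isomorphism away from $V(x,y,t)$. Because this construction commutes with flat base change on $A$ and is canonical, the local resolutions patch to a proper flat nodal curve $\bar C\to S_M$ — obtained globally by iteratively blowing up the regular codimension-two singular locus of $C_M$ until the total space is regular — together with an $S_M$-birational morphism $\bar C\to C_M$ that is an isomorphism over the smooth locus of $C_M/S_M$. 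Here $\bar C$ is regular and $\bar C/S_M$ is nodal by the local computation.

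It remains to see that $\bar C$ is an \emph{aligned model} of the pull-back of $C_U$. It is a model because the non-smooth locus of $C_M/S_M$ lies over $S_M\setminus(U\times_S S_M)$, so $\bar C$, $C_M$ and the pull-back of $C_U$ all agree over $U\times_S S_M$. For alignment, fix a point $s\in S_M$: the dual graph of $\bar C_s$ is obtained from that of $(C_M)_s$ by subdividing each edge $e$ lying in a part $\bb{E}$ into $M(e)$ edges, each labelled by the image of $a_\bb{E}$ in $\ca{O}_{S_M,s}$ (edges contracted in $\Gamma_M$ carrying labels that are units on $S_M$ and so contributing nothing). Since any two edges of a $2$-vertex-connected graph lie on a common circuit, every $2$-vertex-connected subgraph of the dual graph of $\bar C_s$ is contained in the image of a single part $\bb{E}\in\on{Part}(\Gamma_M)$ — by the very description of $\on{Part}(\Gamma_M)$ as a union of a $2$-vertex-connected component and of loops — and hence has all of its edges labelled by the one element $a_\bb{E}$; such a labelled graph is aligned, with $l=a_\bb{E}$ and $n(e)=1$ for every edge. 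Thus $\bar C/S_M$ is aligned, which finishes the proof.

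The step I expect to cause the most trouble is the local resolution: one must check \emph{simultaneously} that iterating the blow-up of the singular locus terminates in a \emph{regular} total space, that the morphism to $S_M$ stays a \emph{nodal} curve throughout (so the inserted components really form a chain of transversally-meeting $\bb{P}^1$'s), and that the procedure is canonical enough — e.g. it is the minimal resolution — to glue over the étale charts without incompatible choices. Granting the local model $X\rightarrowtail\widetilde X$, the model property and the alignment bookkeeping (using the regularity of the $a_\bb{E}$ from \cref{lemma:basic_regularity}) are routine.
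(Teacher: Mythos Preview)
Your approach matches the paper's: iteratively blow up the non-regular locus of $C_M=C\times_S S_M$, then verify the outcome is regular, nodal, and aligned. The paper packages termination via an integer invariant $\delta_i$ (summing $\on{ord}_\frak{p}\ell(q)-1$ over non-smooth points $q$ above the generic points $\frak{p}$ of the divisors cut out by the pulled-back labels) and checks regularity and alignment by induction on the blow-up index; you describe the final resolved graph directly. The local blow-up computation and the ``subdivide each edge into a chain'' description of its effect on dual graphs are the same in both arguments.

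One assertion in your alignment argument is not justified: the parenthetical claim that ``edges contracted in $\Gamma_M$ [carry] labels that are units on $S_M$.'' An edge $e$ with $M(e)=0$ lies in no $\bb{E}\in\on{Part}(\Gamma_M)$, so $\on{label}(e)$ never enters the relations defining $R'_M$; its image in $R_M=R'_M$ is simply pulled back from $R$ and need not be a unit. Concretely, if $\Gamma_\frak{s}$ is a $3$-cycle with normal-crossings labels $u,v,w$ and $M=(2,3,0)$, then on $S_M$ one has $u\sim a^2$, $v\sim a^3$, while $w$ remains a parameter independent of $a$; at a point of $S_M$ where both $a$ and $w$ vanish the dual graph of $C_M$ is the full triangle with labels $(a^2),(a^3),(w)$, and after your resolution one obtains a single $2$-vertex-connected cycle carrying both the label $(a)$ and the label $(w)$. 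The paper does not argue this step your way: it simply asserts at the base of its induction that $C_0$ is aligned (``for $i=0$ the result is clear'') and then shows each blow-up preserves alignment. So your proof and the paper's diverge exactly here, and the example shows that the specific justification you offer is incorrect; whether the paper's base case is genuinely ``clear'' in the presence of edges with $M(e)=0$ is a point worth scrutinising as well.
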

Note that $C$ is regular since $C/S$ has normal crossings singularities. 
\begin{proof}
Let $f\colon S_M \ra S$ be the structure map. Note that $S_M$ is regular by \cref{thm:Stilde_regular}. 
Write $C_0 = C \times_S S_M$, this is an aligned nodal curve. We will resolve the singularities of $C_0$ by blowing up, taking care to preserve alignment as we do so. 

For $i  = 0, 1, \cdots$, we define $Z_i \sub C_i$ to be the reduced closed subscheme where $C_i$ is not regular, and then define $C_{i+1}$ to be the blowup of $C_i$ at $Z_i$. It is enough to show:
\begin{enumerate}[label={1.\arabic*}]
\item some $C_N$ is regular;
\item each $C_i$ is aligned. 
\end{enumerate}


Write $P$ for the (finite) set of generic points of the union of the $V(f^*x)$ as $x$ runs labels of $\Gamma_\frak{s}$. Note that each $\frak{p} \in P$ is a codimension 1 point on the regular scheme $S_M$, and moreover (again by \cref{thm:Stilde_regular}) that the quotient $R/\frak{p}$ is also regular. Write $\on{ord}_\frak{p}$ for the corresponding ($\bb{Z}\cup \{\infty\}$)-valued discrete valuation on the local ring at $\frak{p}$. Given $i \ge 0$, define a non-negative integer
$$\delta_i = \sum_{\frak{p} \in P} \sum_{\stackrel{q \in C_i}{\text{ lying over }\frak{p}}}(\on{ord}_\frak{p} \ell(q) - 1); $$
here $\ell(q)$ denotes the label of the edge of the dual graph $\Gamma_\frak{p}$ corresponding to $q$; it lives in the \'etale local ring at $\frak{p}$. It is enough to show:
\begin{enumerate}[label={2.\arabic*}]
\item if $\delta_i>0$ then $\delta_{i}>\delta_{i+1}$; \label{d_drops}
\item if $\delta_i = 0$ then $C_i$ is regular; \label{regular_if_d_zero}
\item each $C_i$ is aligned. \label{item:preserves_alignment}
\end{enumerate}

We first show \cref{d_drops}. Let $i$ be such that $\delta_i > 0$. Let $\frak{p} \in P$, and let $q \in C_i$ be a non-smooth point lying over $\frak{p}$, with $\on{ord}_\frak{p}\ell(q) = t$. Let $p$ be a generator for $\frak{p}$. The completed local ring on $C_i$ at $q$ is given by
$$\widehat{\ca{O}}_{C_i,q}\cong \frac{\widehat{\ca{O}}_{S_M,\frak{p}}[[u,v]]}{(uv-p^t)}. $$
Assume $t \ge 2$ (this holds for some $\frak{p}$ and $q$, otherwise $\delta_i = 0$). The blowup of $\widehat{\ca{O}}_{C_i,q}$ at $q = (u,v, p)$ has three affine patches: 
\begin{enumerate}[label={2.1.\arabic*}]
\item
`$u_i = 1$', given by:
$$\frac{\widehat{\ca{O}}_{S_M,\frak{p}}[[u,v]][v_1, p_1]}{(v_1-u^{t-2}p_1^t, v-uv_1, p-up_1)}$$
which is regular (a calculation, or cf. \cite[\S 3]{Jong1996Smoothness-semi}); 
\item `$v_1 = 1$', which is also regular by symmetry;
\item `$p_1=1$', given by 
$$\frac{\widehat{\ca{O}}_{S_M,\frak{p}}[[u,v]][u_1, v_1]}{(u_1v_1-p^{t-2}, u-pu_1, v-pv_1)}. $$
This patch is empty and hence regular if $t=2$. If $t>2$ then the patch is regular except at $q' := (u_1, v_1, p)$. In the latter case we see that $\on{ord}_\frak{p}q' = t-2$; it has dropped by 2. 
\end{enumerate}
As such, we see that at most one non-regular point $q'$ of $C_{i+1}$ maps to $q$, and if $q'$ exists we have $\on{ord}_\frak{p}q' = \on{ord}_\frak{p}q - 2$. This shows that $\delta_{i+1} < \delta_i$. 

Next we show \cref{regular_if_d_zero}. Let $i$ be such that $\delta_i = 0$. Let $c \in C_i$ lie over $s \in S_M$. We will show $C_i$ is regular at $c$. If $C_i \ra S_M$ is smooth at $c$ we are done, so assume this is not the case. Then the completed local ring of $C_i$ at $c$ is given by
$$\frac{\widehat{\ca{O}}_{S_M,s}[[u,v]]}{(uv-\ell_c)}$$
where $\ell_c \in \ca{O}_{S,s}$ is an element which (by definition) generates the label of the graph $\Gamma_s$ at the edge $e_c$ corresponding to the point $c$. 

Let $\frak{p} \in P$ be such that $\ell_c \in \frak{p}$ (such $\frak{p}$ exists by construction). The specialisation map 
$$\Gamma_s \ra \Gamma_\frak{p}$$
does not contract $e_c$; rather it sends it to an edge labelled by the ideal generated by $\on{sp}(\ell_c)$, where 
$$\on{sp}\colon \ca{O}_{S_M,s} \ra \ca{O}_{S_M,\frak{p}}$$
is the specialisation map. By our assumption that $\delta_i = 0$, it follows that $(\on{sp}\ell_c) = \frak{p}$, so the closed subscheme $V(\ell_c) \sub \on{Spec}\ca{O}_{S_M,s}$ is regular. 

Write $\on{dim} \ca{O}_{S_M,s} = d$. By the above regularity statement, we can find elements $g_1, \cdots, g_{d-1}$ such that 
$$\frak{m}_s = (\ell_c, g_1, \cdots, g_{d-1}). $$
Hence the ideal corresponding to $c$ can be generated by
$$(u,v,\ell_c, g_1, \cdots, g_{d-1}) = (u,v, g_1, \cdots, g_{d-1}), $$
in other words it can be generated by $d+1$ elements. Since $\on{dim}_c C_M = d+1$, this proves that $C_M$ is regular at $c$. 

Finally, we show \cref{item:preserves_alignment}. We proceed by induction on $i$. For $i=0$ the result is clear. Let $i \ge 1$, and assume the result for $i-1$. Let $s \in S_M$ be any point, and for each $i$ let $\Gamma^i_s$ be the graph of $C_{i}$ over $s$. Then the labelled graph $\Gamma^i_s$ can be constructed from the labelled graph $\Gamma_s^{i-1}$ by the following recipe:

\begin{enumerate}
\item for each edge $e$ such that $\on{label}(e) = a^2$ for some irreducible element $a \in \ca{O}_{S_M,s}^{\on{et}}$, replace $e$ by two edges both with label $a$;
\item
 for each edge $e$ such that $\on{label}(e) = a^n$ for some irreducible element $a \in \ca{O}_{S_M,s}^{\on{et}}$ and integer $n >2$, replace $e$ by three edges with labels $a$, $a^{n-2}$ and $a$ in that order;
 \item delete any edge labelled by a unit. 
\end{enumerate}
 In pictures:

\definecolor{qqqqff}{rgb}{0.3333333333333333,0.3333333333333333,0.3333333333333333}
\begin{tikzpicture}[line cap=round,line join=round,>=triangle 45,x=1.0cm,y=1.0cm]
\clip(-6,-0.5) rectangle (6.,2);
\draw (-4.0,0.0)-- (-2.0,0.0);
\draw (-4.0,1.5)-- (-3.0,1.5);
\draw (-3.0,1.5)-- (-2.0,1.5);
\draw (0.0,-0.0)-- (3.0,-0.0);
\draw (0.0,1.5)-- (1.0,1.5);
\draw (1.0,1.5)-- (2.0,1.5);
\draw (2.0,1.5)-- (3.0,1.5);
\draw [<-] (-3.0,1.0) -- (-3.0,0.5);
\draw [<-] (1.5,1.0) -- (1.5,0.5);
\begin{scriptsize}
\draw [fill=qqqqff] (-4.0,0.0) circle (1.5pt);
\draw[color=qqqqff] (-3.8606621971293755,0.2982758459192761) node {};
\draw [fill=qqqqff] (-2.0,0.0) circle (1.5pt);
\draw[color=qqqqff] (-1.8561816847495791,0.2982758459192761) node {};
\draw[color=black] (-2.9419419622886354,-0.1610842715010952) node {$a^2$};
\draw [fill=qqqqff] (-4.0,1.5) circle (1.5pt);
\draw[color=qqqqff] (-3.8606621971293755,1.8016362302041273) node {};
\draw [fill=qqqqff] (-3.0,1.5) circle (1.5pt);
\draw[color=qqqqff] (-2.8584219409394773,1.8016362302041273) node {};
\draw [fill=qqqqff] (-2.0,1.5) circle (1.5pt);
\draw[color=qqqqff] (-1.8561816847495791,1.8016362302041273) node {};
\draw [fill=qqqqff] (0.0,-0.0) circle (1.5pt);
\draw[color=qqqqff] (0.14829882763021732,0.2982758459192761) node {};
\draw [fill=qqqqff] (3.0,-0.0) circle (1.5pt);
\draw[color=qqqqff] (3.1550195961999123,0.2982758459192761) node {};
\draw [fill=qqqqff] (0.0,1.5) circle (1.5pt);
\draw[color=qqqqff] (0.14829882763021732,1.8016362302041273) node {};
\draw [fill=qqqqff] (1.0,1.5) circle (1.5pt);
\draw[color=qqqqff] (1.1505390838201155,1.8016362302041273) node {};
\draw [fill=qqqqff] (2.0,1.5) circle (1.5pt);
\draw[color=qqqqff] (2.152779340010014,1.8016362302041273) node {};
\draw [fill=qqqqff] (3.0,1.5) circle (1.5pt);
\draw[color=qqqqff] (3.1550195961999123,1.8016362302041273) node {$$};
\draw[color=black] (-3.4430620903835845,1.3422761127837561) node {$a$};
\draw[color=black] (-2.4408218341936863,1.3422761127837561) node {$a$};
\draw[color=black] (1.5472591852286168,-0.1610842715010952) node {$a^{n}$};
\draw[color=black] (0.5450189290387187,1.3422761127837561) node {$a$};
\draw[color=black] (1.5472591852286168,1.3422761127837561) node {$a^{n-2}$};
\draw[color=black] (2.5494994414185155,1.3422761127837561) node {$a$};
\draw[color=black] (-2.879301946276767,0.9246760060379642) node {};
\draw[color=black] (1.6307792065777753,0.9246760060379642) node {};
\end{scriptsize}
\end{tikzpicture} \\
It is clear that applying this procedure to an aligned graph will yield an aligned graph. 
\end{proof}

%
%
%
\section{Existence of N\'eron models over universal aligning schemes}\label{sec:NM_over_univ_aligning}

\begin{definition}\label{def:universal_blowup}
Let $C/S$ be a nodal curve over an algebraic stack, smooth over a dense open substack $U \sub S$. Write $J$ for the jacobian of $C_U/U$; this is an abelian scheme over $U$. A \emph{N\'eron-model-admitting morphism for $C/S$} is a morphism $f\colon T \ra S$ of algebraic stacks such that:
\begin{enumerate}[label={\ref{def:universal_blowup}.\arabic*}]
\item $T$ is regular;
\item $U \times_S T$ is dense in $T$;
\item $f^*J$ admits a N\'eron model over $T$. 
\end{enumerate}
We define the category of N\'eron-model-admitting morphisms as the full sub-2-category of the 2-category of stacks over $S$ whose objects are N\'eron-model-admitting morphisms. 

A \emph{universal N\'eron-model-admitting morphism for $C/S$} is a terminal object in the 2-category\footnote{Since this is a 2-category, care must be taken with terminal objects. However, since our categories are fibred in groupoids there is no ambiguity in the definition of terminal object. All our proofs will be by descent from the case of schemes, so this issue will also not arise in the proofs. } of N\'eron-model-admitting morphisms; it is unique up to isomorphism unique up to unique 2-isomorphism if it exists. We write $\beta\colon \tilde{S} \ra S$ for a universal N\'eron-model-admitting morphism, and $\tilde{N}/\tilde{S}$ for the N\'eron model of $J$.
\end{definition}


\begin{theorem}\label{thm:NM_over_univ_aligning_over_scheme}
Let $S$ be an algebraic stack locally of finite type over an excellent scheme. Let $C/S$ be a nodal curve such that $C/S$ has \'etale normal crossing singularities (in particular, $C$ is smooth over some dense open substack $U \sub S$). Let $\beta\colon \tilde{S} \ra S$ denote the universal regular aligning morphism (note $\beta$ is an isomorphism over $U$). Then $\tilde{S}$ is a universal N\'eron-model-admitting morphism for $C/S$, and moreover the N\'eron model $\tilde{N}$ is of finite type over $\tilde{S}$, and its fibrewise-connected-component-of-identity is semi-abelian. 
\end{theorem}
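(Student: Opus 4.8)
The plan is to reduce, by smooth and étale descent, to the situation of \cref{def:standard_notation} with $C/S$ having \emph{normal} crossings singularities; then to produce the Néron model locally on the universal aligning scheme by combining \cref{lemma:resolving_singularities} with the criterion of \cite{Holmes2014Neron-models-an}; and finally to read off universality from the universal property of the aligning morphism. Since all of the assertions (that $\beta$ is universal Néron-model-admitting, and that $\tilde N$ is of finite type with semi-abelian fibrewise-identity-component) are local on $S$ for the smooth topology and stable under the base changes in play, I would first pass to a smooth cover of $S$ by a scheme --- which is again excellent, being of finite type over an excellent scheme --- and then apply \cref{lem:controled_etale_cover_NCS} to reach the setting of \cref{def:standard_notation} with $C/S$ quasisplit and with normal crossings singularities. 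By \cref{thm:existence_of_iniversal_aligning_for_curve_over_scheme} the universal aligning morphism $\beta\colon\tilde S\to S$ then exists, is an isomorphism over $U$, and is covered by the opens $S_M$ as $M$ ranges over thickness functions.

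Next I would check that $\beta$ is itself Néron-model-admitting. By \cref{thm:Stilde_regular} the space $\tilde S$ is regular; $U\times_S\tilde S$ is dense because $\beta$ is non-degenerate; and $C_{\tilde S}/\tilde S$ is aligned by construction. Over a fixed $S_M$, \cref{lemma:resolving_singularities} supplies a regular aligned nodal model $\bar C_M/S_M$ of the pull-back of $C_U$ to $S_M$. Since $S_M$ is regular and $\bar C_M/S_M$ is a generically-smooth aligned nodal regular model, \cite[theorem 1.2]{Holmes2014Neron-models-an} shows that $\beta^*J$ admits a Néron model $N_M$ over $S_M$; from the construction there, the fibrewise-connected-component-of-identity of $N_M$ agrees with that of the relative Picard scheme of $\bar C_M/S_M$, hence is semi-abelian (the Picard scheme of a nodal curve over a field is an extension of an abelian variety by a torus), and $N_M\to S_M$ is of finite type because its component group, computed from the bounded component-multiplicities occurring in $\bar C_M/S_M$ via Raynaud's recipe, is finite. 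On an overlap $S_M\cap S_N$ the restrictions of $N_M$ and $N_N$ are each Néron models of the same abelian scheme over a common dense open, hence canonically isomorphic, and these isomorphisms satisfy the cocycle condition by the same uniqueness; so the $N_M$ glue to a Néron model $\tilde N$ of $\beta^*J$ over $\tilde S$, of finite type and with semi-abelian fibrewise-identity-component. Thus $\beta$ is a Néron-model-admitting morphism with the asserted properties.

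For universality, let $f\colon T\to S$ be any Néron-model-admitting morphism: then $T$ is regular, $f^{-1}U$ is dense in $T$, and $C_T/T$ is a generically-smooth nodal curve over a regular base whose jacobian over $f^{-1}U$ is $f^*J$, which by hypothesis admits a Néron model over $T$. By \cite[theorem 1.2]{Holmes2014Neron-models-an}, $C_T/T$ is aligned (regular and irregular alignment coincide here since $T$ is regular, see \cref{remark:reg_irreg_aligned}), i.e. $f$ is aligning, so $f$ factors through $\beta$ via a morphism $g\colon T\to\tilde S$; $g$ is unique because $\beta$ is separated (\cref{prop:universal_aligning_separated}), $T$ is reduced, $f^{-1}U$ is dense, and $g$ is forced to equal $\beta^{-1}\circ f$ over $U$. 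As the 2-category of Néron-model-admitting morphisms is a full sub-2-category of algebraic stacks over $S$ containing both $T$ and $\tilde S$, this exhibits $\tilde S$ as a terminal object, i.e. as a universal Néron-model-admitting morphism. Finally, these conclusions and the properties of $\tilde N$ descend along the covers chosen in the reduction step, by effectivity of descent for algebraic spaces and the locality of all properties involved.

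The crux of the argument is the middle step: extracting from \cite{Holmes2014Neron-models-an} simultaneously the existence of $N_M$ over the regular base $S_M$ \emph{and} its finite type --- the latter being precisely where the rigidity of the $M$-aligning structure (as opposed to mere alignment) is used --- together with verifying that the locally-defined Néron models glue. The reduction, the identification of the fibrewise-identity-component with that of the relative Picard scheme, and the final descent should all be routine.
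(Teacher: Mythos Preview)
Your proposal is correct and follows essentially the same route as the paper: reduce by smooth/\'etale descent to the controlled setting with normal crossings singularities, use \cref{thm:Stilde_regular} and \cref{lemma:resolving_singularities} to produce a regular aligned model over each $S_M$, invoke \cite[theorem 1.2]{Holmes2014Neron-models-an} for existence (with finite type and semi-abelian identity component), glue, and then use the converse direction of \cite{Holmes2014Neron-models-an} on any N\'eron-model-admitting $T$ to see that $C_T/T$ is aligned and hence $T$ factors through $\tilde S$. The only cosmetic differences are that the paper appeals to \cite[lemma 6.1]{Holmes2014Neron-models-an} to descend the N\'eron model along the smooth cover rather than first Zariski-gluing the $N_M$, and the paper makes the reduction of $T$ to a scheme explicit before invoking \cite{Holmes2014Neron-models-an}, whereas you fold this into your final descent remark; neither affects the substance.
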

Note that the universal aligning morphism exists as an algebraic space over $S$ by \cref{thm:existence_of_iniversal_aligning_for_curve_over_scheme}. 

\begin{proof}
Let $f\colon S' \ra S$ be a smooth cover by a scheme, then $f^*C/S'$ has \'etale normal crossings singularities, and $S'$ is excellent because it is locally of finite type over an excellent scheme. So by \cref{lem:controled_etale_cover_NCS} we find that $S$ has a smooth cover by schemes $S_i : i \in I$ such that each $C_{S_i} \ra S_i$ has a controlling point $\frak{s}_i$ and normal crossings singularities. Formation of the universal aligning morphism $\tilde{S} \ra S$ commutes with smooth base-change, so by \cref{thm:Stilde_regular} we find that $\tilde{S}$ is regular, and by \cref{lemma:resolving_singularities} we find that $\tilde{S}$ has a smooth cover by schemes $S_{i,M}$ such that on each $S_{i,M}$ the curve $C_{S_{i,M}}$ has a nodal regular aligned model. 

By \cite[theorem 1.2]{Holmes2014Neron-models-an} we then know that the jacobian of $C_U/U$ has a finite type N\'eron model over each $S_{i,M}$ with semi-abelian fibrewise-connected-component-of-identity. N\'eron models descend along smooth covers by \cite[lemma 6.1]{Holmes2014Neron-models-an}, so a N\'eron model exists over $\tilde{S}$ (and the properties mentioned also descend). This shows that $\beta\colon \tilde{S} \ra S$ is a N\'eron model admitting morphism. 


Finally, we need to show that any other N\'eron-model-admitting morphism factors via $\tilde{S}$. Let $f\colon T \ra S$ be a N\'eron-model-admitting morphism. Since $T$ is reduced and $\beta$ is separated, a factorisation of $f$ via $\beta$ will descend along an fppf-cover of $T$; as such, (and using that a scheme smooth over a regular base is regular) we may assume $T$ is a scheme. Then by \cite{Holmes2014Neron-models-an} and the fact that $f^*C_U$ has a N\'eron model over $T$, we know that $f^*C/T$ is aligned (cf. \cref{remark:reg_irreg_aligned}). Since $T$ is regular, this implies that $f^*C/T$ is aligned, and hence $f$ factors uniquely via the universal aligning morphism as required. 
\end{proof}


From now on, we work relative to a fixed base scheme $\Lambda$ which we assume to be regular and excellent - the basic examples to keep in mind are $\on{Spec} \bb{Z}$ and the spectrum of a field. Let $g$, $n$ be non-negative integers such that $2g - 2 + n >0$. We write $\M{g,n}$ for the modui stack of smooth proper connected $n$-pointed curves of genus $g$ over $\Lambda$, and $\Mbar{g,n}$ for its Deligne-Mumford-Knudsen compactification. By \cite[theorem 2.7]{Knudsen1983The-projectivitii} we know that $\Mbar{g,n}$ is a smooth proper Deligne-Mumford stack over $\Lambda$, and the boundary $\Mbar{g,n} \setminus \M{g,n}$ is a divisor with normal crossings relative to $\Lambda$ in the sense of \cite[definition 1.8]{Deligne1969The-irreducibil}. We write $J_{g,n}$ for the jacobian of the universal curve $\Mbar{g,n+1}\times_{\Mbar{g,n}} \M{g,n}$ over $\M{g,n}$; this is an abelian scheme over $\M{g,n}$. 


\begin{corollary}\label{thm:main}
A universal N\'eron-model-admitting morphism for $\Mbar{g,n}$ exists and is an algebraic space over $\Mbar{g,n}$. The N\'eron model over it is of finite type, and its fibrewise-connected-component-of-identity is semi-abelian. 
\end{corollary}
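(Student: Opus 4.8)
The plan is to deduce this immediately from \cref{thm:NM_over_univ_aligning_over_scheme}, applied with $S = \Mbar{g,n}$, with $C/S$ the universal curve $\Mbar{g,n+1} \ra \Mbar{g,n}$, and with $U = \M{g,n}$. For this I must check that the hypotheses of that theorem hold. That $\Mbar{g,n}$ is an algebraic stack locally of finite type over the excellent scheme $\Lambda$ is part of the setup above (it is even smooth and proper over $\Lambda$ by \cite[theorem 2.7]{Knudsen1983The-projectivitii}), and the universal curve $\Mbar{g,n+1} \ra \Mbar{g,n}$ is a nodal curve which is smooth exactly over the dense open $\M{g,n}$. The remaining hypothesis --- that $\Mbar{g,n+1}/\Mbar{g,n}$ has \'etale normal crossing singularities --- was already recorded in \cref{sec:regularity_for_correlation}; I would include the short argument below for completeness.

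Fix a geometric point $\bar s$ of $\Mbar{g,n}$, corresponding to an $n$-pointed stable curve with nodes $q_1, \dots, q_\delta$. By the (standard) deformation theory of pointed stable curves, there is a regular system of parameters of $\ca{O}\et_{\Mbar{g,n},\bar s}$ of the form $(t_1, \dots, t_\delta, t_{\delta+1}, \dots, t_N)$ such that the universal deformation of the node $q_i$ is given by $xy = t_i$, and such that the boundary $\Mbar{g,n}\setminus\M{g,n}$ is cut out \'etale-locally by $t_1 \cdots t_\delta$; this last statement is precisely the fact, used in the setup preceding this corollary, that the boundary is a normal crossings divisor relative to $\Lambda$. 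Comparing with \cref{local_rings_on_nodal}, the singular ideal of the node $q_i$ is $(t_i)$, so the edges of $\Gamma_{\bar s}$ are labelled, up to units, by $t_1, \dots, t_\delta$. Since these elements extend to a regular system of parameters, for any $J \sub \{1, \dots, \delta\}$ the closed subscheme $V(t_j : j \in J) \sub \on{Spec}\ca{O}\et_{\Mbar{g,n},\bar s}$ is regular of codimension $\#J$, which is exactly the definition of \'etale normal crossing singularities.

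Granting this, \cref{thm:NM_over_univ_aligning_over_scheme} produces a universal N\'eron-model-admitting morphism $\beta\colon \Mtil{g,n} \ra \Mbar{g,n}$ for the universal curve, together with a finite-type N\'eron model of the corresponding jacobian whose fibrewise-connected-component-of-identity is semi-abelian; and by \cref{thm:existence_of_iniversal_aligning_for_curve_over_scheme} this $\Mtil{g,n}$ is an algebraic space over $\Mbar{g,n}$. Finally I would observe that the notion of N\'eron-model-admitting morphism for the curve $\Mbar{g,n+1}/\Mbar{g,n}$ from \cref{def:universal_blowup} coincides verbatim with that of \cref{def:UNMAM}, since the jacobian of $C_U/U = \Mbar{g,n+1}\times_{\Mbar{g,n}}\M{g,n}$ is by definition $J_{g,n}$; hence the two terminal objects agree and the corollary follows. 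The only step requiring genuine input is the verification of \'etale normal crossings, and this is standard deformation theory of stable curves; everything else is bookkeeping.
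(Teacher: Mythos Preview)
Your proposal is correct and follows essentially the same route as the paper: reduce to \cref{thm:NM_over_univ_aligning_over_scheme} and verify that the universal curve has \'etale normal crossings singularities, the latter being the only nontrivial input. The paper phrases this last step slightly differently, citing the discussion after \cite[proposition 1.5]{Deligne1969The-irreducibil} to say that the labels on $\Gamma_{\bar s}$ are distinct and form a relative normal crossings divisor over $\Lambda$, and then using regularity of $\Lambda$; your explicit ``regular system of parameters $(t_1,\dots,t_\delta,\dots,t_N)$'' argument is the same content unpacked.
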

\begin{proof}
Since $\Mbar{g,n}$ and $\Mbar{g,n+1}$ are smooth over $\Lambda$, they are in particular regular. By \cref{thm:NM_over_univ_aligning_over_scheme}, it is enough to show that $\Mbar{g,n+1}/\Mbar{g,n}$ has \'etale normal crossing singularities. Let $s$ be a geometric point of $\Mbar{g,n}$, and let $S$ be an \'etale scheme-neighbourhood of $s$, and write $\Gamma_s$ for the labelled dual graph of the fibre over $s$ of the nodal curve $C := \Mbar{g,n+1} \times_{\Mbar{g,n}}S$. By the discussion after the proof of \cite[proposition 1.5]{Deligne1969The-irreducibil}, all the labels of $\Gamma_s$ are distinct, and they form a relative normal crossing divisor over $\Lambda$. Since $\Lambda$ is regular, these two properties together imply that $C/S$ has \'etale normal crossing singularities at $s$. 
\end{proof}

\begin{proposition}\label{prop:properties_of_universal_aligning}
These objects also satisfy the following properties:
\begin{enumerate}
\item \label{item:properties}the morphism $\beta\colon \Mtil{g,n} \ra \Mbar{g,n}$ is separated and locally of finite presentation;
\item \label{item:VCP_DVR}the morphism $\beta\colon \Mtil{g,n} \ra \Mbar{g,n}$ satisfies the valuative criterion for properness for morphisms from traits to $\Mbar{g,h}$ which map the generic point of the trait to $\M{g,n}$. 
\end{enumerate}
\end{proposition}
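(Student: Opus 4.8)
The plan is to obtain \cref{item:properties} by citation and to reduce \cref{item:VCP_DVR} to the observation that every generically-smooth nodal curve over a trait is automatically aligned. For \cref{item:properties}: by \cref{thm:main} (via \cref{thm:NM_over_univ_aligning_over_scheme}) the morphism $\beta$ is precisely the universal aligning morphism for the universal curve $\Mbar{g,n+1}/\Mbar{g,n}$, which is a generically-smooth nodal curve over the reduced locally noetherian stack $\Mbar{g,n}$; I would then simply invoke \cref{thm:existence_of_iniversal_aligning_for_curve_over_scheme}, which already records that such a morphism is a separated algebraic space locally of finite presentation over its base. Nothing further is needed.

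For \cref{item:VCP_DVR}: let $T = \on{Spec}R$ be a trait with generic point $\eta$ and closed point $t$, and let $f\colon T \to \Mbar{g,n}$ be a morphism with $f(\eta)\in\M{g,n}$. Since $\beta$ is an isomorphism over $\M{g,n}$, the lift $\eta \to \Mtil{g,n}$ of $f|_\eta$ appearing in the valuative criterion is forced, so I only have to produce an extension $\tilde f\colon T \to \Mtil{g,n}$ of $f$. Uniqueness of such an extension is immediate: two extensions agree on the dense open $\eta$, hence coincide by the separatedness proved in \cref{item:properties}. For existence I would show that $f$ is itself an aligning morphism for $\Mbar{g,n+1}/\Mbar{g,n}$; the universal property of the universal aligning morphism (\cref{thm:existence_of_iniversal_aligning_for_curve_over_scheme}, \cref{thm:NM_over_univ_aligning_over_scheme}) then produces the factorisation through $\beta$, and restricting it to $\eta$ and using once more that $\beta$ is an isomorphism over $\M{g,n}$ identifies it with the prescribed lift.

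It remains to check that $f$ is aligning. It is non-degenerate because $T$ is reduced and $\eta = \M{g,n}\times_{\Mbar{g,n}}T$ is dense in $T$. Writing $C/T$ for the pulled-back universal curve, I must see that $C/T$ is aligned: over $\eta$ the fibre is smooth, so there is nothing to check, while over $t$ the relevant \'etale local ring $\ca{O}\et_{T,t}$ is the strict henselisation of $R$, again a discrete valuation ring, with maximal ideal $(\pi)$ say. Since $C/T$ is smooth over the scheme-theoretically-dense open $\eta$, \cref{local_rings_on_nodal} shows each edge label of $\Gamma_t$ is a non-zero principal ideal contained in $(\pi)$, hence of the form $(\pi)^{n(e)}$ with $n(e)\ge 1$; taking the common base $l=(\pi)$ exhibits every subgraph of $\Gamma_t$ --- \emph{a fortiori} every $2$-vertex-connected one --- as aligned, so $C/T$ is aligned. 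I do not expect a real obstacle here: the only ingredient that is not purely formal is the standard fact that the strict henselisation of a discrete valuation ring is again a discrete valuation ring (and if one wished to avoid even this, one could instead use that $T$ is regular so that regular and irregular alignment coincide, by \cref{remark:reg_irreg_aligned}). The conceptual point is simply that over a one-dimensional regular base alignment is vacuous.
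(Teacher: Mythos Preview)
Your proposal is correct and follows essentially the same approach as the paper. For \cref{item:properties} both you and the paper simply invoke \cref{thm:existence_of_iniversal_aligning_for_curve_over_scheme}; for \cref{item:VCP_DVR} the paper offers two one-line justifications --- that N\'eron models always exist over traits, or alternatively that nodal curves over traits are always aligned --- and your argument is a careful unpacking of the second of these, with the factorisation through $\beta$ coming from the universal property exactly as you describe.
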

\begin{proof}\leavevmode
\begin{enumerate}
\item
Follows from the same properties for the universal aligning scheme (\cref{thm:existence_of_iniversal_aligning_for_curve_over_scheme}) and the fact that these properties descend along fppf morphisms.
\item Immediate from the definition of the universal N\'eron-model-admitting scheme because N\'eron models always exists over traits (alternatively, because nodal curves over traits are always aligned). 

\end{enumerate}
\end{proof}

\subsection{Base-change and smoothness}
If $\Lambda$ is a perfect field then $\Mtil{g,n}$ is automatically smooth over $\Lambda$ since it is regular. The purpose of this section is to show that the same holds when $\Lambda$ is not assumed to be a field. 

Because we want to talk about how $\Mtil{g,n}$ changes as we change the base ring $\Lambda$, we will in this section write $\Mtil{g,n}^\Lambda \ra \Mbar{g,n}^\Lambda$ for the universal N\'eron model admitting morphism over $\Lambda$. 

\begin{theorem}
Let $\Lambda$ be a regular scheme. Then the structure morphism $\pi\colon \Mtil{g,n}^\Lambda \ra \Lambda$ is smooth. 
\end{theorem}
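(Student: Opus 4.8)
The plan is to deduce the theorem from two facts that we already have in hand: that $\Mtil{g,n}^\Lambda$ is regular, and that its formation commutes with base change on $\Lambda$. First I would record the regularity: since $\Lambda$ is regular, $\Mbar{g,n}^\Lambda$ is regular and $\Mbar{g,n+1}^\Lambda/\Mbar{g,n}^\Lambda$ has \'etale normal crossings singularities (this is exactly the input to the proof of \cref{thm:main}), so the proof of \cref{thm:NM_over_univ_aligning_over_scheme} together with \cref{thm:Stilde_regular} shows $\Mtil{g,n}^\Lambda$ is regular. Next I would record base change: for any $\Lambda' \to \Lambda$ with $\Lambda'$ reduced, the morphism $\Mbar{g,n}^{\Lambda'} \to \Mbar{g,n}^\Lambda$ is non-degenerate, so \cref{lem:tilde_pullback} (applied to the universal aligning morphism, which by \cref{thm:NM_over_univ_aligning_over_scheme} coincides with the universal N\'eron-model-admitting morphism here) gives a canonical isomorphism $\Mtil{g,n}^{\Lambda'} \cong \Mtil{g,n}^\Lambda \times_\Lambda \Lambda'$.

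Applying base change along $\on{Spec} k(\lambda) \to \Lambda$ and then along $\on{Spec}\overline{k(\lambda)} \to \on{Spec} k(\lambda)$ for a point $\lambda \in \Lambda$, we see that the fibre $\pi^{-1}(\lambda) = \Mtil{g,n}^{k(\lambda)}$ has regular base change $\Mtil{g,n}^{\overline{k(\lambda)}}$ to the algebraic closure; hence every fibre of $\pi$ is geometrically regular over its residue field, equivalently (being locally of finite type over a field) smooth over its residue field. Since $\beta$ is locally of finite presentation by \cref{prop:properties_of_universal_aligning} and $\Mbar{g,n}^\Lambda/\Lambda$ is of finite type with $\Lambda$ excellent, $\pi$ is locally of finite presentation. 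A morphism that is locally of finite presentation, flat, and has geometrically regular fibres is smooth, so it remains only to prove that $\pi$ is flat. For this I would invoke miracle flatness: $\Mtil{g,n}^\Lambda$ is regular, hence Cohen--Macaulay, $\Lambda$ is regular, and $\pi$ is locally of finite type, so $\pi$ is flat at a point $x$ once $\dim\ca{O}_{\Mtil{g,n}^\Lambda,x} = \dim\ca{O}_{\Lambda,\pi(x)} + \dim\ca{O}_{(\Mtil{g,n}^\Lambda)_{\pi(x)},x}$. To check this equality, recall that $\beta$ is an isomorphism over the dense open $\M{g,n}^\Lambda \subseteq \Mbar{g,n}^\Lambda$ and that $U := \beta^{-1}\M{g,n}^\Lambda$ is dense in $\Mtil{g,n}^\Lambda$ (immediate from the construction of the universal aligning morphism as a gluing of closures of images of $U$); over $U$ the morphism $\pi$ is smooth of relative dimension $3g-3+n$, so the displayed equality holds there. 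Because $\Mtil{g,n}^\Lambda$ is regular its local rings are domains and it is universally catenary, so each point $x$ lies on a unique irreducible component whose generic point lies in $U$, and along that component the relative dimension of $\pi$ is constant equal to $3g-3+n$; localising $\Lambda$ at $\pi(x)$ then gives the equality at $x$. By miracle flatness $\pi$ is flat, and combined with the fibrewise geometric regularity this gives smoothness of $\pi$.

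The one genuinely delicate step is the dimension bookkeeping needed to apply miracle flatness at boundary points of $\Mtil{g,n}^\Lambda$; this is where the full strength of \cref{thm:Stilde_regular} (rather than merely Cohen--Macaulayness) is used. An alternative, more hands-on route that I would mention but not carry out would upgrade \cref{lemma:basic_regularity} and \cref{lemma:NCD_preverved} to relative statements over $\Lambda$: if $R$ is smooth over $\Lambda$ and $x_1,\dots,x_d\in R$ are part of a relative normal crossings divisor over $\Lambda$, then the ring $R'$ of \cref{lemma:basic_regularity} is again smooth over $\Lambda$ and the aligning element together with the remaining $x_j$ is again a relative normal crossings divisor. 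This can be proved by the infinitesimal lifting criterion: given a square-zero thickening $B \twoheadrightarrow B/I$ over $\Lambda$ and a map $R' \to B/I$, first lift the units $u_i$ to units of $B$ and correct them so that $\prod u_i^{n_i} = 1$ exactly (possible since $\gcd(n_i)=1$), then lift $a$ arbitrarily, then lift $R \to B/I$ to $R \to B$ so that the relations $x_i = a^{m_i}u_i$ hold on the nose (possible since étale-locally the $x_i$ are part of a coordinate system relative to $\Lambda$), and finally extend to $R' \to B$. One then reruns the induction of \cref{lemma:NCD_preverved} to conclude that $R_M$, and hence $\Mtil{g,n}^\Lambda$ locally, is smooth over $\Lambda$. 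I would present the base-change argument as the main proof and relegate this second approach to a remark.
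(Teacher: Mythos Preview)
Your proposal is correct and follows essentially the same route as the paper's proof: regularity of $\Mtil{g,n}^\Lambda$ via \cref{thm:Stilde_regular}, identification of fibres with $\Mtil{g,n}^k$ via \cref{lem:tilde_pullback} to get geometric regularity of fibres, and miracle flatness (\cite[6.1.5]{Grothendieck1965EGA.IV.2}) together with a dimension count using density of $\M{g,n}$. The only difference in presentation is that the paper verifies the dimension equality by computing each side separately---it uses \cref{lem:tilde_pullback} \emph{again} to identify $X_y$ with $\Mtil{g,n}^{k(y)}$, which is regular with $\M{g,n}^{k(y)}$ dense, and reads off $\dim\ca{O}_{X_y,x}=3g-3+n$ directly---whereas you propagate the equality from the open $U$ via catenary arguments. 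The paper's route is slightly cleaner here and avoids the somewhat vague phrase ``the relative dimension of $\pi$ is constant along the component'', but the content is the same. Your alternative sketch via a relative version of \cref{lemma:basic_regularity} is not in the paper and would make a reasonable remark.
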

\begin{proof}
Clearly $\pi$ is locally of finite presentation. We will show that $\pi$ is flat and has regular geometric fibres. The regularity of the fibre over a geometric point $\on{Spec}k \ra \Lambda $ is immediate from \cref{lem:tilde_pullback} and the regularity of $\Mtil{g,n}^k$. For flatness we will apply \cite[6.1.5]{Grothendieck1965EGA.IV.2}. To ease notation, write $X = \Mtil{g,n}^\Lambda$. We may replace $\Mbar{g,n}^{\Lambda}$ and $\Mtil{g,n}^\Lambda$ by compatible \'etale covers by schemes, and so we will for the remainder of the proof abuse notation by treating them as schemes. Let $x$ be a point of $X = \Mtil{g,n}^\Lambda$ lying over a point $y$ of $\Lambda$. Because $X = \Mtil{g,n}^\Lambda$ and $\Lambda$ are both regular, it is enough by \cite[6.1.5]{Grothendieck1965EGA.IV.2} to show that 
\begin{equation*}
\on{dim} \ca{O}_{X,x} = \on{dim}\ca{O}_{\Lambda, y} + \on{dim}\ca{O}_{X_y, x}. 
\end{equation*}
Now $\ca{M}_{g,n}^\Lambda$ is by construction dense in the regular scheme $X = \Mtil{g,n}^\Lambda$, so $\on{dim} \ca{O}_{X,x} = \on{dim}\ca{O}_{\Lambda, y} + 3g-3 + n$. On the other hand, $\ca{M}_{g,n}^y$ is dense in the regular scheme $X_y = \Mtil{g,n}^y$ (\cref{lem:tilde_pullback} again), so $\on{dim}\ca{O}_{X_y, x} = 3g-3+n$, and we are done. 
\end{proof}

\section{A worked example}\label{sec:example}
In this section we give an example of a non-aligned nodal curve. We compute its universal aligning scheme, and describe the N\'eron model of its jacobian over the universal aligning scheme. 

Construct a stable 2-pointed curve over $\bb{C}$ by glueing two copies of $\bb{P}^1_\bb{C}$ at $(0:1)$ and $(1:0)$, and marking the point $(1:1)$ on each copy. Then define $C/S$ to be the universal deformation as a 2-pointed stable curve. Choose coordinates such that $S = \on{Spec}\bb{C}[[x,y]]$, and $C$ is smooth over the open subset $U = D(xy)$. Call the sections $p$ and $q$. 

Now the graph over the closed point of $S$ is a 2-gon, with one edge labelled by $(x)$ and the other by $(y)$. The graph over the generic point of $(x=0)$ is a 1-gon with edge labelled by $(y)$, and similarly the graph over the generic point of $(y=0)$ is a 1-gon with edge labelled by $(x)$. All other fibres are smooth. In particular, $C/S$ is aligned except at the closed point, which is a controlling point.

\subsection{The universal aligning morphism}

We now describe the universal aligning morphism. We will not follow through the construction given in \cref{sec:scheme_representing_relations}, but will instead give a more geometric picture via a sequence of blowups of $S$. 

 Set $S_0 = S$, and let $D_0$ and $E_0$ be the divisors given by $x=0$ and $y=0$ respectively. Let $Z_0$ denote the locus where $D_0 \cup E_0$ is singular (i.e. the closed point of $S$). Let $U_0 = S_0 \setminus Z_0$. 

Now set $S_1$ to be the blowup of $S_0$ at $Z_0$, and let $D_1$ and $E_1$ be the pullbacks of $D_0$ and $E_0$ to $S_1$. Let $Z_1$ be the locus where $D_1 \cup E_1$ is singular, and let $U_1 = S_1 \setminus Z_1$. Let $\phi_1\colon U_0 \ra U_1$ be the unique $S$-morphism (an open immersion). We proceed like this, inductively defining an infinite chain of blowups $S_{i+1} \ra S_i$ and open immersions $\phi_i\colon U_i \ra U_{i+1}$. Then the universal aligning morphism $\tilde{S} \ra S$ is the colimit of the open immersions $\phi_i$. The morphism $\tilde{S} \ra S$ is separated, locally of finite type, is an isomorphism outside the closed point of $S$, but is not quasi-compact (the closed fibre is an infinite union of copies of $\bb{G}_m$). Note that $\tilde{S}$ is integral and is smooth over $\bb{C}$.

\section{Bounded-thickness substacks of $\Mtil{g,n}$}
In some applications it can be useful to consider only a part of the stack $\Mtil{g,n}$ - in particular, in applications where quasi-compactness is important. In this section we define certain open substacks of $\Mtil{g,n}$ with good universal properties. 

We write $\Gamma'$ for the graph obtained from $\Gamma$ by deleting all loops. 

\begin{definition}
Let $S$ be a scheme, $C/S$ a nodal curve, and $\bar{s}\in S$ a geometric point. Write $G_1, \cdots, G_n$ for the maximal 2-vertex-connected subgraphs of the graph $\Gamma'_s$. Let $e \ge 0$ be an integer. We say $C/S$ is \emph{$e$-strongly aligned} at $s$ if there exists a sequence $a_1, \cdots, a_n$ of non-zero elements of $\ca{O}^{et}_{S,s}$ such that 
\begin{enumerate}
\item the $a_i$ form a `weak normal-crossings-divisor' (i.e. for every subset $J \sub \{1, \cdots, n\}$, the subscheme 
$$V(a_j : j \in J) \sub \on{Spec}\ca{O}_{S,s}$$ is regular - we impose no condition on the codimension);
\item for each $i$ and each edge $e$ of $G_i$, there exists $0 \le r \le e$ such that 
$$\on{label}(e) = (a_i)^r. $$ 
\end{enumerate}
We say $C/S$ is \emph{$e$-strongly aligned} if it is $e$-strongly aligned at $s$ for all geometric points $s$ of $S$. 
\end{definition}

\begin{definition}\label{def:Mle}
Let $g$, $n$ be non-negative integers such that $2g - 2 + n >0$. Let $e \ge 0$ be an integer. Choose an \'etale cover $\bigsqcup_{i \in I}S_i \ra \Mbar{g,n}$ by a scheme such that on each $S_i$ the curve has a controlling point $\frak{s}_i$. For each $i \in I$, write $\bb{M}^{\le e}_i$ for the set of thickness functions on $C_{\frak{s}_i}$ which take values in $\{0, \cdots, e-1, e\} \sub \bb{Z}_{\ge 0}$. Now define $\tilde{S_i}^{\le e}$ to be the open subscheme of $\tilde{S_i}$ covered by the $S_M$ as $M$ runs over $\bb{M}_i^{\le e}$. Define $\Mtil{g,n}^{\le e}$ to be the image of $\bigsqcup_{i \in I}\tilde{S}_i^{\le e}$ under the natural \'etale map $\bigsqcup_{i \in I}\tilde{S}_i \ra \Mtil{g,n}$. 
\end{definition}

\begin{remark}
\begin{enumerate}
\item
A-priori the definition of $\Mtil{g,n}^{\le e}$ depends on the choice of cover $\bigsqcup_{i \in I}\tilde{S}_i \ra \Mbar{g,n}$, but we will see in \cref{lem:lee} that this is not the case; 
\item Note that each $\tilde{S}_i^{\le e} \ra S_i$ is quasi-compact, since $\bb{M}_i^{\le e}$ is finite. By \'etale descent of quasi-compactness, the same holds for $\Mtil{g,n}^{\le e} \ra \Mbar{g,n}$. 
\end{enumerate}
\end{remark}

%

\begin{lemma}\label{lem:lee}
Fix integers $g$, $n$, $e$ as above. Then the category of $e$-strongly aligned stable $n$-pointed curves of genus $g$ has a terminal object. This terminal object had a natural map to $\Mtil{g,n}$ (since $e$-strongly aligned implies aligned), and this map is an open immersion, whose image is exactly $\Mtil{g,n}^{\le e}$. 
\end{lemma}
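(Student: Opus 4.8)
The goal is to show three things: (i) the category of $e$-strongly aligned stable $n$-pointed curves of genus $g$ has a terminal object; (ii) the natural map from this terminal object to $\Mtil{g,n}$ is an open immersion; and (iii) its image is precisely $\Mtil{g,n}^{\le e}$. The strategy is to work \'etale-locally on $\Mbar{g,n}$ using the controlled cover $\bigsqcup_{i\in I}S_i \ra \Mbar{g,n}$ fixed in \cref{def:Mle}, exactly as for the construction of $\tilde S \ra S$ in \cref{sec:scheme_representing_relations}, and to identify $e$-strong-alignment with a bounded-thickness condition. First I would observe that on each $S_i$ with controlling point $\frak{s}_i$, a map $f\colon T \ra S_i$ is aligning and lands in $\tilde{S_i}^{\le e}$ (i.e. factors through $\bigcup_{M \in \bb{M}_i^{\le e}}S_M$) if and only if $f^*C/T$ is $e$-strongly aligned. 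One direction is essentially the argument of \cref{lem:loc_M_ali}: given $e$-strong alignment, the thickness function $M$ constructed there (recording the exponents $m_e$ with $\on{label}(e) = a_\bb{E}^{m_e}$) takes values in $\{0,\dots,e\}$, because $e$-strong alignment bounds these exponents by $e$; conversely, a factorisation through some $S_M$ with $M \in \bb{M}_i^{\le e}$ produces, via the aligning elements $a_\bb{E}$ of \cref{def:rings_of_relations} pulled back to $T$, exactly a system of elements realising $e$-strong alignment (the weak normal-crossings condition on the $a_\bb{E}$ holding because the $a_\bb{E}$ are coordinates, cf. \cref{lemma:NCD_preverved}, and surviving base change).

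\textbf{Assembling the terminal object.} Granting this local characterisation, the universal property of $\tilde{S_i}^{\le e} \ra S_i$ among $e$-strongly aligned curves follows from that of $\tilde{S_i} \ra S_i$ (\cref{lem:uam_controlled}): any $e$-strongly aligned $f\colon T \ra S_i$ is aligning, hence factors uniquely through $\tilde{S_i}$, and by the characterisation the factorisation lands in the open subscheme $\tilde{S_i}^{\le e}$; separatedness of $\beta$ gives uniqueness. To globalise, I would check that the formation of $\tilde{S_i}^{\le e}$ is compatible with \'etale base change between the $S_i$ (this is where \cref{lem:tilde_pullback} and the fact that $e$-strong alignment is \'etale-local on the target enter), so the $\tilde{S_i}^{\le e}$ glue to a well-defined open substack $\Mtil{g,n}^{\le e} \subseteq \Mtil{g,n}$ independent of the chosen cover --- this simultaneously proves (iii) and the independence claimed in the remark preceding \cref{lem:lee}. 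That $\Mtil{g,n}^{\le e}$ is then the terminal $e$-strongly aligned stable curve, with the map to $\Mtil{g,n}$ an open immersion, is now formal: the open immersion is immediate from the construction as an open substack, and the universal property descends along the \'etale cover since $e$-strong alignment is an \'etale-local notion and the objects in play are algebraic spaces (effectivity of descent, as in \cref{thm:existence_of_iniversal_aligning_for_curve_over_scheme}).

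\textbf{Main obstacle.} The delicate point is the equivalence ``$e$-strongly aligned $\iff$ factors through $\bigcup_{M\in\bb{M}_i^{\le e}}S_M$'', specifically matching the weak-normal-crossings clause in the definition of $e$-strong alignment with the structure of the aligning elements $a_\bb{E}$. One must be careful that the definition of $e$-strong alignment only demands that the $a_i$ form a \emph{weak} normal crossings divisor (no codimension condition) and allows $\on{label}(e) = (a_i)^0$, so that an edge can legitimately have different witnesses in different 2-vertex-connected components; this is exactly matched by the fact that a thickness function is allowed to vanish on edges and that $\on{Part}(\Gamma_M)$ only sees the non-contracted structure. I expect the bookkeeping relating the maximal $2$-vertex-connected subgraphs $G_i$ of $\Gamma'_s$ (which appear in the definition of strong alignment) to the blocks $\bb{E} \in \on{Part}(\Gamma_M)$ (which appear in the construction of $S_M$), together with checking that the exponents are genuinely bounded by $e$ and not merely finite, to be the step requiring the most care; everything else is a routine transcription of the arguments already given in \cref{sec:scheme_representing_relations} and \cref{sec:regularity_for_correlation}.
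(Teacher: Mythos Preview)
Your approach is essentially the same as the paper's, only spelled out in more detail; the paper is very terse here, asserting that the curve over $\tilde{S}_i^{\le e}$ is $e$-strongly aligned (by \cref{lemma:NCD_preverved} and the \'etale-NCD property of the universal curve) and that any $e$-strongly aligned $T$ factors through $\tilde{S}_i^{\le e}$ ``from the definitions'', then descending. You have correctly identified the bookkeeping between the $G_i$ and the blocks $\bb{E}\in\on{Part}(\Gamma_M)$ as the only non-formal step.

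One caution: the biconditional you set up (``$f\colon T\to S_i$ factors through $\tilde{S}_i^{\le e}$ $\iff$ $f^*C/T$ is $e$-strongly aligned'') is stronger than what is needed, and the direction $\Rightarrow$ as you phrase it is not quite right. You justify the weak-normal-crossings clause on $T$ by saying the $a_{\bb{E}}$ are coordinates on $S_M$ and that this ``survives base change'' --- but regularity of the loci $V(a_j:j\in J)$ does \emph{not} survive arbitrary base change to $T$. Fortunately you do not need this: for $\Mtil{g,n}^{\le e}$ to be terminal you only need (a) the curve over $\Mtil{g,n}^{\le e}$ itself is $e$-strongly aligned, i.e.\ the weak-NCD condition holds at geometric points of the $S_M$ themselves (where \cref{lemma:NCD_preverved} applies directly), and (b) any $e$-strongly aligned $T$ factors through $\Mtil{g,n}^{\le e}$, which is your $\Leftarrow$ direction. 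So drop the claim that the weak-NCD condition survives base change and instead verify it only on $S_M$; the rest of your argument then goes through unchanged.
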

\begin{proof}
In this proof we retain the notation from \cref{def:Mle} (in particular the cover $\bigsqcup_{i \in I}\tilde{S}_i \ra \Mbar{g,n}$). 

The property of being $e$-strongly aligned is local in the \'etale topology. Observe that the pullback of the canonical stable curve over $S_i$ to $\tilde{S}_i^{\le e}$ is $e$-strongly aligned, by definition of $\tilde{S}_i^{\le e}$ and the fact that the universal curve over $\Mbar{g,n}$ (and hence over $S_i$) has \'etale normal-crossings singularities. Combining these observations, we see that the pullback of the universal stable curve over $\Mbar{g,n}$ to $\Mtil{g,n}^{\le e}$ is itself $e$-strongly aligned. 

Let $f\colon T \ra \Mbar{g,n}$ be a non-degenerate morphism, with resulting stable curve $C/T$. Suppose that $C$ is $e$-strongly aligned. In particular, $C/T$ is aligned, and so $f$ factors (uniquely) via $\Mtil{g,n}$. If we can show that $f$ in fact factors via $\Mtil{g,n}^{\le e}$, then we are done. 

Since being $e$-strongly-aligned is \'etale local, we may assume that $f$ factors via the \'etale cover $\bigsqcup_{i \in I}\tilde{S}_i \ra \Mbar{g,n}$. Then it is clear from the definitions that $f$ factors via $\bigsqcup_{i \in I}\tilde{S}_i^{\le e}$ and hence via $\Mtil{g,n}^{\le e}$, as required. 
\end{proof}

Finally, we observe that the universal N\'eron model has a particularly nice universal property for 1-strongly-aligned curves:
\begin{lemma}
Let $C/S$ be stable, 1-strongly aligned and smooth over a dense open. Let $f\colon S \ra \Mtil{g,n}$ be the tautological map. Write $N_{g,n}$ for the N\'eron model over $\Mtil{g,n}$. Then $f^*N_{g,n}$ is the N\'eron model of the jacobian of $C$. 
\end{lemma}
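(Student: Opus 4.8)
The plan is to reduce this to the defining universal property of the N\'eron model over $\Mtil{g,n}$ established in \cref{thm:NM_over_univ_aligning_over_scheme}. First I would observe that, since $C/S$ is $1$-strongly aligned, it is in particular aligned, so the tautological classifying map $f\colon S \to \Mbar{g,n}$ factors (uniquely up to unique isomorphism) via the universal aligning morphism $\beta\colon \Mtil{g,n} \to \Mbar{g,n}$; this is exactly the content of \cref{lem:lee} (indeed $f$ even factors via the open substack $\Mtil{g,n}^{\le 1}$). Call the resulting map $f\colon S \to \Mtil{g,n}$ again, by abuse of notation.

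Next I would invoke \cref{lem:tilde_pullback}: the universal aligning morphism for $C/S$ is $S$ itself, since $C/S$ is already aligned and $S$ is regular (here one uses that $1$-strong alignment forces, via the weak normal-crossings condition on the $a_i$ together with regularity of $S$, that $C/S$ is \emph{regularly} aligned, cf. \cref{remark:reg_irreg_aligned}, so that no base change is needed). Hence $f^*\Mtil{g,n} \cong S$ as aligning morphisms. Now \cref{thm:NM_over_univ_aligning_over_scheme} tells us that $\Mtil{g,n} \to \Mbar{g,n}$ is a \emph{universal N\'eron-model-admitting} morphism, equivalently that $\beta$ is the universal regular aligning morphism and that $N_{g,n}/\Mtil{g,n}$ is the N\'eron model of $J_{g,n}$. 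Since $S$ is regular and $U \times_{\Mbar{g,n}} S$ is dense in $S$, the morphism $f\colon S \to \Mtil{g,n}$ is itself a N\'eron-model-admitting morphism for $C/S$ (in the sense of \cref{def:universal_blowup}), and the universal property of $N_{g,n}$ — more precisely, the fact that formation of N\'eron models over the universal regular aligning morphism commutes with pullback along non-degenerate morphisms from regular bases — shows $f^*N_{g,n}$ is the N\'eron model of the jacobian of $C_U/U$.

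To make the last point airtight I would argue directly from \cref{def:nm}: $f^*N_{g,n}$ is smooth, separated, and a group algebraic space over $S$ (these properties are stable under base change), and it restricts to the jacobian of $C$ over the dense open where $C$ is smooth; the N\'eron mapping property for $f^*N_{g,n}$ then follows because any smooth $T \to S$ composes with $f$ to give a smooth $T \to \Mtil{g,n}$, and we may apply the mapping property of $N_{g,n}$ and then use uniqueness to descend the extension back along the (separated) map $f^*N_{g,n} \to N_{g,n}$. The main obstacle I anticipate is not any single hard estimate but the bookkeeping needed to confirm that $1$-strong alignment really does imply \emph{regular} alignment over a regular base (so that \cref{lem:tilde_pullback} applies without an auxiliary blow-up), and to check that ``N\'eron model'' in the sense used here is genuinely stable under the relevant pullback — i.e. that the universal property in \cref{thm:NM_over_univ_aligning_over_scheme} is being quoted in exactly the form needed. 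Once those compatibilities are pinned down, the proof is essentially a one-line appeal to the universal property.
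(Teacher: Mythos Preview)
Your argument has a genuine gap at the step where you try to verify the N\'eron mapping property for $f^*N_{g,n}$ directly. You write that ``any smooth $T \to S$ composes with $f$ to give a smooth $T \to \Mtil{g,n}$'', but this is only true if $f$ itself is smooth, which it is not in general (e.g.\ $S$ could be a trait while $\Mtil{g,n}$ has large dimension). Without smoothness of the composite $T \to \Mtil{g,n}$ you cannot invoke the mapping property of $N_{g,n}$, and in fact N\'eron models do \emph{not} commute with arbitrary non-degenerate base change between regular schemes --- this is the whole reason the lemma is worth stating. The earlier appeal to a ``fact that formation of N\'eron models \ldots\ commutes with pullback'' is circular: that is precisely what is being proved here in the special $1$-strongly-aligned case.

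The paper's proof takes a completely different route that avoids this obstruction. The key observation you are missing is that a $1$-strongly-aligned stable curve $C/S$ has \emph{regular total space} $C$ (each label is either a unit or one of the $a_i$, and the weak-NCD condition on the $a_i$ then forces regularity of $C$). The N\'eron model $N_{g,n}$ is constructed as the quotient of the relative Picard space by the closure of the unit section; both the Picard space and, crucially, the closure of the unit section (because it is \emph{flat}) commute with the base change $f$. Hence $f^*N_{g,n}$ is the quotient of $\on{Pic}_{C/S}$ by the closure of its unit section, and since $C$ is regular this quotient is the N\'eron model by the main theorem of \cite{Holmes2014Neron-models-an}. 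So the argument rests on the explicit Picard-theoretic description of $N_{g,n}$ together with regularity of $C$, not on any abstract base-change property of N\'eron models.
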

\begin{proof}
A stable curve which is 1-strongly-aligned is necessarily regular. The Picard space of the universal curve pulls back along $f$ to the Picard space of $C/S$, and the same holds for the closure of unit section (since the latter is flat). Hence the pullback  $f^*N_{g,n}$ is the quotient of the Picard space of a regular curve by the closure of the unit section, and is hence the N\'eron model. 
\end{proof}

\section{Relation to the stack $\ca{P}_{d,g}$ of Caporaso}\label{sec:caporaso}
 Let $g \ge 3$, and let $d$ be an integer such that $\on{gcd}(d-g+1,2g-2)=1$. In \cite{Caporaso2008Neron-models-an}, Caporaso constructs a smooth morphism of stacks $p\colon \ca{P}_{d,g} \ra \Mbar{g}$ and an isomorphism from $\ca{P}_{d,g} \times_{\Mbar{g}} \M{g}$ to the degree-$d$ Picard scheme of the universal curve over $\Mbar{g}$. This morphism $p$ is relatively representable by schemes, and satisfies the following property: 
 
 \emph{given a trait $B$ with generic point $\eta$ and a regular stable curve $X \ra B$, write $f\colon B \ra \Mbar{g}$ for the moduli map. Then $f^*\ca{P}_{d,g}$ is the N\'eron model of $\on{Pic}^d_{X_\eta/\eta}$. }
 
 More concisely, we might say that $\ca{P}_{d,g}$ gives a partial compactification of the degree-$d$ universal jacobian $J_{g}$ which has a good universal property for test curves $B$ in $\Mbar{g}$ which meet the boundary with `low multiplicity' (this is equivalent to the given stable curve $X/B$ being regular). In contrast, in this paper we construct a partial compactification $N_{g}$ of the universal jacobian $J_{g}$, with a natural group structure, and which has a good universal property for \emph{all} test curves to (even aligned morphisms to) $\Mbar{g}$, but at the price of `blowing up the boundary' of $\Mbar{g}$. In the remainder of this section, we will make the comparison more precise. 
 
Note that the condition $\on{gcd}(d-g+1,2g-2)=1$ precludes the possibility that $d=0$ (unless $g=2$), and so to compare Caporaso's construction to $\Mtil{g}$ we must consider N\'eron models of degree-$d$ parts of the jacobian for $d \neq 0$. These are not group schemes, but the N\'eron mapping property still makes sense. This presents no new difficulties:
\begin{theorem}
Let $S$ be a regular separated stack, and $C/S$ an aligned nodal curve, smooth over a dense open $U \hra S$. Assume that $C$ is regular. Fix $d \in \bb{Z}$, and let $J_d/U$ denote the degree-$d$ jacobian of $C$ over $U$. Then $J_d$ admits a N\'eron model over $S$. 
\end{theorem}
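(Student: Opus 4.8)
The plan is to reduce to the degree-zero case treated in \cite[theorem 1.2]{Holmes2014Neron-models-an}, by trivialising the torsor $J_d$ after a faithfully flat base change and then descending. Throughout, $J_0$ denotes the ordinary (degree-zero) jacobian of $C_U/U$; it is an abelian scheme over $U$, and $J_d$ is a $J_0$-torsor over $U$.

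First I would reduce to the case that $S$ is a scheme: a N\'eron model is characterised by the universal property of \cref{def:nm}, and N\'eron models descend along smooth covers by \cite[lemma 6.1]{Holmes2014Neron-models-an}; moreover regularity of $S$ and of $C$, alignment of $C/S$, and density of $U$ all pass to a smooth scheme-cover. So assume $S$ is a scheme. The smooth locus of $C/S$ is dense in every fibre, so the open subscheme $C^{\mathrm{sm}}\subseteq C$ on which $C\to S$ is smooth maps smoothly and surjectively to $S$; hence there is an \'etale surjection $S'\to S$ admitting a section $\sigma\colon S'\to C'$ of $C':=C\times_S S'$ through its smooth locus. Writing $U'=U\times_S S'$, the section $\sigma$ restricts over $U'$ to a section of $C'_{U'}/U'$ through the smooth locus, so $\mathcal{O}(d\sigma)$ is a line bundle of relative degree $d$ on $C'_{U'}/U'$, and tensoring by it defines an isomorphism of $U'$-schemes $t\colon J_0|_{U'}\xrightarrow{\sim}J_d|_{U'}$ trivialising the torsor.

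Over $S'$ the curve $C'$ is a nodal model of $C'_{U'}/U'$ which is regular (base change of the regular $C$ along the \'etale $S'\to S$) and aligned (alignment is flat-local, \cref{remark:fppf_descent}), and $S'$ is regular, so \cite[theorem 1.2]{Holmes2014Neron-models-an} provides a finite-type N\'eron model $N'$ of $J_0|_{U'}$ over $S'$ with semi-abelian identity component. Since the N\'eron mapping property of \cref{def:nm} is stated for arbitrary $U'$-morphisms, and not merely homomorphisms, transporting the structure along $t$ exhibits $N'$ as a N\'eron model of $J_d|_{U'}$ over $S'$. I emphasise that this identification is not compatible with the group structure on $N'$; the eventual N\'eron model of $J_d$ carries no group structure, so one must not attempt to descend it as a group algebraic space.

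It remains to descend $N'$ from $S'$ to $S$. Set $S''=S'\times_S S'$ and $U''=U\times_S S''$; as $S'\to S$ is flat, $U''$ is dense in $S''$. N\'eron models in the sense of \cref{def:nm} commute with smooth base change --- immediate from the universal property, since anything smooth over $S''$ is smooth over $S'$ --- so the two pullbacks of $N'$ to $S''$ are both N\'eron models of $J_d|_{U''}$, hence canonically isomorphic by uniqueness of N\'eron models, and the cocycle condition on the triple overlap follows from the same uniqueness. Because $N'$ is separated this descent datum is effective, producing a smooth separated algebraic space $N_d\to S$ with $N_d|_U\cong J_d$. Finally the N\'eron mapping property of $N_d/S$ is routine: given $T\to S$ smooth and representable by algebraic spaces and $f\colon T_U\to J_d$, pulling back to $S'$ and invoking the N\'eron property of $N'$ yields a unique extension over $T\times_S S'$ whose two pullbacks to $T\times_S S''$ agree, hence descend to $F\colon T\to N_d$; and $F$ is unique since $T_U$ is schematically dense in $T$ ($T$ is reduced, being smooth over the regular $S$, and $T_U$ is dense open) and $N_d$ is separated. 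The only step I expect to be substantial is the appeal to \cite[theorem 1.2]{Holmes2014Neron-models-an}; the rest is torsor-twisting and faithfully flat descent.
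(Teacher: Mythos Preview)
Your argument is correct, but it follows a genuinely different route from the paper's.

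The paper constructs the N\'eron model of the whole relative Picard functor at once: it observes that the closure of the unit section in $\on{Pic}_{C/S}$ coincides with its closure in the open-and-closed subfunctor $\on{Pic}^{[0]}_{C/S}$, hence is flat over $S$; the quotient $N$ of $\on{Pic}_{C/S}$ by this flat closed subgroup therefore exists as an algebraic space, and the proof of the main theorem of \cite{Holmes2014Neron-models-an} (written there for $\on{Pic}^{[0]}$) carries over to show that $N$ is a N\'eron model of $\on{Pic}_{C_U/U}$. The desired N\'eron model of $J_d$ is then simply the closure of $J_d$ inside $N$, i.e.\ one open-and-closed component of $N$.

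Your approach instead trivialises the $J_0$-torsor $J_d$ on an \'etale cover via a section through the smooth locus, invokes the degree-zero theorem as a black box, and descends. This is perfectly valid; it is the standard ``twist a torsor, then descend'' manoeuvre. One small correction: the effectivity of your descent datum has nothing to do with $N'$ being separated --- it is the general fact that algebraic spaces form a stack for the \'etale (even fppf) topology. Separatedness is what ensures that the descended $N_d$ is separated, and is what underlies the uniqueness arguments in your final paragraph. The paper's route has the advantage of an explicit global description (a component of a quotient of the relative Picard space) and of producing the N\'eron models for all degrees uniformly; your route has the advantage of not needing to revisit the proof of \cite{Holmes2014Neron-models-an} to see that it works for all of $\on{Pic}$ rather than only $\on{Pic}^{[0]}$.
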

\begin{proof}
The closure of the unit section in $\on{Pic}_{C/S}$ coincides with the closure of the unit section in $\on{Pic}^{[0]}_{C/S}$, since the latter is open and closed in $\on{Pic}_{C/S}$. In particular, the closure of the unit section in $\on{Pic}_{C/S}$ is flat over $S$, and so the quotient $N$ of $\on{Pic}_{C/S}$ by the closure of the unit section exists as an algebraic space over $S$. It is easily verified that $N$ is the N\'eron model of $\on{Pic}_{C_U/U}$  --- the proof of the main theorem of \cite{Holmes2014Neron-models-an} carries over almost verbatim. Then the closure of $J_d$ inside $N$ is exactly the N\'eron model of $J_d$ that we seek. 
\end{proof}

%
%
%
%
%
%
%
%
%
%

The next proposition shows that the restriction of the N\'eron model of the universal jacobian to the open substack $\Mtil{g}^{\le 1}$ is given by the pullback of Caporaso's construction. 


\begin{proposition}
Let $g \ge 3$ and $d$ be integers such that $\on{gcd}(d-g+1,2g-2)=1$. Write $\tilde{\ca{P}}$ for the pullback of $\ca{P}_{d,g}$ to $\Mtil{g}^{\le 1}$, and write ${N}_{g}$ for the N\'eron model of $J_{g}$ over $\Mtil{g}^{\le 1}$. Then the canonical map $h\colon \tilde{\ca{P}} \ra {N}_{g}$ given by the N\'eron mapping property is an isomorphism. 
\end{proposition}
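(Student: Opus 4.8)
The plan is to check that $h$ is an isomorphism after pulling back along the étale cover $\bigsqcup_{i\in I}\tilde{S}_i^{\le 1}\ra \Mtil{g}^{\le 1}$ coming from \cref{def:Mle}, since being an isomorphism is étale-local on the target. So fix one such $S_i$ with controlling point $\frak{s}_i$ and a thickness function $M$ taking values in $\{0,1\}$, and work over the scheme $S_M$; write $X/S_M$ for the pulled-back stable curve. The key observation is that a $1$-strongly-aligned stable curve is automatically regular (as noted just before the proposition, and as in the proof of the last lemma of \cref{sec:caporaso}), and $X/S_M$ is exactly such a curve by the definition of $\tilde{S}_i^{\le 1}$ together with the fact that $\Mbar{g+1}/\Mbar{g}$ has étale normal-crossings singularities. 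Hence over $S_M$ the stable curve $X$ is itself regular, so no further resolution (as in \cref{lemma:resolving_singularities}) is needed.

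First I would identify both sides explicitly over $S_M$. On the one hand, by the last lemma of \cref{sec:caporaso} applied to $X/S_M$ (which is stable, $1$-strongly aligned, and smooth over a dense open), the restriction of $N_g$ to $S_M$ is the quotient of $\on{Pic}_{X/S_M}$ by the closure of the unit section, and the degree-$d$ part $N_{g,d}$ is the corresponding quotient of $\on{Pic}^d_{X/S_M}$ — this is the N\'eron model of $\on{Pic}^d$ of the generic fibre because $X/S_M$ is regular, by the main theorem of \cite{Holmes2014Neron-models-an}. On the other hand, I would invoke Caporaso's defining property of $\ca{P}_{d,g}$: for test morphisms from traits $B\ra\Mbar{g}$ whose associated stable curve is regular, $f^*\ca{P}_{d,g}$ is canonically the N\'eron model of $\on{Pic}^d$ of the generic fibre. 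The subtlety is that Caporaso states this for traits, whereas $S_M$ is higher-dimensional; so the real content is to upgrade the trait-by-trait statement to an isomorphism over $S_M$.

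To make that upgrade, I would argue as follows. Both $\tilde{\ca{P}}|_{S_M} = f^*\ca{P}_{d,g}$ and $N_{g,d}|_{S_M}$ are smooth separated schemes (resp. algebraic spaces) of finite type over the regular scheme $S_M$, and both restrict to $\on{Pic}^d_{X_U/U}$ over the dense open $U\subseteq S_M$; moreover $h$ restricts to the identity there. Since $N_{g,d}$ is a N\'eron model and $\tilde{\ca{P}}|_{S_M}$ is smooth over $S_M$, the N\'eron mapping property produces $h$ and also shows $h$ is the unique $S_M$-morphism extending the identity on $U$. To see $h$ is an isomorphism it suffices to exhibit an inverse; for this I would show $\tilde{\ca{P}}|_{S_M}$ \emph{also} satisfies the N\'eron mapping property over $S_M$ — equivalently, that the N\'eron model of $\on{Pic}^d$ over the regular base $S_M$ and Caporaso's $f^*\ca{P}_{d,g}$ agree as $S_M$-schemes. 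This is where I would use that both objects, being smooth and separated over the regular $S_M$ with the correct generic fibre, are determined by their restrictions to codimension-$\le 1$ points (a N\'eron model over a regular base is characterised by the valuative criterion at the codimension-$1$ points, and two smooth separated models agreeing there and at the generic point agree), combined precisely with Caporaso's trait-level identification applied to the local rings at the codimension-$1$ points of $S_M$. Concretely: for each height-$1$ prime $\frak{p}$ of $S_M$, spreading out gives a trait mapping to $\Mbar{g}$ with regular associated stable curve, so Caporaso gives $\tilde{\ca{P}}|_{\on{Spec}\ca{O}_{S_M,\frak{p}}}\cong N_{g,d}|_{\on{Spec}\ca{O}_{S_M,\frak{p}}}$ compatibly with $h$; since $h$ is a morphism of smooth separated $S_M$-spaces that is an isomorphism over $U$ and over every codimension-$1$ point, and $N_{g,d}$ is separated, a standard argument (the locus where $h$ is an isomorphism is open and contains all codimension-$\le 1$ points of the regular, hence reduced and locally factorial, $S_M$; combined with properness of $h$ on fibres coming from the N\'eron property it is all of $S_M$) forces $h$ to be an isomorphism. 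Finally, étale descent along $\bigsqcup_i\tilde{S}_i^{\le 1}\ra\Mtil{g}^{\le 1}$ gives the result.

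\textbf{Main obstacle.} The genuinely delicate point is the passage from Caporaso's trait-level universal property to an isomorphism over the positive-dimensional base $S_M$ — i.e. justifying that the two smooth separated $S_M$-models of $\on{Pic}^d_{X_U/U}$ that agree over $U$ and over all codimension-$1$ points must coincide. I would handle this by a careful codimension argument using the regularity (hence local factoriality) of $S_M$ established in \cref{thm:Stilde_regular}, the properness input built into the N\'eron mapping property, and separatedness of $N_{g,d}$; everything else (identifying $N_g|_{S_M}$ as a Picard quotient, constructing $h$, and the final étale descent) is routine given the lemmas already proved.
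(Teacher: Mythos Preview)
Your proposal correctly identifies the main obstacle --- upgrading Caporaso's trait-level statement to the higher-dimensional base --- but the codimension argument you propose to bridge it has a genuine gap. You assert that $h$ being an isomorphism over $U$ and over every codimension-$1$ point of $S_M$ forces it to be an isomorphism everywhere, invoking openness of the isomorphism locus together with an unspecified ``properness of $h$ on fibres coming from the N\'eron property''. But N\'eron models are not proper, there is no reason for $h$ itself to be proper, and an open subset of $S_M$ containing all codimension-$\le 1$ points need not be all of $S_M$: its complement can perfectly well be a nonempty closed subset of codimension $\ge 2$. As written, this step does not close.

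The paper takes a simpler route that sidesteps the difficulty entirely. Rather than applying Caporaso only at codimension-$1$ points and then trying to spread out, it observes that for \emph{every} geometric point $p$ of $\Mtil{g}^{\le 1}$ there is a trait $T$ mapping to $\Mbar{g}$, with generic point landing in $\M{g}$, factoring through $\Mtil{g}^{\le 1}$ with closed point sent to $p$, and such that the associated stable curve $X/T$ is regular. (This is exactly what $1$-strong alignment buys: the relevant labels form a weak normal-crossings system, each appearing with exponent $1$, so one can choose $T$ so that every label pulls back with valuation $1$.) Over such $T$, both $\tilde g^*\tilde{\ca{P}}$ and $\tilde g^*N_g$ are N\'eron models of $\on{Pic}^d$ of the generic fibre; the uniqueness part of the N\'eron mapping property then forces $\tilde g^*h$ to be the canonical isomorphism between them, so in particular $h_p$ is an isomorphism. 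Since both $\tilde{\ca{P}}$ and $N_g$ are smooth over $\Mtil{g}^{\le 1}$, the standard fibrewise criterion for isomorphisms finishes the proof. No \'etale-local reduction to the charts $S_M$ and no codimension bookkeeping is needed.
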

\begin{proof}
It is enough to check the map $h$ is an isomorphism on every geometric fibre over $\Mtil{g}^{\le 1}$. Let $p$ be a geometric point of $\Mtil{g}^{\le 1}$. Then there exists a trait $T$ with geometric closed point $t$, and a morphism $g\colon T \ra \Mbar{g}$ such that 
\begin{enumerate}
\item the given stable curve $X \ra T$ is regular;
\item the map $g$ factors via a map $\tilde{g} \colon T \ra \Mtil{g}^{\le 1}$;
\item this factorisation $\tilde{g}$ maps $t$ to $p$. 
\end{enumerate}

Now since $X$ is regular, we find that $\tilde{g}^*{N}_{g}$ is the N\'eron model of the jacobian of the generic fibre of $X \ra T$, and the same holds for $\tilde{g}^*\tilde{\ca{P}} = g^*\ca{P}$. In particular, this shows that the fibres of ${N}_{g}$ and of $\tilde{\ca{P}}$ over $p$ are isomorphic. Moreover, the given map between them is an isomorphism; this is true because it is so over the generic point of $T$ (apply the uniqueness part to the N\'eron mapping property). 
\end{proof}

In particular, this shows that, after pullback along the morphism $\Mtil{g}^{\le 1} \ra \Mbar{g}$, the stack $\ca{P}_{d,g}$ admits a natural torsor structure extending that over $\M{g}$.

\bibliographystyle{alpha} 
\bibliography{../../prebib.bib}

\end{document}